\pgfplotsset{compat=1.14}
\title{The Asynchronous DeGroot Dynamics}
\author{Dor Elboim}
\author{Yuval Peres}
\author{Ron Peretz}
\address{Dor Elboim, Princeton University, Princeton NJ, United States}
\email{delboim@princeton.edu}
\address{Yuval Peres, Beijing Institute of Mathematical Sciences and Applications, Beijing, China}
\email{yuval@yuvalperes.com}
\address{Ron Peretz, Bar Ilan University, Israel}
\email{ron.peretz@biu.ac.il}
\newtheorem{thm}{Theorem}[section]
\newtheorem{lem}[thm]{Lemma}  
\newtheorem{prop}[thm]{Proposition}
\newtheorem{cor}[thm]{Corollary}
\newtheorem{claim}[thm]{Claim}
\newtheorem{conjecture}[thm]{Conjecture}
\theoremstyle{definition}
\newtheorem{assumption}{Assumption}
\newtheorem{remark}[thm]{Remark}
\newtheorem{definition}[thm]{Definition}
\newcommand{\RR}{\mathbb{R}}
\newcommand{\PP}{\mathbb{P}}
\newcommand{\wP}{\widetilde{P}}
\newcommand{\EE}{\mathbb{E}}
\newcommand{\cF}{\mathcal{F}}
\newcommand{\cE}{\mathcal{E}}
\newcommand{\eps}{\varepsilon}
\newcommand{\var}{\mathrm{Var}}
\newcommand{\vpi}{\mathrm{var}_\pi}
\newcommand*\diff{\mathop{}\!\mathrm{d}}
\numberwithin{equation}{section}
\begin{document}
\maketitle
\begin{abstract}
    We analyze the asynchronous version of the DeGroot dynamics: In a connected graph $G$ with $n$ nodes,  each node has an initial opinions in $[0,1]$ and an independent Poisson clock. When a clock at a node $v$ rings,  the opinion at $v$ is replaced by the average opinion of its neighbors. It is well known that the opinions converge to a consensus.  We show that  the expected time $\EE(\tau_\eps)$ to reach $\eps$-consensus is poly$(n)$ in undirected graphs and in Eulerian digraphs, but for some digraphs of bounded degree it is exponential.  
    
     Our main result is that in undirected graphs and Eulerian digraphs, if the degrees are uniformly bounded and the initial opinions are i.i.d., then $\EE(\tau_\eps)=\text{polylog}(n)$ for every fixed $\eps>0$. We give sharp estimates for the variance of the limiting consensus opinion, which measures the ability to aggregate information (``wisdom of the crowd''). We also prove generalizations to non-reversible Markov chains and infinite graphs.   New results of independent interest on fragmentation processes and coupled random walks are crucial to our analysis.  
\end{abstract}

\section{Introduction}
The DeGroot dynamics \cite{degroot1974reaching} is arguably the most prominent model of non-Bayesian social learning. According to the classic DeGroot dynamics, agents synchronously update their opinions at discrete time periods assuming a weighted average opinion of their neighbors. We analyze a natural variant of the classic model, the asynchronous DeGroot dynamics, according to which  agents update their opinions at the rings of independent Poisson clocks.

The classic DeGroot model has been studied extensively (see  survey \cite{golublearning}).
It seems realistic to assume that not all agents update their opinion at the very same time, but rather do so at different random times. However, few quantitative results for the asynchronous dynamics have been proved.  

A fundamental difference between the asynchronous DeGroot dynamics and many other averaging dynamics, is that in the other dynamics the (weighted) average of the opinions never changes, whereas, in the asynchronous DeGroot dynamics such an invariant does not exist. This feature makes the analysis of the DeGroot dynamics harder. It also raises questions about the distribution of the consensus which do not arise  in the other models.

Most of the results regarding the classic model stem from its immediate relation to a well studied mathematical object, Markov chains. The analysis of the asynchronous counterpart is more subtle. In addition to Markov chain methods, we employ the connection to a   fragmentation process  of independent interest.

\subsection{Model and results}
Let $V$ be a finite or countable set of vertices. A \emph{network} consists of a stochastic matrix $P$ with rows and columns indexed by $V$.
The asynchronous DeGroot process associated with $P$ is defined as follows. Each vertex $v\in V$ holds an opinion $f_t(v)\in\mathbb R$ at every time $t\in \mathbb R_+$. At time $0$, the  initial opinions $\{f_0(v)\}_{v\in V}$ are either deterministic, or random. Each vertex $v$ is endowed with an independent  Poisson clock of rate $1$. 
%Vertices update their opinions whenever their clock rings. 
At a time $t$ in which the clock of vertex $v$ rings, its opinion is updated through the DeGroot updating rule
\begin{equation}\label{eq:degroot}
f_t(v):=\sum_{u\in V}P_{vu}f_{t-}(u),    
\end{equation}
where $f_{t-}(u):=\lim_{s\nearrow t}f_s(u)$. 

A special case of interest is when $P$ is the transition matrix of a simple random walk on a (locally) finite undirected graph $G=(V,E)$. 
 In this case, each time a vertex rings, it updates its opinion to be the average of the opinions of its neighbors. We refer to this dynamics as \emph{the DeGroot dynamics on $G$}.

The following known theorem\footnote{Theorem~\ref{theorem: consensus} is an immediate consequence of a more general result that is described in \cite[Proposition 9]{golublearning} and attributed to \cite{chatterjee1977towards}. It also follows from Theorem~\ref{genrate}, Part (i).} is the asynchronous version of 
DeGroot's classical result~\cite{degroot1974reaching}.

\begin{thm}\label{theorem: consensus}
If $V$ is finite and $P$ is irreducible,
 then there exists a random variable $f_\infty$ such that for every $v\in V$, we have $\lim_{t\to\infty}f_t(v)=f_\infty$ almost surely.
\end{thm}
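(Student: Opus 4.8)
The plan is to show that the \emph{spread} $D_t := \max_{v\in V} f_t(v) - \min_{v\in V} f_t(v)$ tends to $0$ almost surely; given this, the theorem follows easily. Whenever the clock of a vertex $v$ rings, $f_t(v)$ is replaced by the convex combination $\sum_u P_{vu} f_{t-}(u)$ (here we use that $P$ is stochastic), which lies in $[\min_u f_{t-}(u),\max_u f_{t-}(u)]$. Since a.s.\ there are only finitely many rings in every bounded interval and nothing changes between rings, $t\mapsto \max_v f_t(v)$ is non-increasing and $t\mapsto\min_v f_t(v)$ is non-decreasing; both are bounded (by $\max_v f_0(v)$ and $\min_v f_0(v)$, which are finite a.s.), hence they converge, say to $M_\infty$ and $m_\infty$, and $D_t$ itself is non-increasing, hence converges a.s.\ to some $D_\infty\ge 0$ with $D_\infty = M_\infty - m_\infty$. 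If $D_\infty=0$, set $f_\infty := M_\infty = m_\infty$; then $m_\infty \le \liminf_t f_t(v)\le \limsup_t f_t(v)\le M_\infty$ forces $f_t(v)\to f_\infty$ for every $v$. So it suffices to prove $D_t\to 0$ in probability.

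For this I would exploit the linearity of the dynamics via a \emph{backward (dual) random walk}. Let $M_w$ be the stochastic matrix equal to the identity except in row $w$, which equals the $w$-th row of $P$; one update at $w$ acts by $f\mapsto M_w f$. Hence if the rings in $[0,t]$ occur at $v_1,\dots,v_k$ in chronological order, then $f_t = M_{v_k}\cdots M_{v_1} f_0 =: Q_t f_0$, and $Q_t(v,\cdot)$ is the law of the chain started at $v$ that applies $M_{v_k},M_{v_{k-1}},\dots,M_{v_1}$ in that order --- i.e.\ a walker that reads the rings in \emph{reverse} chronological order and, each time the currently ringing vertex is its location, jumps according to $P$. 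By reversibility of the Poisson process, in this backward time the walker started at $v$ is a continuous-time Markov chain with generator $P-I$, which is irreducible since $P$ is. Running the walkers started at two vertices $v,v'$ against the \emph{same} rings couples them: while apart, exactly one of them moves at each relevant ring, and once they meet they coalesce forever. The pair-walk is then a finite continuous-time Markov chain on $V\times V$ whose diagonal is absorbing and reachable from every state --- from $(a,b)$ with $a\ne b$ one can steer the $B$-walker to $a$ while the $A$-walker stays at $a$, using irreducibility of $P$ (route $B$ to an in-neighbour $c$ of $a$, then $c\to a$). A finite Markov chain with an absorbing set reachable from every state is absorbed in a time with a uniform exponential tail, so $\PP(\text{the two walkers have not coalesced by backward time } t)\le C e^{-ct}$ with $C,c>0$ depending only on $P$.

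To conclude, write $f_t(v)-f_t(v') = \sum_u \bigl(Q_t(v,u)-Q_t(v',u)\bigr)\bigl(f_0(u)-c_0\bigr)$ for the midpoint $c_0$ of $[\min_u f_0(u),\max_u f_0(u)]$; since $|f_0(u)-c_0|\le D_0/2$ this gives $|f_t(v)-f_t(v')|\le D_0\,\|Q_t(v,\cdot)-Q_t(v',\cdot)\|_{\mathrm{TV}}$, and the coupling above bounds this total variation distance by $\PP(\text{not coalesced by time }t)\le C e^{-ct}$. As the clocks are independent of $f_0$, taking expectations yields $\EE[D_t\mid f_0]\le C\,e^{-ct}\,D_0$, so $D_t\to 0$ in probability, which combined with the monotonicity above finishes the proof. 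The step needing the most care is the coalescence of the two backward walkers: they are coupled through shared clocks and hence dependent, but precisely because only the walker at a ringing vertex moves, one can freeze one walker and drive the other onto it; verifying that the pair chain therefore reaches the diagonal (with a rate uniform over starting pairs) is the crux.
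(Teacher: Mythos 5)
Your argument is correct, and it takes a genuinely different route from the paper's. The paper does not prove this theorem directly: it cites a result attributed to Chatterjee--Seneta and also points to Theorem~\ref{genrate}(i), which is a spectral/energy argument showing that the empirical variance $\mathrm{var}_\pi(f_t)$ is (up to an exponential factor $e^{\widehat\gamma t}$) a supermartingale, with decay rate $\widehat\gamma=(1-\pi_{\min})\gamma_{P^*P}+2\pi_{\min}\gamma>0$ coming from the Dirichlet/variational principle. Your route is probabilistic duality plus coupling: reverse the ring times to get the backward random walk (this is exactly the fragmentation process $m^t_s(o,\cdot)$ from Section~\ref{sec: fragmentation}, and the ``shared-clock'' coupled chains of Section~\ref{sec:walks}), then observe that two such walkers never move simultaneously while apart, so one can freeze one walker at $a$ (by not ringing $a$) and steer the other along a shortest $P$-path from $b$ to $a$; since the diagonal is reachable from every pair with probability bounded away from zero over a bounded time window, absorption has a uniform exponential tail, which controls $\EE\|Q_t(v,\cdot)-Q_t(v',\cdot)\|_{\mathrm{TV}}$ and hence $\EE[D_t\mid f_0]$, and monotonicity of $D_t$ upgrades this to almost sure convergence. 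What the spectral route buys is a quantitative rate expressible through $\gamma$ and $\pi_{\min}$ (useful for the bounds in Section~\ref{sec:finite}); what your route buys is a proof needing only irreducibility and finiteness, no reversibility assumptions, no eigenvalue machinery, and a conceptually transparent coalescing-walk picture. One small thing worth flagging explicitly when you write this up: the ``coalesce forever once they meet'' property is a \emph{choice} of coupling (both walkers use the same transition coin when co-located), not a feature of the shared-clock chains the paper uses in Section~\ref{sec:walks}, whose trajectories remain independent and can separate again; your choice is legitimate since any coupling of the two conditional laws upper-bounds the total variation distance, but it deviates from the paper's convention and should be stated.
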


The focus of this paper is two fold:  studying the rate of convergence and the concentration of $f_\infty$ in finite graphs and generalizing  the convergence result  to infinite graphs.

When $V$ is finite, we have $\max\limits_{v\in V}f_t(v)\searrow f_\infty$ and $\min\limits_{v\in V}f_t(v)\nearrow f_\infty$. The $\eps $-consensus time of the dynamics is defined as the stopping time 
\[
\tau_\eps:=\min \Big\{ t:\max_{v\in V}f_t(v)-\min_{v\in V}f_t(v)\leq \eps \Big\}.
\]
In general, $\mathbb E[\tau_\eps]$ can be exponential in $|V|$. We find classes of stochastic matrices for which $\mathbb E[\tau_\eps]$ is polynomial in $|V|$, e.g., simple random walks on graphs.
\begin{thm}\label{cor:yuval}
Consider the DeGroot dynamics where the matrix $P$ represents a simple random walk on a connected graph $G$ with $n$ vertices and maximal degree $\Delta$. Suppose that $\Vert f_0\Vert_{\infty}\leq 1$. Then,
\begin{itemize}
\item[{\rm (i)}]  $\mathbb E [\tau_\eps]\le 4\Delta n^2 \lceil\log_2(1/\eps)\rceil$,
  \item[{\rm (ii)}] $\displaystyle \var(f_\infty) \le \Delta/|E|$.
  \end{itemize}
 If, in addition, $G$ is regular, then the factor $\Delta$ in  {\rm (i)} can be replaced by $3$.
\end{thm}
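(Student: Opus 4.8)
The plan is to track a single (random) node's opinion via a coupling with a random walk on $G$. Fix a vertex $v$ and consider the opinion $f_t(v)$. The key observation is that $f_t(v)$ can be represented as $f_0(X_t)$ for a suitable continuous-time process $X_t$ on $V$ that jumps \emph{backwards} along the edges used by the updates: when the Poisson clock at a node $w$ rings, every ``token'' currently sitting at $w$ is redistributed to a uniformly random neighbor of $w$ (this is the standard duality between the averaging dynamics and the random walk). More precisely, for each node one runs an independent random walk, but the walks are coupled so that they coalesce once they meet; then $f_t(v) = \mathbb{E}[f_0(X^v_t)\mid \text{clocks}]$ where $X^v$ is the walk started at $v$. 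In the reversible (undirected) case $X^v$ is a simple random walk on $G$ run at rate $1$ (the holding rate being $1$ at every vertex, not $\deg$), so its stationary distribution is $\pi(u) = \deg(u)/(2|E|)$.

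\textbf{Step 1: bound the gap by a mixing quantity.} The difference $\max_v f_t(v) - \min_v f_t(v)$ is controlled by how far the law of $X^v_t$ is from its ``equilibrium between coupled walks'', i.e.\ by the probability that two coupled copies $X^u_t, X^w_t$ started at different vertices have not yet coalesced. So I would write
\[
\max_{u,w} \bigl| f_t(u) - f_t(w) \bigr| \;\le\; 2 \|f_0\|_\infty \, \max_{u,w} \mathbb{P}\bigl(X^u_t \neq X^w_t\bigr),
\]
and hence $\tau_\eps$ is dominated by the coalescence/meeting time of two coupled lazy-type walks. The standard route is then to bound this meeting time by the relaxation (or $L^2$-mixing) time of the single walk: $\mathbb{P}(X^u_t \neq X^w_t)$ decays like $e^{-t/t_{\mathrm{rel}}}$ up to a combinatorial prefactor coming from the spectral gap and the minimal stationary mass $\pi_{\min} \ge 1/(2|E|)$.

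\textbf{Step 2: plug in spectral-gap bounds.} For a continuous-time simple random walk on a connected graph on $n$ vertices with maximal degree $\Delta$, the spectral gap $\gamma$ satisfies $\gamma \ge c/(\Delta n^2)$ (e.g.\ via the canonical-path / Cheeger bound, using that the edge conductances are all $\Theta(1/\Delta)$ after the rate normalization; for vertex-transitive or regular graphs one gets $\gamma \ge c/n^2$ directly, which is where the improvement for regular $G$ comes from). Combining with Step 1, the not-yet-coalesced probability is at most $\eps/(2\|f_0\|_\infty)$ once $t \ge C \Delta n^2 \log(|E|/\eps) = C'\Delta n^2\log(1/\eps)$ (absorbing $\log|E| = O(\log n)$ and, if one is careful, into the constant using $\log n \le n$), giving part (i).

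\textbf{Step 3: variance of the limit.} For part (ii), note $f_\infty = \lim_t f_t(v)$, and by the coupling representation $f_t(v) = \sum_u \mu^v_t(u) f_0(u)$ for a random probability vector $\mu^v_t$ (the law of $X^v_t$ given the clocks), with $\mu^v_t(u) \to \pi(u)$ in probability componentwise — more to the point, the \emph{empirical} weights converge so that $f_\infty = \sum_u W(u) f_0(u)$ for a random probability vector $W$ with $\mathbb{E}[W(u)] = \pi(u) = \deg(u)/(2|E|)$. Then, since the $f_0(u)$ are fixed with $\|f_0\|_\infty \le 1$ and $f_\infty$ is a convex combination of them with $f_\infty \in [\min f_0, \max f_0] \subseteq [-1,1]$, one has $\var(f_\infty) \le \mathbb{E}[f_\infty^2] \le \sum_u \mathbb{E}[W(u)]\,\mathbb{E}[f_0(u)^2]$ — but that only gives $\le 1$. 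The sharper bound $\var(f_\infty)\le \Delta/|E|$ must instead come from a second-moment/variance estimate on $W$: writing $f_\infty - \mathbb{E}f_\infty$ and using a martingale (the process $t\mapsto f_{t}(\text{stationary-started walk})$, or equivalently $\sum_u \pi(u) f_t(u)$ is \emph{not} a martingale here — that is exactly the point of the introduction — but $\sum_u \mu^v_t(u) f_0(u)$ composed appropriately is), one shows $\var(f_\infty) \le \|f_0\|_\infty^2 \cdot \max_u \pi(u) \cdot (\text{const}) \le \Delta/|E|$, the factor $\max_u \pi(u) \le \Delta/(2|E|)$ being the source of $\Delta/|E|$.

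\textbf{Main obstacle.} The hard part is Step 3, the sharp variance bound, precisely because (as the introduction stresses) there is \emph{no} conserved weighted average: $\sum_u \pi(u) f_t(u)$ drifts, so $f_\infty$ is not simply $\sum_u \pi(u)f_0(u)$ and one cannot read off its variance from a martingale at time $0$. The resolution I would pursue is to identify the right martingale in the coupled-random-walk picture — track the pair $(X^u_t, X^w_t)$ and the associated ``disagreement'' functional, or use the dual description where $f_\infty$ is the $f_0$-weighted limiting frequency of a single coalescing-friendly walk — and then bound its quadratic variation by the stationary measure's $\ell^\infty$ norm $\max_u\pi(u)=\Delta/(2|E|)$ together with $\|f_0\|_\infty\le1$. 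Getting the constant and the correct power of $\Delta$ (i.e.\ $\Delta^1$, not $\Delta^2$) will require care about the rate normalization (clocks at rate $1$ per vertex versus rate $\deg$ per edge).
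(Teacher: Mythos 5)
Your high-level picture — representing $f_t(v)$ via a backward random walk using the same clocks, and measuring disagreement through coupled walks — is sound and corresponds to the fragmentation process the paper develops in Section~\ref{sec: fragmentation}. But the paper does \emph{not} prove Theorem~\ref{cor:yuval} that way; it uses a direct drift/supermartingale analysis (Section~\ref{sec:finite}), and your route has two concrete gaps that the paper's route is specifically engineered to avoid.

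\textbf{Gap in Step 2 (the $\log n$ overhead).} The spectral-gap route you describe is exactly Theorem~\ref{rate}(a): it gives $\mathbb{E}[\tau_\eps]\le \gamma^{-1}\log(2e/(\gamma\pi_{\min}\eps^2))$. Plugging $\gamma\ge c/(\Delta n^2)$ and $\pi_{\min}\ge 1/(2|E|)$ yields $\Delta n^2\bigl(O(\log n)+\log(1/\eps)\bigr)$, and the $\log n$ cannot be ``absorbed into the constant'' in a bound of the form $C\Delta n^2\log(1/\eps)$: for $\eps$ of constant size, $\Delta n^2\log n$ is genuinely larger. (The remark ``$\log n\le n$'' would only buy you $\Delta n^3$.) The paper gets rid of the extra $\log n$ with a different mechanism, Theorem~\ref{rate}(b): it tracks the energy $\mathcal E_P(f_t)$, shows via a Cauchy--Schwarz step that its drift is at most $-4\mathcal E_t^2$ when $\|f_t\|_\infty\le 1/2$, exponentiates $\psi(\mathcal E_t)=e^{-1/\mathcal E_t}$ to produce a supermartingale, and then iterates a constant-factor oscillation reduction $\lceil\log_2(1/\eps)\rceil$ times. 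The bound $\bigl(\mathrm{diam}(G)\cdot|E|+1\bigr)\lceil\log_2(1/\eps)\rceil$ gives both the stated $\Delta n^2\log(1/\eps)$ and, for regular $G$, the removal of the $\Delta$ factor via $\mathrm{diam}(G)\le 3n/\Delta$ (your ``$\gamma\ge c/n^2$ for regular graphs'' alternative would still carry the $\log n$).

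\textbf{Gap in Step 3 (variance).} You correctly diagnose the obstacle — the empirical average $M_t=E_\pi(f_t)$ drifts, so $\var(f_\infty)$ is not readable off the initial profile — but you stop at ``the resolution I would pursue,'' which is not a proof. The paper's resolution (Corollary~\ref{thm:varcons1}) is to combine the two drift computations from Lemma~\ref{lemdrift} and \eqref{energydrift}: $D(M_t^2)\le\pi_{\max}\|(I-P)f_t\|_\pi^2$ and $D(\mathcal E_P(f_t))=-\|(I-P)f_t\|_\pi^2$, which cancel to make $M_t^2+\pi_{\max}\mathcal E_P(f_t)$ a supermartingale. Since $M_\infty=f_\infty$, one gets $\var(f_\infty)\le\pi_{\max}\mathcal E_P(f_0)$, and then $\mathcal E_P(f_0)\le 2$ (from $\|f_0\|_\infty\le1$) together with $\pi_{\max}=\Delta/(2|E|)$ gives the stated $\Delta/|E|$. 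Your coalescing-walk description also has a smaller technical wrinkle: in the shared-clocks duality two walks at distinct sites jump independently and do not automatically coalesce on meeting, so the coupling must be chosen carefully, and your Step~1 inequality bounds $\mathbb E[\mathrm{osc}(f_t)]$ rather than $\mathrm{osc}(f_t)$ a.s., which costs an extra iteration step you haven't written down.

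In short: your framework is the Section~\ref{sec: fragmentation} machinery, which the paper uses for the i.i.d.-initial-opinions results, but for Theorem~\ref{cor:yuval} the paper uses the more elementary drift calculus, and the specific supermartingales ($e^{4t}e^{-1/\mathcal E_t}$ for part (i), $M_t^2+\pi_{\max}\mathcal E_P(f_t)$ for part (ii)) are precisely the ingredients your proposal is missing.
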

In Section~\ref{sec:finite} we prove refinements of these bounds that take into account the diameter of $G$,  its eigenvalues, and the initial configuration. Specifically,  Part (i) of Theorem~\ref{cor:yuval} follows from Theorem \ref{rate}, part (ii) from Corollary \ref{thm:varcons1}, and the last sentence from Remark (1) after Theorem~\ref{rate}.

If the graph $G$ is directed, the time to consensus
can be exponential (See Claim~\ref{claim:exp}), but when $G$ is a directed Eulerian graph (indegree equals outdegree at each vertex) we prove a polynomial upper bound in Corollary~\ref{cor:euler}.

Our main contribution is showing that when the initial opinions are i.i.d., rather than arbitrary, and the graph has a bounded degree, convergence to consensus is much faster if $n$ is large enough.
We give a polylogarithmic upper bound for the consensus time in this setting. 
\begin{thm}\label{cor:polylog}
Let $G$ be either a connected undirected graph or an Eulerian directed graph. Suppose that $G$ has $n$ 
 vertices and maximal degree $\Delta$. Consider the DeGroot dynamics on $G$ with initial opinions being $[0,1]$-valued i.i.d.\ random variables. Then, there exists a universal constant $C>0$ such that for any $\eps >0$
\[
\mathbb E[\tau_\eps]\leq (\eps^{-1} \log n)^C,
\]
as long as  $n$ is sufficiently large depending on $\Delta$.
\end{thm}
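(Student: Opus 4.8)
The plan is to analyze the dynamics through its adjoint (``backward'') representation. Condition on the whole realization of the Poisson clocks. Unfolding the updating rule~\eqref{eq:degroot} by reading the finitely many ring times in $[0,t]$ from $t$ down to $0$ produces, for each vertex $v$, a probability vector $W_t(v,\cdot)$ on $V$ with
\[
f_t(v)=\sum_{u\in V}W_t(v,u)\,f_0(u);
\]
$W_t(v,\cdot)$ is the endpoint law of the walk started at $v$ at time $t$ that, moving backward in time, jumps to a uniformly random neighbour of its current location at each ring of that location, and a short check shows this walk is just a continuous-time rate-$1$ simple random walk on $G$. Since the $f_0(u)$ are i.i.d., $[0,1]$-valued with mean $\mu:=\EE f_0(u)$, and independent of the clocks, conditioning on the clocks makes $f_t(v)-\mu=\sum_u W_t(v,u)\,(f_0(u)-\mu)$ a weighted sum of independent centred variables bounded by $1$. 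A standard moment estimate for such sums yields, for all $v$ and $k\ge1$,
\[
\EE\bigl[(f_t(v)-\mu)^{2k}\bigr]\ \le\ (Ck)^{k}\,\EE\bigl[\,\|W_t(v,\cdot)\|_2^{2k}\,\bigr]\ =\ (Ck)^{k}\,\PP\Bigl(\bigcap_{i=1}^{k}\{X^{(2i-1)}_t=X^{(2i)}_t\}\Bigr),
\]
where $X^{(1)},\dots,X^{(2k)}$ are $2k$ backward walks started at $v$ that are driven by the \emph{same} Poisson clocks — so they jump simultaneously, to independently chosen neighbours, whenever they occupy a common vertex — and otherwise move independently. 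Everything is thereby reduced to showing that these coupled walks spread out: for a polylogarithmic time $t_0$ the ``$k$-fold pairing probability'' on the right should be at most $(c_1\eps^2/\log n)^{k}$.

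Proving this is the heart of the matter, and is where the new results on fragmentation processes and coupled random walks enter. The heuristic is that $G$, being connected, is ``at least one--dimensional'': every $S$ with $\emptyset\neq S\subsetneq V$ has a non-empty edge boundary, so $|\partial S|/|S|\ge 1/|S|$; this one--dimensional isoperimetric lower bound forces the difference of two of the walks out of every small set and produces a $t^{-1/2}$-type decay of the two-walk meeting probability $\PP(X^{(1)}_t=X^{(2)}_t)$. One needs, in addition, that the $k$ distinct meeting events do not reinforce one another — an approximate submultiplicativity of the pairing probability — which is established by following how the family of coupled walks fragments into matched pairs and singletons. The main obstacle is to make all of this uniform over all connected graphs of maximal degree $\Delta$: the coupling is delicate (two walks are forced to move in lock-step exactly when they coincide), and one must rule out degenerate bounded-size gadgets in which two walks are repeatedly pushed back together — which is precisely the role of the hypothesis that $n$ be large in terms of $\Delta$.

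Granting the key estimate, put $k:=\lceil 10\log_2 n\rceil$ and take $t_0=(\eps^{-1}\log n)^{C}$ to be a sufficiently large power so that $\EE\|W_{t_0}(v,\cdot)\|_2^{2k}\le (c_1\eps^2/\log n)^{k}$ for a small constant $c_1$ (the dependence on $\Delta$ is absorbed into the requirement that $n$ be large). Then $\EE[(f_{t_0}(v)-\mu)^{2k}]\le (Ck\,c_1\eps^2/\log n)^{k}\le (\eps/3)^{2k}n^{-6}$, so by Markov's inequality and a union bound over the $n$ vertices,
\[
\PP(\tau_\eps>t_0)\ \le\ \PP\Bigl(\max_{v}|f_{t_0}(v)-\mu|>\tfrac{\eps}{3}\Bigr)\ \le\ n^{-5}.
\]
Finally the opinions always stay in $[0,1]$, so by the Markov property of the dynamics and Theorem~\ref{cor:yuval}(i) the consensus time has a geometric tail from any configuration — $\PP(\tau_\eps>r)\le 2^{-\lfloor r/A\rfloor}$ with $A:=2C\Delta n^2\log(1/\eps)$ — whence $\PP(\tau_\eps>t_0+r)\le n^{-5}2^{-\lfloor r/A\rfloor}$ and
\[
\EE[\tau_\eps]\ \le\ t_0+n^{-5}\!\int_{0}^{\infty}\! 2^{-\lfloor r/A\rfloor}\,\mathrm{d}r\ =\ t_0+O(n^{-5}A)\ =\ t_0+o(1)\ =\ (\eps^{-1}\log n)^{O(1)}.
\]
I expect the only genuinely hard step to be the key estimate on the coupled backward walks — both the two-walk decay and, above all, its $k$-fold form; the adjoint representation, the moment inequality and the restart argument are routine.
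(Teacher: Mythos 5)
Your skeleton matches the paper's: the backward (fragmentation) representation $f_t(v)=\sum_u W_t(v,u)f_0(u)$, the identification of $\EE\|W_t(v,\cdot)\|_2^{2k}$ with meeting probabilities of walks driven by shared clocks (this is exactly Claim~\ref{claim:1}), a concentration step, a union bound over vertices, and the restart argument via Theorem~\ref{cor:yuval} (that restart is what the paper does in the remark following Theorem~\ref{thm: convergence infinite network}, including the caveat --- which you don't flag --- that for $\eps\lesssim n^{-c}$ one must fall back on Theorem~\ref{cor:yuval} directly, since the fragmentation estimates are only valid for $t\lesssim n^2$). You also reduced correctly: conditioning on the clocks and applying a sub-Gaussian moment bound to $\sum_u W_t(v,u)(f_0(u)-\mu)$ is sound, and the equality $\EE\|W_t(v,\cdot)\|_2^{2k}=\PP\bigl(\bigcap_{i=1}^k\{X^{(2i-1)}_t=X^{(2i)}_t\}\bigr)$ is correct.

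Where you diverge is in the concentration step and in the ``hard estimate.'' The paper does not bound $\EE\bigl[(\sum_v m_t(v)^2)^k\bigr]$ directly; it instead bounds the single-vertex $d$-th moment $\EE\bigl[\sum_v m_t(v)^d\bigr]\le t^{-\alpha d/10}$ (Proposition~\ref{prop:fragmentation infinite-graph d-th moment}), uses Markov to conclude that $\max_v m_t(v)$ is small with overwhelming probability, and then applies Azuma's inequality \emph{conditionally on the clocks}. Your version and the paper's are essentially equivalent in strength: since $\sum_v m_t(v)=1$, one has $\bigl(\sum_v m_t(v)^2\bigr)^k\le\bigl(\max_v m_t(v)\bigr)^k\le\sum_v m_t(v)^k$, so the $k$-fold pairing probability you need is in fact dominated by the paper's Proposition~\ref{prop:fragmentation infinite-graph d-th moment} with $d=k$; you would not need a separate ``approximate submultiplicativity'' lemma if you route the estimate this way. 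The genuine gap in your sketch is the two-walk decay $\PP(X^{(1)}_t=X^{(2)}_t)\lesssim t^{-\alpha}$ under shared clocks (Proposition~\ref{prop:fragmentation infinite-graph second moment}). The isoperimetric heuristic you offer ($|\partial S|/|S|\ge 1/|S|$ gives $t^{-1/2}$ heat-kernel decay) is what justifies the paper's \emph{input} Assumption~\ref{assumption 1/sqrt t} (Nash--Varopoulos, for a \emph{single} walk), but it does not by itself address the clock-sharing coupling: the two walks are not a simple random walk on a product graph, and when they coincide they are forced to step together. The paper's actual proof instead tracks $N_1-N_2$ (the difference of step counts), shows it has increments with variance bounded below whenever the walks are apart (which happens a positive fraction of the time, by Lemmas~\ref{lem: good k steps}--\ref{lem: any c}), and invokes the Gurel-Gurevich--Peled martingale anticoncentration bound (Corollary~\ref{cor: bounded variance martingale}); this is a substantively different and non-obvious ingredient that a pure isoperimetry argument will not replace.
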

Without the assumption of bounded degree, the consensus time can be polynomial in $n$ (see Claim~\ref{claim:n^2}). 
Theorem~\ref{cor:polylog} follows from a more general result, Theorem~\ref{thm:polylog}, and Remark~\ref{remark: polylog expectation}. The polylogarithmic upper bound given in Theorem~\ref{cor:polylog} cannot be improved in general. Indeed,  later in Claim~\ref{claim:lower bound}, we show that on the cycle graph, $\mathbb E[\tau _{\eps }]\ge C\eps ^{-4} \log ^2 n$ for any $\eps \ge n^{-1/9}$.
 
Theorem~\ref{theorem: consensus} does not hold for infinite networks as stated. Claim~\ref{claim:initial} shows that for certain initial opinions convergence may fail. The following theorem provides a class of infinite networks and initial opinions to which Theorem~\ref{theorem: consensus} does extend. A more general result is given in Theorem~\ref{thm: convergence infinite network}.

\begin{thm}\label{cor: infinite graph}
Consider the DeGroot dynamics on an infinite, connected graph of bounded degree and suppose that the initial opinions are bounded i.i.d.\ random variables with expectation $\mu $. Then, for every $v\in V$
\[
\lim_{t\to\infty}f_t(v)=\mu\quad \quad\text{almost surely.}
\]
\end{thm}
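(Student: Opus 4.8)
\emph{Strategy: the backward--walk representation.} The construction behind Definition~\ref{def: degroot} realizes the dynamics as a random averaging,
\[
f_t(v)=\sum_{u\in V}W_t(v,u)\,f_0(u),
\]
where $W_t(v,\cdot)$ is a random probability measure on $V$ --- measurable with respect to the Poisson clocks $\mathcal C$, hence independent of $\{f_0(u)\}_u$ --- whose annealed law is that of the continuous-time simple random walk, i.e.\ $\EE[W_t(v,u)]=p_t(v,u)$. (It is the total weight of the tree of updates rooted at $(v,t)$ whose leaves lie at $u$; bounded degree makes the associated backward walk non-explosive, so the tree is a.s.\ finite and $\sum_uW_t(v,u)=1$.) Writing $g_0:=f_0-\mu$, the $g_0(u)$ are i.i.d., mean zero and bounded by some $M$, and since the weights sum to $1$ we get $f_t(v)-\mu=\sum_uW_t(v,u)g_0(u)$; so it suffices to prove this last sum tends to $0$ almost surely.

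\emph{Reduction to a collision probability.} Put $Q_t:=\|W_t(v,\cdot)\|_2^2=\sum_uW_t(v,u)^2\le1$. Conditionally on $\mathcal C$, $f_t(v)-\mu$ is a weighted sum of independent mean-zero variables bounded by $M$ with $\ell^2$-weights of norm $\sqrt{Q_t}$; Rosenthal's inequality together with the elementary bound $\sum_uW_t(v,u)^{2p}\le\big(\sum_uW_t(v,u)^2\big)^p$ gives, for each integer $p\ge1$, $\EE[(f_t(v)-\mu)^{2p}\mid\mathcal C]\le C_pM^{2p}Q_t^{\,p}$, hence $\EE[(f_t(v)-\mu)^{2p}]\le C_pM^{2p}\EE[Q_t^{\,p}]$. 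Expanding $Q_t^{\,p}$ and interpreting each product as a landing probability for $2p$ conditionally independent backward walks from $(v,t)$,
\[
\EE[Q_t^{\,p}]=\PP\big(Y^1_t=Y^2_t,\ Y^3_t=Y^4_t,\ \dots,\ Y^{2p-1}_t=Y^{2p}_t\big),
\]
where $Y^1,\dots,Y^{2p}$ are walks started at $v$ driven by the \emph{same} clocks but with independent choices of neighbour at each jump --- a family of coupled continuous-time random walks (for $p=1$ this is simply the probability that two coupled backward walks from $v$ collide at time $t$).

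\emph{The coupled--walk estimate --- the crux.} Everything now reduces to two quantitative facts about these coupled walks, uniform in the starting vertex: (a) $\eta(t):=\sup_{w}\PP_w(Y^1_t=Y^2_t)=O(t^{-\alpha})$ for some $\alpha=\alpha(\Delta)>0$; and (b) distinct pairs asymptotically decouple, $\sup_w\PP_w(\text{all }p\text{ pairs coincide at }t)\le(C_p\,\eta(t))^{\,p}$. For (a): on an infinite connected bounded-degree graph a single backward walk performs $\asymp t$ steps by time $t$, and a simple random walk run for $\asymp t$ steps has $\ell^\infty$-mass $O(t^{-\alpha})$ by standard heat-kernel upper bounds (the volume grows at least linearly, so $p^{\mathrm{SRW}}_{2k}(v,v)=O(k^{-1/2})$); moreover two coupled walks evolve independently while apart and, from any common vertex of degree $\ge2$, separate at the next jump with probability $\ge1-1/\Delta$, which suffices to pin the pair-collision probability to that rate. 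Claim (b) --- asymptotic independence of the pairs --- is the genuinely new input, of exactly the type the paper develops about coupled random walks, and I expect it (rather than anything before or after) to be the main obstacle. Granting (a) and (b), $\sup_w\EE_w[Q_t^{\,p}]\le(C_p')^{\,p}t^{-p\alpha}$.

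\emph{From moments to almost--sure convergence.} Fix $v$ and $\eps>0$ and choose $p$ with $p\alpha>\log\Delta+2$. The $2p$-th-moment Markov inequality and a union bound over $B(v,\lceil\log n\rceil)$ (which has $\le\Delta^{\lceil\log n\rceil}=n^{O(1)}$ vertices) give
\[
\PP\big(\exists\,w\in B(v,\lceil\log n\rceil):\ |f_n(w)-\mu|>\eps\big)\ \le\ \Delta^{\lceil\log n\rceil}\cdot\frac{C_pM^{2p}\sup_w\EE_w[Q_n^{\,p}]}{\eps^{2p}}\ \le\ \frac{(C'')^{\,p}}{\eps^{2p}}\,n^{\log\Delta-p\alpha},
\]
which is summable in $n$; by Borel--Cantelli, almost surely $|f_n(w)-\mu|\le\eps$ for all large $n$ and all $w\in B(v,\lceil\log n\rceil)$. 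The unit gaps are routine: for $t\in[n,n+1]$ one has $f_t(v)-\mu=\sum_w\widetilde W_{[n,t]}(v,w)(f_n(w)-\mu)$ with $\widetilde W_{[n,t]}$ the weights of the dynamics over the window $[n,t]$, and over a window of length $1$ the backward walk stays inside $B(v,\lceil\log n\rceil)$ except with probability $\le\PP(\mathrm{Poisson}(1)\ge\lceil\log n\rceil)\le1/\lceil\log n\rceil!$; combining the two estimates via a further Borel--Cantelli step yields $\limsup_{t\to\infty}|f_t(v)-\mu|\le2\eps$ a.s.\ for every $\eps>0$, hence $f_t(v)\to\mu$ almost surely.
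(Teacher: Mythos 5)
Your strategic outline matches the paper's: you use the backward-walk (fragmentation) representation, reduce to moment estimates of $Q_t=\sum_u W_t(v,u)^2$, and pass from those moments to almost-sure convergence via Borel--Cantelli; the paper does essentially the same (it writes $f_t(o)=\sum_v m_t(v)f_0(v)$, proves Propositions~\ref{prop:fragmentation infinite-graph second moment} and~\ref{prop:fragmentation infinite-graph d-th moment} about $\sum_v m_t(v)^2$ and $\sum_v m_t(v)^d$, and then applies Azuma plus Borel--Cantelli in Lemma~\ref{lem: concentration around mu} and Theorem~\ref{thm: convergence infinite network}). The concentration step and the handling of continuous time are done a bit differently (you use Rosenthal and a ball-union bound; the paper uses Azuma and a conditional Borel--Cantelli argument), but these differences are routine and both routes are fine.

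The genuine gap is in your ``crux'' step~(a), which you treat as if it were essentially standard. You assert $\sup_w\PP_w(Y^1_t=Y^2_t)=O(t^{-\alpha})$ by noting that each walk takes $\asymp t$ steps, that a single discrete-time walk has $\ell^\infty$-mass $O(k^{-1/2})$ after $k$ steps, and that coupled walks separate from a common vertex with probability $\ge 1-1/\Delta$. But the last observation does not ``pin the pair-collision probability to that rate.'' The whole difficulty is that the two walks share the same Poisson clocks, so the \emph{number of steps} $N_1(t)$ and $N_2(t)$ taken by each walk up to time $t$ are strongly correlated, and the rate of the sum $N_1+N_2$ changes depending on whether the walks are together or apart; the single-walk heat-kernel bound simply does not transfer. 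The paper's proof of Proposition~\ref{prop:fragmentation infinite-graph second moment} has to condition on the full trajectories and then establish a genuine anti-concentration statement for the martingale $S(n)=N_1(t_n)-N_2(t_n)$ (via Corollary~\ref{cor: bounded variance martingale}, imported from \cite{gurel2014localization}), preceded by Lemmas~\ref{lem: good k steps}--\ref{lem: any c} showing that the walks spend a positive fraction of steps apart with high probability so that the martingale has a variance lower bound. That machinery is the bulk of the paper's work and is not captured by your heuristic. Your point~(b) is also asserted without proof, but there you at least flag it as the open input; in the paper that is Proposition~\ref{prop:fragmentation infinite-graph d-th moment}, which is again nontrivial (it requires decoupling the Poisson clocks via a Freedman-inequality comparison) but is arguably more routine than~(a). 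So the proposal is a correct reduction, but the central quantitative estimates it relies on --- the coupled-walk collision bound and its $p$-fold analogue --- are not proved, and the sketch of~(a) in particular would not survive being written out.
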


The assumption of a bounded degree cannot be removed. Claim~\ref{claim:unbounded} provides an example of a graph of unbounded degree on which the opinions do not converge, even when the initial opinions are bounded i.i.d.\ random variables. 

\begin{remark}
On finite networks, the DeGroot dynamics is well defined through the updating rule \eqref{eq:degroot}. On infinite networks, a more subtle definition is required which is given in Definition~\ref{def: degroot}.
\end{remark}

\subsection{Related work}
Several asynchronous averaging dynamics have been studied, e.g., dynamics in which an edge is chosen randomly and then the two vertices on that edge update their opinion to be the average of the two.  Boyd,  Ghosh,    Prabhakar,  and Shah~\cite{boyd2006randomized}  analyzed such models in the settings of gossip algorithms    and proved spectral bounds for convergence times; these bounds are polynomial in $n$ when the edge is chosen uniformly.

Deffuant, Neau, Amblard and Weisbuch \cite{deffuant2000mixing} proposed a model similar to that of Boyd et al except that the vertices update their opinion only if the the distance between the opinions is smaller than a certain fixed parameter. Deffuant et al showed that the opinions converge to several clusters. In each cluster there is consensus, but the clusters are so far in their opinions that they do not exchange any information. 

Kempe, Dobra,   and Gehrke,\cite{kempe2003gossip} 
use gossip algorithms to aggregate information.
Other gossip algorithms were analyzed in  \cite{hedetniemi1988survey,kempe2002protocols}.  
In the voter model, the opinions are in $\{0,1\}$. In each step, a random vertex selects a random neighbour and adopts its opinion.  See, e.g., \cite{sood2005voter,cox1986diffusive,castellano2003incomplete,holley1975ergodic}. Chazelle~\cite{chazelle} relates opinion dynamics to a fragmentation process.
 %gossip algorithm is defined as follows. Each vertex has an opinion in $\mathbb R$. In the synchronous model, at each time step, each vertex chooses a neighbour at random and the two vertices adopt the average of their opinions. The asynchronous version of the gossip algorithms was studied in \cite{boyd2006randomized}. In \cite{boyd200The6randomized} the authors proved a result analogous to Theorem~\ref{rate}. We conjecture that a result similar to Theorem~\ref{cor:polylog} should hold also for the asynchronous gossip algorithms, however, our methods do not extend to this setting.

%
\section{Convergence Rate on Finite Graphs} \label{sec:finite}
The goal of this section is to estimate the rate of convergence of asynchronous DeGroot dynamics from arbitrary initial opinions to consensus on finite graphs.
%Given an undirected connected graph $G=(V,E)$ with $|V|=n$, we write $p_{vw}=\frac{1}{d_v}1_{w\in N(v)}$, where $N(v)$ is the set of neighbors of $v$ and $d_v=|N(v)|$. Thus $P=(p_{vw})_{v,w\in V}$ is the transition matrix of simple random walk on $G$.

%Recall that in the asynchronous DeGroot dynamics, the nodes in $V$ have some initial opinions $\{f_0(v)\}_{v \in V}$ and their opinions at time $t$, denoted $\{f_t(v)\}_{v \in V}$ evolve as follows. The nodes are equipped with independent Poisson clocks of rate one; If the clock at $v$ rings at time $t$, then $f_t(v)$ is replaced by the  average opinion of other nodes, weighted by row $v$ of $P$, i.e.,
%\[f_t(v)=\sum_{w \in V}P_{vw}f_{t-}(w).\]

Every irreducible  transition matrix $P$  has a unique stationary distribution $\pi$: a row vector satisfying $\pi P=\pi$. The chain is called \emph{reversible} if $$\forall  v,w \in V, \quad \pi(v) P_{vw}=\pi(w)P_{wv} \,.$$ 
In that case, all its eigenvalues are in $[-1,1]$. If $1=\lambda_1>\lambda_2$ are the two top eigenvalues of $P$, then $\gamma:=1-\lambda_2$ is called the {\em spectral gap\/} of $P$.
Let $\pi_{\min}$ denote the minimal element of $\pi$. If $P$ corresponds to a simple random walk (SRW) on a graph $G=(V,E)$, then $\pi_{\min} \ge \frac1{2|E|}$.
          
For any function $f:V \to {\mathbb R}$ we define its {\em oscillation}   
by $${\rm osc}(f):=\max_{v \in V} f(v) -\min_{w \in V} f(w)\,.$$
The next theorem bounds the convergence time to consensus for reversible chains and SRW on graphs. 
\begin{thm}\label{rate}
Let $P$ be reversible and let $\gamma$ denote its spectral gap. Suppose  ${\rm osc}(f_0) \le 1$. Consider the stopping time 
$$\tau_\eps=\tau_\eps (f_0):=\min\{ t \ge 0: \,{\rm osc}(f_t) \le \eps \} \,.
$$
Then, for $0<\eps <1$, we have
\begin{itemize} 
\item[(a)] $\displaystyle {\mathbb E}(\tau_\eps ) \le  \frac{1}{\gamma}\log\Bigl(\frac{2e}{\gamma  \cdot \pi_{\min} \eps^2}\Bigr)  $.
\item[(b)] If $P$ represents a simple random walk on the graph $G=(V,E)$, then   
\begin{equation}
 \displaystyle {\mathbb E}(\tau_\eps ) \le 8{\rm diam}(G) \cdot |E| \cdot \lceil \log_2(1/\eps ) \rceil \,.   
\end{equation}
\end{itemize}
\end{thm}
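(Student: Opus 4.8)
The common starting point is the monotonicity already noted in the excerpt: $\max_v f_t(v)\searrow f_\infty$ and $\min_v f_t(v)\nearrow f_\infty$, so ${\rm osc}(f_t)$ is non-increasing in $t$ and hence
\[
\EE(\tau_\eps)=\int_0^\infty \PP\big({\rm osc}(f_t)>\eps\big)\,dt .
\]
To control the law of ${\rm osc}(f_t)$ I would use a single Lyapunov functional, the Dirichlet energy $\cE(f):=\langle f,(I-P)f\rangle_\pi=\tfrac12\sum_{v,w}\pi(v)P_{vw}(f(v)-f(w))^2$, which satisfies $\cE(f_0)\le\tfrac12\,{\rm osc}(f_0)^2\le\tfrac12$.

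The key computation is the exact effect of one ring on $\cE$. When the clock at $x$ rings, only the terms of $\cE$ touching $x$ change; by reversibility, $\pi(v)P_{vx}=\pi(x)P_{xv}$, the ``row-$x$'' and ``column-$x$'' contributions are equal before and after the update, and a one-line expansion comparing $\sum_w P_{xw}\big((Pf)(x)-f(w)\big)^2$ with $\sum_w P_{xw}\big(f(x)-f(w)\big)^2$ shows $\cE$ decreases by precisely $\pi(x)\big((I-P)f\big)(x)^2$. Summing over all vertices $x$ (each ringing at rate $1$),
\[
\frac{d}{dt}\,\EE[\cE(f_t)]=-\EE\big\|(I-P)f_t\big\|_{2,\pi}^2 .
\]
Since $P$ is reversible, $I-P$ is self-adjoint and nonnegative on $L^2(\pi)$ with smallest nonzero eigenvalue $\gamma$, so $\|(I-P)f\|_{2,\pi}^2=\langle f,(I-P)^2 f\rangle_\pi\ge\gamma\langle f,(I-P)f\rangle_\pi=\gamma\,\cE(f)$; Grönwall gives $\EE[\cE(f_t)]\le e^{-\gamma t}\cE(f_0)\le\tfrac12 e^{-\gamma t}$.

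For (a) it remains to bound ${\rm osc}(f)$ below by $\cE(f)$. By the Poincaré inequality $\cE(f)\ge\gamma\,\vpi(f)$, and since $\|f-\EE_\pi f\|_\infty\ge\tfrac12{\rm osc}(f)$ (the mean $\EE_\pi f$ lies between $\min f$ and $\max f$) while $\|g\|_{2,\pi}^2\ge\pi_{\min}\|g\|_\infty^2$, one gets $\vpi(f)\ge\tfrac14\pi_{\min}\,{\rm osc}(f)^2$, hence ${\rm osc}(f)^2\le\frac{4}{\gamma\pi_{\min}}\cE(f)$. Therefore $\EE[{\rm osc}(f_t)^2]\le\frac{2}{\gamma\pi_{\min}}e^{-\gamma t}$, so by Markov $\PP(\tau_\eps>t)=\PP({\rm osc}(f_t)>\eps)\le\frac{2e^{-\gamma t}}{\gamma\pi_{\min}\eps^2}$. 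Integrating $\min\!\big(1,\tfrac{2e^{-\gamma t}}{\gamma\pi_{\min}\eps^2}\big)$ over $t\ge0$ and splitting at $t_0:=\tfrac1\gamma\log\tfrac{2}{\gamma\pi_{\min}\eps^2}$, where the bound equals $1$, yields $\EE(\tau_\eps)\le t_0+\tfrac1\gamma=\tfrac1\gamma\log\!\big(\tfrac{2e}{\gamma\pi_{\min}\eps^2}\big)$, which is (a).

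For (b) I would argue epoch by epoch: the dynamics is scale-invariant, so by the strong Markov property it suffices to show that, from any $f_0$ with ${\rm osc}(f_0)\le1$, the time $T$ until ${\rm osc}(f_t)\le\tfrac12$ has $\EE(T)\le{\rm diam}(G)\,|E|+1$; iterating over $\lceil\log_2(1/\eps)\rceil$ epochs then gives the claim. To estimate $\EE(T)$ I would use the dual random walk: conditionally on the clocks, $f_t(v)=\EE[f_0(Z^v_t)]$ where $Z^v$ is a rate-$1$ continuous-time simple random walk on $G$ run backward from $v$, and two such walks can be coupled to coalesce at their meeting time, so $|f_t(v)-f_t(w)|\le{\rm osc}(f_0)\cdot\PP(Z^v_t\ne Z^w_t)$ and ${\rm osc}(f_t)$ is governed by a non-coalescence probability. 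The expected meeting time of two simple random walks is at most the commute time, which by the electrical-network identity equals $2|E|\,R_{\rm eff}(v,w)\le 2|E|\,{\rm diam}(G)$, and this is the source of the factor ${\rm diam}(G)\,|E|$. The main obstacle is quantitative, not conceptual: a crude union bound over the $\binom{n}{2}$ pairs $(v,w)$ would cost a spurious $\log n$ factor, so reaching oscillation $\tfrac12$ (rather than $0$) must be tied to the hitting-time scale and not to the relaxation-time scale — this forces one to handle the coalescence of the whole family $\{Z^v\}_{v\in V}$ simultaneously, and squeezing out the precise constant ${\rm diam}(G)\,|E|$ (as opposed to a constant multiple of it) is the delicate step.
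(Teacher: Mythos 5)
Your Part (a) is correct and runs along essentially the same lines as the paper's argument: you compute the exact energy drop at each ring, conclude $\tfrac{d}{dt}\EE[\cE(f_t)]=-\EE\|(I-P)f_t\|^2$, and then drive $\cE$ down exponentially. Your route to the inequality $\|(I-P)f\|^2\geq\gamma\,\cE(f)$ — the spectral fact $(I-P)^2\succeq\gamma(I-P)$ in $L^2(\pi)$ — is slightly cleaner than the paper's Poincar\'e-plus-Cauchy--Schwarz, and using Gr\"onwall in expectation rather than the supermartingale $e^{\gamma t}\cE_t$ is an innocuous repackaging. The floor $\cE(f)\geq\tfrac14\gamma\pi_{\min}\,{\rm osc}(f)^2$ and the final integration match the paper's.

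Part (b), however, has a genuine gap and you candidly flag it yourself: the dual-walk/coalescence sketch does not close. The difficulty is not only the union-bound $\log n$ you point out; more fundamentally, $|f_t(v)-f_t(w)|$ is controlled by the non-coalescence probability of two shared-clock walks $Z^v,Z^w$ \emph{conditionally on the clocks}, a quantity that is not directly bounded by the unconditional meeting/commute time — so the electrical-network identity you invoke does not bound the relevant conditional probability. The paper avoids both problems by staying entirely within the energy framework. After centering so that $\|f_t\|_\infty\le\tfrac12$, Cauchy--Schwarz gives the \emph{quadratic} drift
\begin{equation}
D(\cE_t)=-\|(I-P)f_t\|^2\ \le\ -4\cE_t^2,
\end{equation}
which cannot be exponentiated directly; instead one passes to the Lyapunov function $Y_t:=\psi(\cE_t)$ with $\psi(x)=e^{-1/x}$ so that $D(Y_t)\le-4Y_t$, hence $e^{4t}Y_t$ is a supermartingale. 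The floor that replaces your pairwise-coalescence estimate is purely variational: if ${\rm osc}(f_t)\ge\eps$ with $f_t(v)-f_t(w)\ge\eps$, then comparing $(f_t-f_t(w))/(f_t(v)-f_t(w))$ with the harmonic function attaining $\mathcal C(v\leftrightarrow w)$ yields
\begin{equation}
\cE_t\ \ge\ \frac{\eps^2}{\mathcal R(v\leftrightarrow w)\,|E|}\ \ge\ \frac{\eps^2}{\mathcal R_{\max}\,|E|}\ \ge\ \frac{\eps^2}{{\rm diam}(G)\,|E|}.
\end{equation}
Markov's inequality applied to the supermartingale at this floor gives $\EE[\tau_{1/2}]\le\mathcal R_{\max}|E|+\tfrac14$, and the epoch iteration you correctly anticipate (using scale invariance and the strong Markov property) then produces the stated bound — with no union bound over pairs and no spurious $\log n$. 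In short: your instinct that the commute/resistance scale governs Part (b) is right, but the quantity that actually sees it is the Dirichlet energy, not a family of coalescing walks.
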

We prove this theorem in a sharper form in the next two subsections.  In Section \ref{nonr}, we bound the convergence time for irreducible chains that need not be reversible.

\noindent{\bf Remarks.}
\begin{enumerate}
\item This theorem implies part (i) of Theorem \ref{cor:yuval}.
To obtain the last sentence of that theorem, we use the known fact  that the diameter of a regular graph with $n$ nodes and degree $\Delta$ is at most $3n/\Delta$; see, e.g., the proof of Proposition 10.16 in \cite{Levin-Peres}.
\item Note that if $ f_t(v)\in [a,a+\eps ]$ for all $v \in V$, then the limiting consensus $f_\infty$ will also be in this interval.
\item On the $n$-cycle, we have $\gamma =(2+o(1))\pi^2/n^2$ (See Section 12.3.1 in \cite{Levin-Peres}), so the bound in (b) is better; but in most graphs, (a) is superior. For example, on expander graphs, where $\gamma$ is bounded away from 0, the bound in (a) is logarithmic in $n$, while the bound in (b) is larger than $n$. 
\item By taking  the initial opinion vector $f_0$ to be the right eigenvector of $P$ corresponding to $\lambda_2$, we show in Section 5 that
${\mathbb E}(\tau_\eps )$ is at least of order $\gamma^{-1}$ for constant $\eps $, so the bound in (a) is sharp up to the logarithmic factor.
\end{enumerate}

%Denote $\pi_{max}=\max_{v\in V}\pi(v)=\frac{\Delta}{2|E|}$.
%Write $\mu_t=\sum_v \pi(v)A_t(v)$ (average opinion at %time $t$).
%$\sigma_t^2=\sum_v(A_t(v)-\mu_t)^2\pi(v)$ (empirical %variance).

\medskip
\subsection{Preliminaries}
We first recall some  notation and results from Markov chain theory, that can be found, e.g., in \cite{Saloff} and  \cite{Levin-Peres}.  
\begin{itemize}
\item We will use the $\pi$-weighted scalar product on $\RR^V$:
$$\langle f,g\rangle=\langle f,g\rangle_{\pi}:=\sum_{v\in V}\pi(v)f(v)g(v)$$
and the corresponding norm $\|f\|=\|f\|_\pi:=\sqrt{\langle f,f\rangle}$.
\item The \emph{time reversal} of a Markov chain with transition matrix $P$ and stationary distribution $\pi$, is given by the matrix $P^*$, where
$$\forall  v,w \in V, \quad \pi(v) P^*_{vw}=\pi(w)P_{wv} \,,$$ 
so $P^*$ is the adjoint of $P$ with respect to the $\pi$-weighted scalar product.
The chain is   reversible iff $P^*=P$.
In general, since $P$ is irreducible,  $P^*$ is also irreducible
and has the same stationary distribution $\pi$. This also holds for the \emph{symmetrization}
$ \widetilde{P}:=\frac{P+P^*}{2} \,,$
which is reversible.
\item The \emph{empirical mean} of $f:V \to \RR$ is
$$E_\pi(f):=  \langle f, 1 \rangle_\pi  \,.
$$
\item The \emph{empirical variance} of $f:V \to \RR$ is
$$\vpi(f):=\| f\|_\pi^2 - \langle f, 1 \rangle_\pi^2 \,.
$$
\item The {\em energy} of $f:V\rightarrow \mathbb{R}$ is \begin{equation} \label{defenergy}
\mathcal{E}_P(f):= \langle(I-P)f,f\rangle= \frac{1}{2}\sum_{v,w \in V} \pi(v)P_{vw}|f(v)-f(w)|^2.
\end{equation}
(See, e.g., Lemma 13.6 in \cite{Levin-Peres} for this identity in the reversible case, and \cite{Fill91} or \cite{Saloff} for the general case.)
Note that the time reversal $P^*$ and the symmetrization $\wP$ have the same energy functionals as $P$:
\begin{equation} \label{energy}
\forall f:V \to \RR, \quad \mathcal{E}_P(f)=\mathcal{E}_{P^*}(f)=
\mathcal{E}_{\wP}(f) \,.
\end{equation}
%\item The mean-squared deviation
%$\|(I-P)f\|$
Denote by $\lambda_2(\wP)$ the second largest eigenvalue of the  reversible matrix $\wP$, and let $\gamma:=1- \lambda_2(\wP)$ denote its \emph{spectral gap}.
\item There is a  variational formula for the spectral gap: 
\begin{equation} \label{poinc}
    \gamma=\min\left\{\frac{\cE_P(f)}{\vpi(f)}\, | \, f:V\rightarrow \mathbb{R} \text{ not constant} \right\},
    \end{equation}
    %where $E_\pi(f)=\langle f,1 \rangle_{\pi}$ and ${\rm Var}_\pi %(f)=\|f-E_\pi(f)\|^2_{\pi}$. 
    See, e.g., eq.\ (1.2.13) in \cite{Saloff} or Remark 13.8 in \cite{Levin-Peres}.
\end{itemize}

\noindent{\bf Definition.}
Let $ \cF_t  $ denote the $\sigma$-algebra generated by    $\{f_s\}_{s \le t}$. In this section $f_0$ is deterministic, so $ \cF_t  $ is generated by the clock rings in $[0,t]$. Let $\Psi:[0,\infty) \times\RR^V\to\RR$ be continuous. If the limit
\begin{equation}
D\bigl(\Psi(t,f_t)\bigr):=\lim_{h \to 0} \frac1h \EE\bigl[\Psi\bigl(t+h,f_{t+h}\bigr)- \Psi\bigl(t,f_{t}\bigr) | \cF_t\bigr]
\end{equation}
exists, we refer to it as the \emph{drift} of the process $\{\Psi(t,f_t)\}$ at time $t$. In Markov process theory (see, e.g., \cite[Chap. 6]{LeGall}), $D$ is  the infinitesimal generator of the time-space process $\{(t,f_t)\}_{t \ge 0}$. In particular, if  
$D\bigl(\Psi(t,f_t)\bigr) \equiv 0$ for all $t \ge 0$, then $\{\Psi(t,f_t)\}_{t \ge 0}$ is a martingale, and if $D\bigl(\Psi(t,f_t)\bigr) \le  0$ for all $t \ge 0$, then $\{\Psi(t,f_t)\}_{t \ge 0}$ is a supermartingale.

\medskip

The drift operator $D$ satisfies a version of the product rule for derivatives. 
Suppose that   $\varphi:[0,\infty) \to \RR$ is differentiable and $D\bigl(\Psi(t,f_t)\bigr)$ exists for all $t$.
Then
\begin{equation}
    \begin{split}
        &\varphi(t+h) \cdot \Psi(t+h,f_{t+h})-\varphi(t) \cdot \Psi(t,f_t)\\ &=\varphi(t+h) \cdot \Bigl(\Psi(t+h,f_{t+h})-  \Psi(t,f_t)\Bigr)+\bigl(\varphi(t+h)-\varphi(t)\bigr) \cdot \Psi(t,f_t) \,.
    \end{split}
\end{equation}
Taking conditional expectation given $\cF_t$, dividing by $h$,  and  letting $h \to 0$ gives
\begin{equation} \label{prod} D\bigl(\varphi(t) \cdot \Psi(t,f_t)\bigr)=\varphi(t) \cdot D\bigl(\Psi(t,f_t)\bigr)+ \varphi'(t)\cdot \Psi(t,f_t)  \,. \end{equation}
In the next lemma, we record the drifts of some key summaries of the opinion profile $f_t$.
\begin{lem} \label{lemdrift}
The empirical mean $M_t:=E_\pi(f_t)$ is a martingale, and the drift of $M_t^2$ satisfies
\begin{equation} \label{M2drift}
 \pi_{\min} \|(I-P)f_t\|_\pi^2 \le D(M_t^2) \le \pi_{\max} \|(I-P)f_t\|_\pi^2\,.
\end{equation}
We also have
\begin{equation} \label{Dsquare}
  D(\|f_t\|^2 )
  =\langle(P^*P-I)f_t,f_t\rangle=-\cE_{P^*P}(f_t) \,.
\end{equation}
\end{lem}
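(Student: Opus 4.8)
The plan is to recognize $D$ as the infinitesimal generator of the time-space process and simply evaluate it on the three functionals $M_t$, $M_t^2$ and $\|f_t\|^2$, by summing over vertices the effect of a single clock ring. The first step is to put the generator in a usable form. For $v\in V$ and $f\in\RR^V$, let $f^{(v)}$ denote the profile obtained from $f$ by overwriting the $v$-coordinate with $(Pf)(v)=\sum_{u}P_{vu}f(u)$ and leaving every other coordinate fixed; thus if the clock of $v$ is the only clock to ring in a short interval, $f_t$ is replaced by $f_t^{(v)}$. Since the clocks are independent rate-$1$ Poisson processes and $V$ is finite, in $[t,t+h]$ the event ``only the clock of $v$ rings, and exactly once'' has probability $h+O(h^2)$, while ``two or more rings occur'' has probability $O(h^2)$. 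Combined with the observation that $f_t$ always stays in the convex hull of $f_0$ (so the increments of any polynomial functional are uniformly bounded), this yields, for polynomial $\Psi$,
\[
D\bigl(\Psi(f_t)\bigr)=\sum_{v\in V}\bigl(\Psi(f_t^{(v)})-\Psi(f_t)\bigr).
\]

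Next I would apply this to $\Psi(f)=E_\pi(f)=\langle f,1\rangle_\pi$. A ring at $v$ changes $M_t$ by $\delta_v:=\pi(v)\bigl((P-I)f_t\bigr)(v)$, so $D(M_t)=\langle (P-I)f_t,1\rangle_\pi=\langle f_t,(P^*-I)1\rangle_\pi$, which vanishes because $P^*$ is stochastic, i.e.\ $P^*1=1$; hence $M_t$ is a martingale. For $M_t^2$, a ring at $v$ produces the increment $2M_t\delta_v+\delta_v^2$, so $D(M_t^2)=2M_tD(M_t)+\sum_v\delta_v^2=\sum_v\pi(v)^2\bigl((I-P)f_t\bigr)(v)^2$; writing this as $\sum_v\pi(v)a_v$ with $a_v:=\pi(v)\bigl((I-P)f_t\bigr)(v)^2\ge0$ and $\sum_v a_v=\|(I-P)f_t\|_\pi^2$, and bounding $\pi_{\min}\le\pi(v)\le\pi_{\max}$ term by term, gives \eqref{M2drift}. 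Finally, applying the generator to $\Psi(f)=\|f\|_\pi^2=\langle f,f\rangle_\pi$, a ring at $v$ touches only the $v$-coordinate and contributes $\pi(v)\bigl((Pf_t)(v)^2-f_t(v)^2\bigr)$, so $D(\|f_t\|^2)=\|Pf_t\|_\pi^2-\|f_t\|_\pi^2=\langle P^*Pf_t,f_t\rangle-\langle f_t,f_t\rangle=\langle(P^*P-I)f_t,f_t\rangle$ by adjointness of $P^*$, which equals $-\cE_{P^*P}(f_t)$ by the definition \eqref{defenergy} of the energy applied to the stochastic matrix $P^*P$ (for which $\pi$ is stationary, since $\pi P^*=\pi P=\pi$).

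The one delicate point is the limit defining the generator: one must verify that the contributions of multiple simultaneous rings are genuinely negligible. This is where I expect to spend the most care, though it is routine — the key fact is that $\|f_t\|_\infty\le\|f_0\|_\infty$ for all $t\ge 0$ (each update is a convex combination of current opinions), so the increments of $M_t$, $M_t^2$ and $\|f_t\|^2$ over $[t,t+h]$ are uniformly bounded, and hence the $O(h^2)$ error terms vanish after dividing by $h$ and letting $h\to 0$.
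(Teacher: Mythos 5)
Your proof is correct and follows essentially the same route as the paper: both compute the drift by summing, over vertices $v$, the effect of a single ring of $v$'s clock in a short interval, and both use the martingale property of $M_t$ to drop the cross term (your $2M_tD(M_t)=0$ is the paper's ``orthogonality of martingale increments''). Writing the general generator formula $D\bigl(\Psi(f_t)\bigr)=\sum_v\bigl(\Psi(f_t^{(v)})-\Psi(f_t)\bigr)$ first and then specializing is a slightly more systematic packaging, but the computations and the justification of the $O(h^2)$ error term are the same.
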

\begin{proof}
We have
$$\EE[M_{t+h}-M_t | \cF_t]=h\sum_{v \in V} \pi(v) \bigl((Pf_t)(v)-f_t(v)\bigr)+O(h^2)=h\pi(P-I)f_t+O(h^2)=O(h^2) \,.$$
Dividing by $h$ and letting $ h \to 0$, we see that $\{M_t\}$ has zero drift, so it is a martingale. 

Next, we consider the second moment of $M_t$. By orthogonality of Martingale increments,   
$$\EE[M_{t+h}^2-M_t^2  | \cF_t]=\EE[(M_{t+h}-M_t)^2 | \cF_t]=h\sum_{v \in V} \pi(v)^2 \bigl((Pf_t)(v)-f_t(v)\bigr)^2+O(h^2) \,.$$
Dividing by $h$ and passing to the limit yields 
\begin{equation} \label{M2true}
    D(M_t^2)=\sum_{v \in V} \pi(v)^2 \bigl((Pf_t)(v)-f_t(v)\bigr)^2  \,,
\end{equation}
which implies \eqref{M2drift}. Finally,
$$\EE \big[ \|f_{t+h}\|^2-\|f_t\|^2 | \cF_t \big] =h\sum_{v\in V} \pi(v)\bigl((Pf_t)(v)^2-f(v)^2\bigr)+O(h^2) \,.
$$
Thus $D(\|f_t\|^2 )=\|Pf_t\|^2-\|f_t\|^2$, and  \eqref{Dsquare} follows.
\end{proof}

\subsection{Bounds for reversible chains}
In this section we prove Part (a) of Theorem~\ref{rate} which immediately follows from the following proposition.
\begin{prop}
If $P$ is reversible, then the time to consensus satisfies 
$$\EE(\tau_\eps)\le  t_\eps:=\frac{1}{\gamma}\log\Bigl(\frac{4e\cE_P(f_0)}{\gamma  \cdot \pi_{\min} \eps^2}\Bigr) \,.$$
Moreover,
$$\forall t, \quad \PP(\tau_\eps>t_\eps+t) \le e^{-(\gamma t+1)}      \,.$$
   
\end{prop}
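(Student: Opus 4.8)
The plan is to track the empirical variance $\vpi(f_t)$ and show it decays at rate roughly $2\gamma$. First I would use Lemma~\ref{lemdrift}: since $M_t = E_\pi(f_t)$ is a martingale with the drift of $M_t^2$ given by \eqref{M2true}, and since $\vpi(f_t) = \|f_t\|^2 - M_t^2$, combining \eqref{Dsquare} with \eqref{M2drift} gives
\[
D(\vpi(f_t)) = -\cE_{P^*P}(f_t) - D(M_t^2) \le -\cE_{P^*P}(f_t) \le 0.
\]
The key inequality I would then establish is $\cE_{P^*P}(f_t) \ge 2\gamma\, \vpi(f_t)$, or at least $\cE_{P^*P}(f) \ge c\gamma\,\vpi(f)$ for a suitable constant. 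This should follow from the operator identity $I - P^*P = (I-P^*) + (I - P) - (I-P^*)(I-P)$ together with the Poincaré inequality \eqref{poinc} applied to the symmetrization $\wP$, using that $\cE_P = \cE_{P^*} = \cE_{\wP}$ from \eqref{energy}; in the reversible case $P^* = P$, so $I - P^*P = I - P^2 = (I-P)(I+P) \succeq (I-P)$ if the spectrum of $P$ sits in $[0,1]$, and more carefully $\cE_{P^2}(f) = \sum_j (1-\lambda_j^2)\langle f,\varphi_j\rangle^2 \ge (1-\lambda_2^2)\vpi(f) \ge \gamma\,\vpi(f)$ in the eigenbasis, noting $1-\lambda_j^2 \ge 1 - \lambda_j \ge \gamma$ only fails for negative eigenvalues — so I'd use $1-\lambda_j^2 = (1-\lambda_j)(1+\lambda_j) \ge 1-\lambda_j \ge \gamma$ when $\lambda_j \ge 0$ and handle $\lambda_j<0$ separately via $1-\lambda_j^2 \le 1$ but $\ge$ the relevant bound; cleanest is simply $1 - \lambda_j^2 \ge \gamma$ whenever $|\lambda_j| \le 1-\gamma$, which holds for all $j \ge 2$. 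This yields $D(\vpi(f_t)) \le -\gamma\,\vpi(f_t)$.

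Given the differential inequality, I would apply the product rule \eqref{prod} with $\varphi(t) = e^{\gamma t}$ to conclude that $e^{\gamma t}\vpi(f_t)$ is a supermartingale, hence $\EE[\vpi(f_t)] \le e^{-\gamma t}\vpi(f_0) \le e^{-\gamma t}\cE_P(f_0)/\gamma$ by \eqref{poinc}. Next I need to pass from empirical variance to oscillation. The step here is: if $\vpi(f_t)$ is small, then $\mathrm{osc}(f_t)$ is small \emph{after one more round of updates}, because each coordinate, once it rings, becomes a $\pi$-reweighted-type average and is pulled toward $E_\pi(f_t)$; quantitatively $|f_t(v) - M_t|^2 \le \vpi(f_t)/\pi(v) \le \vpi(f_t)/\pi_{\min}$ once $v$ has rung after the variance became small — but actually the cleaner route is to observe directly that for any $v$, after $v$ rings at time $s$, $f_s(v) = (Pf_{s-})(v)$, and $(Pf)(v) - M \;$ has $\pi$-norm controlled; iterating, once all vertices have rung we get $\mathrm{osc}(f_t)^2 \lesssim \vpi(f_{t_0})/\pi_{\min}$. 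Combining: choose $t_\eps = \frac{1}{\gamma}\log\!\bigl(\frac{4e\,\cE_P(f_0)}{\gamma\,\pi_{\min}\eps^2}\bigr)$ so that $\EE[\vpi(f_{t_\eps})] \le \gamma\pi_{\min}\eps^2/(4e)$; then a Markov-inequality plus a short argument that consensus persists (Remark 1: once $\mathrm{osc}\le\eps$ it stays $\le\eps$, actually $\mathrm{osc}$ is non-increasing) gives $\PP(\tau_\eps > t_\eps) \le 1/e$ roughly, and then restarting at $t_\eps$ and using the supermartingale tail gives $\PP(\tau_\eps > t_\eps + t) \le e^{-\gamma(t+1)}$, from which $\EE(\tau_\eps) \le t_\eps + \int_0^\infty e^{-\gamma(t+1)}\,dt = t_\eps + \frac{e^{-\gamma}}{\gamma} \le t_\eps$.

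The main obstacle I anticipate is the passage from small empirical variance to small oscillation with the right dependence on $\pi_{\min}$ and without losing an extra factor that would spoil the logarithm — in particular making precise that after the variance drops below a threshold, within $O(1/\gamma)$ additional expected time every vertex has updated at least once and the oscillation is genuinely controlled, rather than just the $\pi$-weighted variance. One must be careful that a single un-updated vertex retains a stale opinion; the fix is that $\mathrm{osc}(f_t) \le \mathrm{osc}(f_0)$ always and $\max f_t \searrow f_\infty$, $\min f_t \nearrow f_\infty$, so $\mathrm{osc}(f_t)$ is monotone and it suffices to bound it at a single well-chosen time; bounding $|f_t(v)-M_t|$ uniformly then reduces to the observation that any vertex's current opinion is an average (along a path of past updates) of opinions that were all within $\sqrt{\vpi/\pi_{\min}}$ of $M_t$ — this requires a careful but routine backward-in-time tracing of the update history, and writing it cleanly is where the work lies. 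A secondary technical point is justifying that the drift identities and the product rule are applicable here (finiteness of $V$, piecewise-constant right-continuous paths), which is standard for pure-jump Markov processes with bounded jump rates.
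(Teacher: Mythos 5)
Your proposal tracks the empirical variance $\vpi(f_t)$, whereas the paper tracks the Dirichlet energy $\cE_P(f_t)$. This is not a cosmetic difference, and it is where the argument breaks.

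The paper observes that for reversible $P$, updating at a vertex $v$ decreases $\cE_P(f)$ by exactly $\pi(v)\bigl((I-P)f(v)\bigr)^2$, which gives $D(\cE_t)=-\|(I-P)f_t\|^2$, and then a Cauchy--Schwarz argument plus the Poincar\'e inequality yields $\|(I-P)f\|^2\ge\gamma\,\cE_P(f)$. In the eigenbasis this last step reads $\sum_{j\ge 2}(1-\lambda_j)^2 a_j^2\ge\gamma\sum_{j\ge 2}(1-\lambda_j)a_j^2$, which holds term-by-term because $1-\lambda_j\ge 1-\lambda_2=\gamma$ for all $j\ge 2$, including when $\lambda_j$ is very negative. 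Your route instead uses $D(\vpi(f_t))\le -\cE_{P^2}(f_t)$ and tries to show $\cE_{P^2}(f)\ge\gamma\,\vpi(f)$, i.e.\ $\sum_{j\ge 2}(1-\lambda_j^2)a_j^2\ge\gamma\sum_{j\ge 2}a_j^2$. You assert that ``$1-\lambda_j^2\ge\gamma$ whenever $|\lambda_j|\le 1-\gamma$, which holds for all $j\ge 2$,'' but the last clause is false: $\gamma=1-\lambda_2$ only constrains the eigenvalue closest to $+1$, not the one closest to $-1$. If $P$ has an eigenvalue $\lambda_n=-1+\delta$ with $\delta\ll\gamma$ (as happens for nearly bipartite reversible chains), then $1-\lambda_n^2\approx 2\delta\ll\gamma$, and $\cE_{P^2}(f)\ge\gamma\,\vpi(f)$ fails for $f$ aligned with that eigenvector. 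The quantity your argument actually controls is $\gamma_{P^2}=1-\max(\lambda_2^2,\lambda_n^2)$, which can be arbitrarily smaller than $\gamma$. So the variance-based route does not reach the stated rate $\gamma$; the energy-based route does, and that is exactly why the paper switches to tracking $\cE_P(f_t)$ in the reversible case.

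A second, more minor point: you treat the passage from small empirical variance to small oscillation as a delicate step requiring a backward trace of the update history and waiting for every vertex to ring. No such argument is needed. The implication is deterministic and instantaneous: if $\mathrm{osc}(f)\ge\eps$, then since $\min f\le E_\pi(f)\le\max f$, some vertex $v$ has $|f(v)-E_\pi(f)|\ge\eps/2$, hence $\vpi(f)\ge\pi(v)\,(f(v)-E_\pi(f))^2\ge\pi_{\min}\eps^2/4$ pointwise, with no extra clock rings. (This is the inequality \eqref{recall} in the paper.) Combined with the supermartingale $e^{\gamma t}\cE_P(f_t)$ and a Markov bound, this gives the tail estimate directly, and one then integrates $\PP(\tau_\eps>t_\eps^*+s)\le e^{-\gamma s}$ over $s\ge 0$ to obtain $\EE(\tau_\eps)\le t_\eps^*+1/\gamma=t_\eps$. (Your final line ``$t_\eps+e^{-\gamma}/\gamma\le t_\eps$'' is not true as written; the clean computation integrates from $t_\eps^*$, not from $t_\eps$.)

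To repair your proof, replace the variance by the energy: show $D(\cE_P(f_t))=-\|(I-P)f_t\|^2$ using the reversibility computation, then prove $\|(I-P)f\|^2\ge\gamma\,\cE_P(f)$ via Cauchy--Schwarz on $\langle(I-P)(f-M),f-M\rangle$ together with $\gamma\,\vpi(f)\le\cE_P(f)$. Everything downstream of that point in your sketch then goes through, with $\cE_t\ge\gamma\pi_{\min}\eps^2/4$ replacing the variance threshold.
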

 \begin{proof} If $f:V\rightarrow \mathbb{R}$ is updated at $v$ according to our dynamics, then the energy $\mathcal{E}_P(f)$ is decreased by 
\begin{eqnarray} \nonumber  && \sum_{w\in V} {\pi(v)P_{vw}} \Bigl(\bigl(f(v)-f(w)\bigr)^2- \bigl(Pf(v)-f(w)\bigr)^2\Bigr) \\  \nonumber &=&
 \pi(v) \sum_{w\in V} P_{vw} \bigl(f(v)-Pf(v)\bigr) \cdot     \bigl(f(v)+Pf(v)-2f(w)\bigr)  \\  \nonumber &=& \pi(v)\bigl((I-P)f\,(v)\bigr)^2 \,.
\end{eqnarray}
(This computation used reversibility; in the non-reversible case, the coefficient $\pi(v)P_{vw}$ in the first line would be replaced by its average with $\pi(w)P_{wv}$. In that  case, the  energy could increase in an update and need not be a supermartingale, see the first paragraph of Subsection~\ref{nonr}.)
Abbreviating  $\cE_t:=\cE_P(f_t)$, we infer from the preceding display that 
\begin{equation} \label{precauchy} \mathbb{E}\left[\mathcal{E}_t-\mathcal{E}_{t+h}|{\mathcal F}_t\right]=h\sum_{v\in V}\pi(v)\left[(I-P)f_t(v)\right]^2 +O(h^2) =h\|(I-P)f_t\|^2+O(h^2)\,.
\end{equation}
Dividing by $h$ and letting $h \downarrow 0$, we get
\begin{equation} \label{energydrift}  
D(\cE_t)=-\|(I-P)f_t\|^2 \,.
\end{equation}

 By the variational formula \eqref{poinc}, we have
$$ \gamma\|f_t-M_t\|^2 =\gamma \cdot \vpi(f_t) \le \cE_t \,.$$
Observe that $\langle (I-P)f,1 \rangle=0$ for any $f : V\to {\mathbb R}$, so    by Cauchy-Schwarz,
\begin{eqnarray} \label{cauchy2} \nonumber
  \|(I-P)(f_t)\|^2\cdot  \cE_t &=&\|(I-P)(f_t-M_t)\|^2  \cdot   \cE_t \\ \nonumber &\ge& \|(I-P)(f_t-M_t)\|^2\cdot \gamma  \|f_t-M_t\|^2 \\ &\geq&  \gamma \langle(I-P)(f_t-M_t),(f_t-M_t)\rangle^2=\gamma \cE_t^2 \,.
\end{eqnarray}
Therefore,
\begin{equation} \label{cauchy3}
 \|(I-P)(f_t)\|^2    \geq  \gamma \mathcal{E}_t \,.
\end{equation}
 
Inserting this in \eqref{energydrift} yields
\begin{equation} \label{precauchy2}
D(\cE_t)  \le  -\gamma  \cE_t \,.
\end{equation}
By the product rule,
$$\Gamma_t:=e^{\gamma t} \cE_t $$ satisfies
$D(\Gamma_t) \le 0$, so $\{\Gamma_t\}$ is a non-negative supermartingale.
Observe that
$$\text{osc}(f_t) \ge \eps \Rightarrow \: \exists v :\,  |f_t(v) - M_t|  \ge \eps/2 \Rightarrow \:  \vpi(f_t) \ge \frac{\pi_{\min} \eps^2}4 \Rightarrow \:  \cE_t \ge  \frac{\gamma  \cdot \pi_{\min} \eps^2}4 \,.
$$
Consequently,
$$\cE_0=\Gamma_0 \ge \EE(\Gamma_t) \ge e^{\gamma t} \PP(\tau_\eps>t) \frac{\gamma  \cdot \pi_{\min} \eps^2}4 \,,
$$
so $\PP(\tau_\eps>t) \le e^{-\gamma t} \frac{4\cE_0}{\gamma  \cdot \pi_{\min} \eps^2}$.
Setting
$$t_\eps^*:=\frac{1}{\gamma}\log\Bigl(\frac{4\cE_0}{\gamma  \cdot \pi_{\min} \eps^2}\Bigr) \,,$$
we obtain that $\PP(\tau_\eps>t_\eps^*+t) \le e^{-\gamma t}$ for all $t$, so $\EE(\tau_\eps) \le t_\eps=t_\eps^*+1/\gamma$.
\end{proof} 
The next corollary bounds the variance of the consensus $f_\infty$.  
\begin{cor} \label{thm:varcons1}
  If $P$ is reversible, then 
  $$\pi_{\min} \cE_P(f_0)      \le \var(f_\infty) \le \pi_{\max} \cE_P(f_0) \,.$$
\end{cor}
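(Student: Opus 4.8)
The plan is to identify $\var(f_\infty)$ with the expected total drift accumulated by $M_t^2$, and then to evaluate that quantity through the energy. Throughout, $P$ is irreducible (being reversible with positive spectral gap) and, as in this section, $f_0$ is deterministic. First I would reduce to the martingale $M_t=E_\pi(f_t)$ of Lemma~\ref{lemdrift}. By Theorem~\ref{theorem: consensus}, $f_t(v)\to f_\infty$ a.s.\ for every $v$; since $V$ is finite and $\sum_v\pi(v)=1$, this gives $M_t=\langle f_t,1\rangle_\pi\to f_\infty$ and $\cE_t:=\cE_P(f_t)\to 0$ almost surely (the limiting profile is constant). All opinions stay in $[\min_v f_0(v),\max_v f_0(v)]$, so $M_t$, $M_t^2$, and $\cE_t$ are uniformly bounded. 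Hence $M_t$ is a bounded martingale with deterministic initial value $M_0=E_\pi(f_0)$, so $\EE(f_\infty)=\EE(M_\infty)=M_0$ and
\[
\var(f_\infty)=\EE(M_\infty^2)-M_0^2 .
\]

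Next I would write $\var(f_\infty)$ as an integral of the drift. By Dynkin's formula (standard, given the uniform bounds and the bounded total clock rate $|V|$), $\EE(M_t^2)=M_0^2+\EE\!\big(\int_0^t D(M_s^2)\,ds\big)$; since $D(M_s^2)\ge 0$ by \eqref{M2drift}, monotone convergence yields
\[
\var(f_\infty)=\EE\!\left(\int_0^\infty D(M_s^2)\,ds\right).
\]
By the two-sided estimate \eqref{M2drift}, the claimed inequality will then follow by sandwiching, provided one shows $\EE\!\big(\int_0^\infty\|(I-P)f_s\|_\pi^2\,ds\big)=\cE_P(f_0)$.

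For this last identity I would use \eqref{energydrift}: the process $\cE_t+\int_0^t\|(I-P)f_s\|^2\,ds$ has zero drift, hence is a martingale, so $\EE(\cE_t)+\EE\!\big(\int_0^t\|(I-P)f_s\|^2\,ds\big)=\cE_0=\cE_P(f_0)$. Letting $t\to\infty$ and using $\cE_t\to 0$ with bounded and monotone convergence finishes it (equivalently, by summing the per-update energy decreases $\pi(v)\big((I-P)f_{t-}(v)\big)^2$ computed in the proof of the preceding Proposition and taking expectations, using that each vertex rings at rate one). Substituting into the previous display yields $\pi_{\min}\cE_P(f_0)\le\var(f_\infty)\le\pi_{\max}\cE_P(f_0)$.

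The routine obstacles are the measure-theoretic justifications: that the zero-drift processes above are genuine (not merely local) martingales, and that the limits commute with the expectations. Both follow from the uniform boundedness of the opinions (they lie in the convex hull of $\{f_0(v)\}_{v\in V}$) and of the clock rates, together with nonnegativity of the relevant integrands, so the only real content is the two algebraic identities \eqref{M2drift} and \eqref{energydrift} together with $\cE_t\to 0$. The one conceptual point to handle with care is the reduction in the first step: the identity $\var(f_\infty)=\EE(M_\infty^2)-M_0^2$ relies essentially on $f_0$ being deterministic, so that $M_0$ is a constant and $M_t$ is an honest bounded martingale converging to $f_\infty$ in $L^2$.
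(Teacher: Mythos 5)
Your proof is correct and rests on exactly the same ingredients as the paper's: the martingale $M_t$ and the drift bounds \eqref{M2drift} and \eqref{energydrift} from Lemma~\ref{lemdrift}. The paper packages the argument a bit more compactly, observing directly that $M_t^2+\pi_{\max}\cE_P(f_t)$ (resp.\ $M_t^2+\pi_{\min}\cE_P(f_t)$) is a bounded supermartingale (resp.\ submartingale) and passing to the limit by bounded convergence, whereas you unfold the same computation into two applications of Dynkin's formula and sandwich; your intermediate identity $\EE\int_0^\infty\|(I-P)f_s\|_\pi^2\,ds=\cE_P(f_0)$ is implicit in the paper's version. The paper also normalizes $M_0=0$ at the outset, which lets it avoid carrying the $-M_0^2$ term, but that is cosmetic.
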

\begin{proof}
Observe that $M_\infty=\lim_t M_t=f_\infty$ a.s. We may assume that $M_0=E_\pi(f_0)=0$, since subtracting a constant from $f_0$ does not affect  
$\cE_P(f_0)$ and $\var(f_\infty)$. By Lemma \ref{lemdrift} and \eqref{energydrift},
$D\bigl(M_t^2+\pi_{\max}\cE_P(f_t)\bigr) \le 0$, so
$ M_t^2+\pi_{\max}\cE_P(f_t) $ is a supermartingale.
Thus, by  Lebesgue's bounded convergence theorem,
$$  \pi_{\max}\cE_P(f_0) \ge \EE(M_\infty^2)=\var(M_\infty)=\var(f_\infty) \,.$$
This proves the upper bound on $\var(f_\infty)$; the lower bound is proved similarly, using the submartingale $ M_t^2+\pi_{\min}\cE_P(f_t) $. 
\end{proof}
{\noindent \bf Remark.} The corollary above assumed that the initial opinion profile $f_0$ is deterministic. If $f_0$ is random, then  the same argument gives
$$\pi_{\min} \mathbb E \big[\cE_P(f_0) \big]      \le \var(f_\infty)-\var(M_0) \le \pi_{\max}  \mathbb E \big[\cE_P(f_0) \big] \,.$$
\subsection{The case of simple random walk}
In this section we prove Part (b) of Theorem~\ref{rate}. We begin by presenting a few well known notions and results regarding simple random walks. Since we assume each edge in $E$ has unit resistance, the  {\bf effective conductance} ${\mathcal C}(v\leftrightarrow w)$ from $v$ to $w$ is given by 
\[{\mathcal C} (v\leftrightarrow w):=2|E|\cdot \min\left\{\mathcal{E}(f)\, |\,  f:V\rightarrow \mathbb{R}, f(v)=1, f(w)=0\right\}\, \]
  (See, e.g., the solved exercise 2.13 in \cite{Lyons-Peres}.) The  {\bf effective resistance} is ${\mathcal R}(v\leftrightarrow w):=\frac{1}{{\mathcal C}(v\leftrightarrow w)}$.

\begin{itemize}
    \item The \emph{commute time} for simple random walk between $v$ and $w$ equals $2|E|R(v\leftrightarrow w)$ (See Prop.\ 10.7 in \cite{Levin-Peres}.)
    \item Write ${\mathcal R}_{\rm max}:=\max_{v,w\in V} {\mathcal R} (v \leftrightarrow w)$.
    Then ${\mathcal R}_{\rm max} \le {\rm diam}(G)$, and (as we show in the proof below) in Part (b) of Theorem \ref{rate}, ${\rm diam}(G)$ can be replaced by ${\mathcal R}_{\rm max}$.
    
\end{itemize}

\noindent{\bf Further Remarks:}
\begin{enumerate}
\item[1.] For a simple random walk on $G=(V,E)$ where $|V|=n$, we have $\gamma\geq \frac{c}{n|E|}$.  
\item[2.] On a regular graph, $\gamma\geq \frac{c}{n^2}$ and ${\rm diam(G)}\leq \frac{3n}{\Delta}$ where $\Delta=\max_v d_v$. In all these inequalities, $c>0$ is a universal constant.
\item[3.] On the hypercube with $n=2^\ell$ nodes, $\gamma=\frac{2}{\ell}$ but ${\mathcal R}_{\rm max} \in [1,3]$.  
\end{enumerate}
 
%\medskip

\begin{proof}[Proof of Theorem \ref{rate} (b)]
 Since ${\rm osc}(f_0)\leq 1$, by subtracting a constant from $f_0$, we may assume WLOG that $\max_{v \in V} |f_0(v)| \le 1/2$,
whence $\max_{v \in V} |f_t(v)| \le 1/2$ for all $t \ge 0$.
Recall that $\cE_t:=\cE_P(f_t)$ and, by \eqref{energydrift}, $ D(\cE_t)=-\|(I-P)f_t\|^2 $.
Since $\|f_t\| \leq 1/2$, we get via Cauchy-Schwarz that
\begin{equation} \label{cauchy}
\|(I-P)f_t\|^2\geq 4\|(I-P)f_t\|^2\cdot \|f_t\|^2 \geq 4\langle(I-P)f_t,f_t\rangle^2=4\cE_t^2 \,.
\end{equation}
Therefore, by \eqref{energydrift}  we have 
\begin{equation} \label{eq:enerdrift} D(\cE_t)=-\|(I-P)f_t\|^2\leq -4\cE_t^2 \,.
\end{equation}
Let $L_t:=4\mathbb E[\cE_t]$. 
We claim that 
\begin{equation}\label{equation L_t}
 L_t\leq \frac{1}{t+L_0^{-1}},\quad  \forall t\geq 0.    
\end{equation}
Let $N(t)$ be the total number of clock rings in $[0,t]$. To verify the claim, we first observe that for each $k \ge 0$, the conditional expectation
$a_k:=\mathbb E[\cE_t|N(t)=k]$ does not depend  on $t$, and satisfies $|a_k| \le 1$.
Therefore,
$$L_t=4\sum_{k=0}^\infty a_k \, {\mathbb P} (N(t)=k)=4\sum_{k=0}^\infty a_k \frac{(nt)^k}{k!}e^{-nt}$$
is a smooth function of $t$.
Since the energy is nonincreasing,  Fatou's Lemma gives 
$$-L'_t=4\lim_{h \downarrow 0} {\mathbb E} \Bigl[\frac{ \cE_t-\cE_{t+h}}{h} \Bigr] =4\lim_{h \downarrow 0} {\mathbb E} \Bigl( {\mathbb E}\Bigl[\frac{ \cE_t-\cE_{t+h}}{h} \Big| {\mathcal F}_t\Bigr] \Bigr)\geq -4\mathbb E[D(\cE_t)]\,.$$ By \eqref{eq:enerdrift}  and   Jensen's inequality, 
$$-L'_t \ge 16 \mathbb E[\cE_t^2]  \ge L_t^2 \,,
$$ 
whence
$ (1/L_t)'= {-L_t'}/{L_t^2} \ge 1 \,.$ 
Therefore $1/L_t\ge t+1/L_0$ and \eqref{equation L_t}
is verified.

Now, by the definition of $\mathcal R_{\mathrm{max}}$,
 $$ \tau_{1/2}>t \Rightarrow \: \text{osc}(f_t) \ge 1/2 \Rightarrow \: 4\cE_t \ge \frac{1}{2|E| {\mathcal R}_{\rm max} }  \,.
$$
Therefore, setting $t_*=4{\mathcal R}_{\rm max}|E|$ we get 
$$
\PP(\tau_{1/2} > t_*) \le \mathbb P \Big( 4\cE_{t_*} \ge \frac{1}{2|E|{\mathcal R}_{\rm max}} \Big) \leq L_{t_*}\cdot 2|E|{\mathcal R}_{\rm max} \leq \frac 12.
$$
Where the second inequality is Markov's inequality, and the third inequality follows from \eqref{equation L_t}. The same argument shows that 
$$
\PP(\tau_{1/2} > kt_*\mid\tau_{1/2}>(k-1)t_*)\leq \frac 1 2, \quad \forall k\geq 1.
$$
It follows that 
\begin{equation}\label{quad}
\mathbb E[\tau_{1/2}]\leq 2t_*=8{\mathcal R}_{\rm max}|E|.    
\end{equation}
Next, we use an iteration argument and  argue inductively that for all $k \ge 1$ and all initial opinion profiles $f_0$, we have
\begin{equation} \label{iter}
\mathbb{E}\tau_{2^{-k}} \le 8k{\mathcal R}_{\rm max}|E|.  
\end{equation}
Indeed, the base case $k=1$ is just \eqref{quad}.
Next, observe that 
$$\tau_{2^{-k-1}}(f_0) \le \tau_{1/2}(f_0)+
\tau_{2^{-k}}(2f_{\tau_{1/2}})\,,
$$
so applying the induction hypothesis and additivity of expectation proves \eqref{iter} for all $k \ge 1$. Taking $k=\lceil \log_2(1/\eps ) \rceil$ completes the proof of (b).

\medskip
 
\end{proof}
\subsection{Beyond reversible chains} \label{nonr}
In digraphs, the energy can increase and need not be a supermartingale. A simple  example is the directed walk on the $n$-cycle $\{0,1,\ldots, n-1\}$, where $P_{ij}=1$ iff $j=i+1 \mod n$.
If the initial configuration $f_0$ is given by  $f_0(k)=\frac1n \min\{k,n-k\}$, and $n$ is even, then $$\EE\bigl(\cE(f_h)-\cE(f_0)\bigr)=2h(n-4)/n^2+O(h^2)\,.$$ This issue persists if we make $P$ lazy by averaging it with the identity, or if we change $P$ to satisfy
$P_{ij}=1/2$ iff $j-i \in \{1,2\} \mod n$.
Instead of tracking the energy, we will track the empirical variance.

  In the next theorem, the matrix $P^*P$ will be important. This matrix could be reducible even if $P$ is irreducible (E.g., for the directed walk on the cycle,   $P^*P=I$.) Recall that $\wP=(P+P^*)/2$ and $\gamma=1-\lambda_2(\wP)$. Let $\gamma_{P^*P}=1-\lambda_2(P^*P)$. Note that $\gamma_{P^*P}>0$ iff $P^*P$ is irreducible.
\begin{thm} \label{genrate}
 Suppose that $4\vpi(f_0) \ge \eps^2 \pi_{\min}$. (Otherwise  $\, \tau_\eps=0$, see \eqref{recall}). Then

    \medskip
    
\begin{itemize}
    \item[{\rm (i)}] $\EE(\tau_\eps)\le \displaystyle \frac{1}{\widehat{\gamma}} \log\Bigl(\frac{4e \cdot \vpi(f_0)}{\eps^2 \pi_{\min}}\Bigr)$, 
    where $$\widehat{\gamma}:=
(1-\pi_{\min})\gamma_{P^*P}+2\pi_{\min}\gamma \,.
$$
    
    \medskip
    
    \item[{\rm (ii)}] If $P_{vv} \ge \delta>0$ for all $v \in V$,
    then   
%$\cE_{P^*P}(f) \ge  2\delta \cE_{P}(f)$
 %for every $f:V \to \RR$, and 
  $$\EE(\tau_\eps)\le \displaystyle \frac{1}{2 \delta \gamma} \log\Bigl(\frac{4e \cdot \vpi(f_0)}{\eps^2 \pi_{\min}}\Bigr)\,.$$
\end{itemize}
\end{thm}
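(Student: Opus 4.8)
The plan is to track the empirical variance $\vpi(f_t)$ instead of the energy, since the latter is not monotone in the non-reversible case. Write $V_t := \vpi(f_t)$. Using $V_t = \|f_t - M_t\|_\pi^2 = \|f_t\|_\pi^2 - M_t^2$, the drift can be computed from Lemma~\ref{lemdrift}: combining \eqref{Dsquare} with \eqref{M2true}, one gets
\[
D(V_t) = -\cE_{P^*P}(f_t) - \sum_{v\in V}\pi(v)^2\bigl((Pf_t)(v)-f_t(v)\bigr)^2
        = -\cE_{P^*P}(f_t) + \text{(correction)}.
\]
Actually the cleaner route is to center first: assume WLOG $M_0 = E_\pi(f_0) = 0$ (subtracting a constant changes neither $\vpi(f_0)$, $\gamma$, $\gamma_{P^*P}$, nor $\tau_\eps$), and then track $W_t := \|f_t\|_\pi^2$. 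By \eqref{Dsquare}, $D(W_t) = -\cE_{P^*P}(f_t)$, while $D(M_t^2) = \sum_v \pi(v)^2((Pf_t)(v)-f_t(v))^2 \ge 0$ by \eqref{M2true}, and $V_t = W_t - M_t^2$. The goal is an inequality of the form $D(V_t) \le -\widehat\gamma\, V_t$.

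The key algebraic step is to lower-bound $\cE_{P^*P}(f_t)$ minus the $M_t^2$-drift correction by $\widehat\gamma\,\vpi(f_t)$. First, $\cE_{P^*P}(f) = \langle (I-P^*P)f,f\rangle$ and, since $(I-P^*P)$ annihilates constants, this equals $\langle (I-P^*P)(f-M),(f-M)\rangle \ge \gamma_{P^*P}\,\vpi(f)$ by the variational formula \eqref{poinc} applied to $P^*P$. That alone gives the bound $\gamma_{P^*P}$, but is wasteful: the whole point of $\widehat\gamma = (1-\pi_{\min})\gamma_{P^*P} + 2\pi_{\min}\gamma$ is that on the ``$\pi_{\min}$-fraction'' of the update one recovers the reversible-type gain $2\gamma$. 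The idea is to decompose the instantaneous energy loss when vertex $v$ rings: the loss in $\|f\|_\pi^2$ is $\pi(v)(f(v)^2 - (Pf(v))^2)$, and one writes $\|f-M\|^2$'s loss as a convex combination, isolating the diagonal-ish contribution. Concretely, $I - P^*P = (I-P^*)(I-P)/1 + \ldots$ — more precisely use the identity $\langle(I-P^*P)f,f\rangle = \|(I-P)f\|^2 + \langle (P - P^*P)f, f\rangle + \ldots$; the honest computation is $\langle (I-P^*P)f,f\rangle = 2\langle(I-\wP)f,f\rangle - \|(I-P)f\|^2 \cdot(\text{sign issue})$. Let me instead rely on: $I - P^*P = (I-\wP) + (I-\wP^*)\cdots$ no — the correct decomposition is $2(I-\wP) - (I-P^*)(I-P) \preceq I - P^*P$ after accounting for $\pi_{\min}$, and carefully bounding $\sum_v \pi(v)^2(\cdots)^2 \le \pi_{\min}\sum_v \pi(v)(\cdots)^2$, which feeds the $2\pi_{\min}\gamma$ term via \eqref{cauchy3}-type reasoning applied to $\wP$.

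Once $D(V_t) \le -\widehat\gamma V_t$ is established, the rest is routine: by the product rule \eqref{prod}, $e^{\widehat\gamma t}V_t$ is a nonnegative supermartingale, and the implication
\[
\tau_\eps > t \;\Rightarrow\; \mathrm{osc}(f_t)\ge\eps \;\Rightarrow\; \exists v:|f_t(v)-M_t|\ge\eps/2 \;\Rightarrow\; V_t \ge \tfrac{\pi_{\min}\eps^2}{4}
\]
combined with $\EE(e^{\widehat\gamma t}V_t)\le V_0 = \vpi(f_0)$ and Markov's inequality gives $\PP(\tau_\eps > t)\le e^{-\widehat\gamma t}\cdot 4\vpi(f_0)/(\eps^2\pi_{\min})$; integrating the tail yields $\EE(\tau_\eps)\le \widehat\gamma^{-1}\log(4e\,\vpi(f_0)/(\eps^2\pi_{\min}))$, which is (i). For (ii), when $P_{vv}\ge\delta$ one writes $P = \delta I + (1-\delta)Q$ for a stochastic $Q$; then $\wP = \delta I + (1-\delta)\widetilde Q$, so $\gamma \ge (1-\delta)\gamma_{\widetilde Q}$, and more usefully $P^*P \succeq$ something comparable to $\wP$ with a factor $2\delta$: expanding $P^*P = (\delta I + (1-\delta)Q^*)(\delta I + (1-\delta)Q)$ shows $I - P^*P \succeq 2\delta(1-\delta)(I-\widetilde Q) = 2\delta(I - \wP)$ (up to the $\delta^2$ cross terms, which one checks are favorable), hence $\gamma_{P^*P}\ge 2\delta\gamma$ directly, and $\widehat\gamma\ge 2\delta\gamma$. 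Plugging into (i) gives (ii).

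The main obstacle I anticipate is the algebra in the middle paragraph: getting the precise constant $\widehat\gamma=(1-\pi_{\min})\gamma_{P^*P}+2\pi_{\min}\gamma$ rather than the cruder $\min(\gamma_{P^*P}, 2\gamma)$ requires carefully splitting $D(V_t)$ so that the $\pi(v)^2$ weights (which are $\le \pi_{\min}$ times the $\pi(v)$ weights) get routed into a $\wP$-energy term estimated via \eqref{cauchy3}, while the bulk $(1-\pi_{\min})$ fraction is handled by the $P^*P$ Poincaré inequality. One has to verify the two estimates are simultaneously valid — i.e., that the same vector $f_t - M_t$ can be fed to both variational formulas without double-counting — which is where a clean operator-inequality formulation ($\,\widehat\gamma(I - \mathds{1}\pi) \preceq (I-P^*P) - \mathrm{diag}(\pi)(I - \text{stuff})\,$ relative to the $\pi$-inner product) would make the bookkeeping transparent.
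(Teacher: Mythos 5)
Your overall plan is the paper's: track $\vpi(f_t)$, prove $D(\vpi(f_t))\le -\widehat\gamma\,\vpi(f_t)$, and finish via the exponential supermartingale plus Markov's inequality and tail integration — that closing paragraph is correct. Part (ii) is also essentially the paper's argument: from $P=\delta I+(1-\delta)Q$ one gets $I-P^*P=(1-\delta)^2(I-Q^*Q)+\delta(1-\delta)(2I-Q-Q^*)$, the first summand is PSD, so $\cE_{P^*P}(f)\ge 2\delta(1-\delta)\cE_Q(f)=2\delta\cE_P(f)$ and hence $\gamma_{P^*P}\ge 2\delta\gamma$, giving $\widehat\gamma\ge 2\delta\gamma$.

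The genuine gap is the middle step for (i), which you yourself flag as not closed — and the one concrete inequality you do write there is backwards. You claim $\sum_v\pi(v)^2\bigl((Pf_t)(v)-f_t(v)\bigr)^2\le \pi_{\min}\sum_v\pi(v)\bigl(\cdot\bigr)^2$, but since $\pi(v)\ge\pi_{\min}$ the correct direction is $\ge$, and the direction matters: you need a \emph{lower} bound on $D(M_t^2)$ in order to \emph{upper}-bound $D(\vpi(f_t))=D(\|f_t\|_\pi^2)-D(M_t^2)$. The correct and short chain is: from \eqref{M2drift} and \eqref{Dsquare},
\[
D(\vpi(f_t))\le -\cE_{P^*P}(f_t)-\pi_{\min}\|(I-P)f_t\|_\pi^2\,;
\]
then invoke the exact operator identity (your ``honest computation,'' with no sign ambiguity — this is \eqref{eq:kid})
\[
\|(I-P)f\|_\pi^2=\langle(I-P^*)(I-P)f,f\rangle=2\cE_P(f)-\cE_{P^*P}(f),
\]
which turns the bound into $D(\vpi(f_t))\le(\pi_{\min}-1)\cE_{P^*P}(f_t)-2\pi_{\min}\cE_P(f_t)$. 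Now both coefficients are nonpositive, so applying the variational formula \eqref{poinc} to each energy separately — $\cE_{P^*P}(f)\ge\gamma_{P^*P}\vpi(f)$ and $\cE_P(f)=\cE_{\wP}(f)\ge\gamma\,\vpi(f)$ — gives $D(\vpi(f_t))\le-\widehat\gamma\,\vpi(f_t)$ with $\widehat\gamma=(1-\pi_{\min})\gamma_{P^*P}+2\pi_{\min}\gamma$. Your worry about ``double-counting'' when feeding $f_t-M_t$ into two Poincar\'e inequalities does not arise: after \eqref{eq:kid} the drift is bounded by a nonpositive linear combination of the two energies, and each is lower-bounded independently; no convex-splitting or operator inequality beyond \eqref{eq:kid} is needed.
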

\begin{proof}
\noindent{\bf (i)}
By Lemma \ref{lemdrift}, the empirical variance $\vpi(f_t)=\|f_t\|^2-M_t^2$ satisfies  
\begin{equation} \label{vardrift}
 D(\vpi(f_t))\le -\cE_{P^*P}(f_t)- \pi_{\min} \|(I-P)f_t\|_\pi^2  \,.
\end{equation}
For any $f:V \to \RR$,
\begin{equation} \label{eq:kid}
\|(I-P)f\|_\pi^2 =
\langle (I-P^*)(I-P)f,f \rangle=
\langle (I-P^*-P+P^*P)f,f\rangle=
2\cE_P(f)-\cE_{P^*P}(f) \,,
\end{equation}
so 
\begin{equation} \label{vardrift2}
 D(\vpi(f_t))\le (\pi_{\min}-1)\cE_{P^*P}(f_t)- 2\pi_{\min}  \cE_P(f_t) \,.
\end{equation}

Using the variational principle twice yields
\begin{equation} \label{vardrift3}
 D(\vpi(f_t))\le (\pi_{\min}-1)\cE_{P^*P}(f_t)- 2\pi_{\min} \cE_P(f) \le -\widehat{\gamma} \vpi(f_t) \,.
\end{equation}
By the product rule,
$\Lambda_t:=e^{\widehat{\gamma}t} \vpi(f_t)$ satisfies
$D(\Lambda_t) \le 0$, so $\Lambda_t$ is a supermartingale. Recall that
\begin{equation} \label{recall}
\text{osc}(f_t) \ge \eps \Rightarrow \: \exists v :\,  |f_t(v) - M_t|  \ge \eps/2 \Rightarrow \:  \vpi(f_t) \ge \frac{\pi_{\min} \eps^2}4 \,.
\end{equation}
Therefore,
$$\vpi(f_0)=\Lambda_0 \ge \EE(\Lambda_t) \ge e^{\widehat{\gamma}t} \PP(\tau_\eps>t) \frac{\pi_{\min} \eps^2}4 \,.
$$
Letting $$\hat{t}_\eps:=\frac{1}{\widehat{\gamma}}
\log \Bigr(\frac{4\cdot \vpi(f_0)}{\pi_{\min} \eps^2}\Bigl) \,,
$$
we deduce that
$$\PP(\tau_\eps>\hat{t}_\eps+t) \le e^{-\widehat{\gamma}t} \,,
$$
so 
$\EE(\tau_\eps) \le  \hat{t}_\eps+1/\widehat{\gamma}$.

\medskip

\noindent{\bf (ii)} The hypothesis that $P_{vv} \ge \delta$ for all $v$ means that $P=\delta I+(1-\delta)Q$ for some stochastic matrix $Q$. Then
$$I-P^*P=I-\bigl(\delta I+(1-\delta)Q^*\bigr)\bigl(\delta I+(1-\delta)Q\bigr)=(1-\delta)^2(I-Q^*Q)+\delta(1-\delta)\bigl(2I-Q-Q^*\bigr)\,.
$$
Therefore, for any $f:V \to \RR$, we have
\begin{equation} \label{energycalc}
  \cE_{P^*P}(f) \ge 2\delta(1-\delta)\cE_{Q}(f)=2\delta \cE_{P}(f)\,.
\end{equation}
The variational principle yields that
$\gamma_{P^*P} \ge  2\delta \gamma\,,$
whence $\widehat{\gamma} \ge 2\delta\gamma$, so the claim follows from (i).
\end{proof}
Recall that a digraph is Eulerian iff it is connected,  and for every vertex its out-degree equals its indegree.
\begin{cor} \label{cor:euler}
Let $P$ be the transition probabilities matrix of a simple random walk on an Eulerian digraph $G=(V,E)$ with $n$ vertices and $m$ edges. I.e., $P_{vw}=\text{deg}(v)^{-1}{\bf 1}_{\{(v,w) \in E\}}$. If ${\rm osc}(f_0) \le 1$, then
$$\EE(\tau_\eps)\le \displaystyle Cnm^2 \log\Bigl(\frac{e \cdot m}{\eps^2}\Bigr) \,,$$ 
where $C$ is an absolute constant. For a lazy random walk on $G$ (that can be obtained from $P$ by replacing it with $(P+I)/2$), the bound is
$$\EE(\tau_\eps)\le \displaystyle Cnm \log\Bigl(\frac{e \cdot m}{\eps^2}\Bigr) \,.$$ 
\end{cor}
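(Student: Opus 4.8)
The plan is to derive this as a direct application of Theorem~\ref{genrate}, so the work reduces to (1) identifying the relevant spectral quantities for the simple random walk on an Eulerian digraph, (2) bounding $\gamma$ (and $\gamma_{P^*P}$, $\pi_{\min}$) in terms of $n$ and $m$, and (3) bounding the logarithmic factor $\vpi(f_0)$. First I would note that for the simple random walk $P_{vw}=\deg(v)^{-1}\mathbf 1_{\{(v,w)\in E\}}$ on an Eulerian digraph, the stationary distribution is $\pi(v)=\deg(v)/m$, since $\sum_v \pi(v)P_{vw}=\sum_{v:(v,w)\in E}\deg(v)^{-1}\deg(v)/m=\text{indeg}(w)/m=\deg(w)/m=\pi(w)$ using that indegree equals outdegree. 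Hence $\pi_{\min}\ge 1/m$ (each vertex has degree at least $1$), and $\vpi(f_0)\le \|f_0\|_\pi^2\le \max_v f_0(v)^2\le 1$ after recentering so that $\max_v|f_0(v)|\le 1/2$ (permissible since $\mathrm{osc}(f_0)\le 1$ and recentering affects neither $\tau_\eps$ nor $\vpi$). So the log factor in Theorem~\ref{genrate}(i) is at most $\log(4e m/\eps^2)=\log(em/\eps^2)+\log 4$, absorbed into the constant $C$, giving the shape $\log(em/\eps^2)$.

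Next I would bound $\widehat\gamma=(1-\pi_{\min})\gamma_{P^*P}+2\pi_{\min}\gamma$ from below. The symmetrization $\wP=(P+P^*)/2$ is exactly the simple random walk on the undirected graph $\bar G$ obtained by symmetrizing $G$ (where a two-sided edge gets weight, but the conductance structure is still that of a graph with $O(m)$ edges and the same vertex set), with the same stationary measure $\pi$. By the Cheeger/resistance-type bounds recalled just before the corollary — concretely, Further Remark~1 in the SRW subsection, $\gamma\ge c/(n|E|)$ for a simple random walk on a graph with $n$ vertices — I get $\gamma \ge c/(nm)$ for $\wP$. Therefore $\widehat\gamma \ge 2\pi_{\min}\gamma \ge 2\cdot\tfrac1m\cdot\tfrac{c}{nm}=\tfrac{2c}{nm^2}$, which plugged into Theorem~\ref{genrate}(i) yields $\EE(\tau_\eps)\le \tfrac{nm^2}{2c}\log(4em/\eps^2)\le Cnm^2\log(em/\eps^2)$, as claimed. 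For the lazy walk $(P+I)/2$ one has $P_{vv}\ge 1/2$, so Theorem~\ref{genrate}(ii) applies with $\delta=1/2$: $\EE(\tau_\eps)\le \tfrac1\gamma\log(4e\vpi(f_0)/(\eps^2\pi_{\min}))$ where now $\gamma$ is the spectral gap of the symmetrization of the lazy walk, still $\gamma\ge c/(nm)$, giving $\EE(\tau_\eps)\le Cnm\log(em/\eps^2)$. One must check that laziness only changes $\gamma$ by a constant factor and that the symmetrization of the lazy walk is $(\wP+I)/2$, whose gap is half that of $\wP$; this is routine.

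The main obstacle — really the only point needing care — is justifying $\gamma\ge c/(nm)$ for the symmetrized chain. Further Remark~1 is stated for simple random walk on an \emph{undirected graph}, but $\wP$ is a reversible chain with possibly non-uniform edge weights $\pi(v)P_{vw}$; I would either invoke the general conductance bound $\gamma\ge (2\,\mathrm{diam}\cdot m)^{-1}$-type estimate via effective resistance ($\gamma^{-1}\le$ (sum of edge weights)$\cdot\max$ resistance, and $\mathcal R_{\max}\le \mathrm{diam}(\bar G)\le n$ while total weight is $1$, after suitable normalization), or observe that $\wP$ dominates $\tfrac12 \cdot(\text{SRW on }\bar G)$ in the Dirichlet-form sense so its gap is at least half the SRW gap, to which Remark~1 applies directly. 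Either route is short; I would pick the Dirichlet-form comparison since it is cleanest and the constant $c$ is anyway absorbed. A secondary small point is confirming that in passing from the directed edge set $E$ to the symmetrized graph the number of edges stays $O(m)$, which is immediate since each directed edge contributes at most one undirected edge.
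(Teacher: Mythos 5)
Your proposal is correct and follows essentially the same route as the paper: identify $\pi(v)=\deg(v)/m$, observe that $\wP$ is the simple random walk on the undirected (multi)graph obtained by forgetting orientations, invoke the $\gamma\ge c/(nm)$ bound for that walk, and plug into Theorem~\ref{genrate}(i) for the first bound and Theorem~\ref{genrate}(ii) with $\delta=1/2$ for the lazy case. The one spot where you hedge — whether $\wP$ genuinely is an SRW rather than a general weighted reversible chain — is not actually an issue: $\wP_{vw}=\frac{1}{2\deg(v)}(\mathbf 1_{(v,w)\in E}+\mathbf 1_{(w,v)\in E})$ is precisely SRW on the undirected multigraph with $m$ edges and vertex degrees $2\deg(v)$, so Remark~1 applies directly, exactly as the paper asserts.
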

\begin{proof}
Since $G$ is Eulerian, the stationary measure is given by $\pi(v)={\rm deg}(v)/m$. The time-reversal of $P$ is SRW on the $G$ with all edges reversed, and the symmetrization $\wP$ is SRW on the undirected graph obtained by ignoring the orientations of the edges. Thus $1/\gamma\le Cnm$, see, e.g., \cite{Lyons-Gharan} or \cite[Chap. 10]{Levin-Peres}. The claim now follows from Theorem \ref{genrate}.
\end{proof}

Next, we bound  the variance of the consensus $f_\infty$.  In the example of the directed cycle from the beginning of this section, if $f_0(k)=0$ for $0 \le k < n/2$ and $f_0(k)=1$ for $n/2 \le k <n$,
then $\var(f_\infty)=1/4$, and the consensus is far from the initial empirical average. The following corollary ensures this does not occur if $\pi_{\max}$ is small and every node puts a substantial  weight on its own opinion.
\begin{cor}\label{cor:varconsgen}
%Recall that $\gamma_*=1-\lambda_2(P^*P)$.  
  If $P_{vv} \ge \delta>0$ for all $v \in V$,
    then $$\var(f_\infty) \le
    \frac{\pi_{\max}\vpi(f_0)}{\delta} \,,$$
\end{cor}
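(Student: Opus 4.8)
The plan is to follow the proof of Corollary~\ref{thm:varcons1}, but since in the non-reversible case the energy $\cE_P(f_t)$ need not be a supermartingale, I would replace it by the empirical variance $\vpi(f_t)$, whose drift was bounded in the proof of Theorem~\ref{genrate}. As there, $M_t:=E_\pi(f_t)$ is a martingale with $M_\infty=f_\infty$ almost surely; since a ringing vertex replaces its opinion by a convex combination of current opinions, all the $f_t(v)$ (hence $M_t$ and $\vpi(f_t)$) remain uniformly bounded, and subtracting a constant from $f_0$ changes neither $\vpi(f_0)$ nor $\var(f_\infty)$, so I may assume $M_0=E_\pi(f_0)=0$, whence $\var(f_\infty)=\EE(M_\infty^2)$.

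The crux is to show that $M_t^2+\tfrac{\pi_{\max}}{\delta}\vpi(f_t)$ is a supermartingale, i.e.\ that $D(M_t^2)\le \tfrac{\pi_{\max}}{\delta}\bigl(-D(\vpi(f_t))\bigr)$. For this I would combine three ingredients already available: the upper bound $D(M_t^2)\le \pi_{\max}\|(I-P)f_t\|_\pi^2$ from \eqref{M2drift}; the inequality $-D(\vpi(f_t))\ge \cE_{P^*P}(f_t)$, obtained by discarding the nonnegative term in \eqref{vardrift}; and the pointwise estimate $\delta\,\|(I-P)f\|_\pi^2\le \cE_{P^*P}(f)$. This last estimate follows from the identity \eqref{eq:kid}, namely $\|(I-P)f\|_\pi^2=2\cE_P(f)-\cE_{P^*P}(f)$, together with $\cE_{P^*P}(f)\ge 2\delta\,\cE_P(f)$ from \eqref{energycalc}: indeed $2\delta\,\cE_P(f)\le (1+\delta)\cE_{P^*P}(f)$, which rearranges to $\delta\bigl(2\cE_P(f)-\cE_{P^*P}(f)\bigr)\le \cE_{P^*P}(f)$.

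With the supermartingale in hand, I would evaluate it at $t=0$, where it equals $\tfrac{\pi_{\max}}{\delta}\vpi(f_0)$, and let $t\to\infty$. By consensus, $\vpi(f_t)\to 0$ almost surely and boundedly, and $M_t^2\to M_\infty^2$ boundedly, so the bounded convergence theorem gives $\EE(M_\infty^2)\le \tfrac{\pi_{\max}}{\delta}\vpi(f_0)$; since $\EE(M_\infty)=M_0=0$, the left-hand side equals $\var(M_\infty)=\var(f_\infty)$, which is the claim. The only step requiring a short computation is the pointwise inequality $\delta\|(I-P)f\|_\pi^2\le \cE_{P^*P}(f)$; everything else is assembly of the drift identities from Lemma~\ref{lemdrift} and the proof of Theorem~\ref{genrate}. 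Finally, as in the remark after Corollary~\ref{thm:varcons1}, for random $f_0$ the same argument yields $\var(f_\infty)-\var(M_0)\le \pi_{\max}\,\EE[\vpi(f_0)]/\delta$.
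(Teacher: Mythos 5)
Your proof is correct and follows essentially the same route as the paper: the supermartingale you construct, $M_t^2+\tfrac{\pi_{\max}}{\delta}\vpi(f_t)$, is exactly $\tfrac1\delta$ times the paper's $A_t=\delta M_t^2+\pi_{\max}\vpi(f_t)$, and the drift bounds you assemble (from \eqref{M2drift}, \eqref{vardrift}, \eqref{eq:kid}, \eqref{energycalc}) are the same ones the paper chains together, merely in a slightly different order. The only cosmetic difference is that you package the energy comparison as the single pointwise inequality $\delta\|(I-P)f\|_\pi^2\le\cE_{P^*P}(f)$ rather than passing through $\cE_P(f)$ on both sides; this is a trivially looser use of \eqref{energycalc} (you only need $2\delta\cE_P\le(1+\delta)\cE_{P^*P}$, which holds since $\delta\le 1$), and it yields the same conclusion.
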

\begin{proof}
Without loss of generality, by subtracting a constant from $f_0$, we may assume that $M_0=0$.
By Lemma \ref{lemdrift} and \eqref{eq:kid},
$$
D(M_t^2) \le \pi_{\max} \|(I-P)f_t\|^2 \le 2\pi_{\max} \cE_P(f_t) \,.
$$
On the other hand, \eqref{vardrift} and \eqref{energycalc} imply that
\begin{equation} \label{vardrift4}
 D(\vpi(f_t))\le -\cE_{P^*P}(f_t) \le -2\delta \cE_{P}(f_t) \,.
\end{equation}
Therefore
$$A_t:=\delta M_t^2+\pi_{\max}\vpi(f_t) $$
is a bounded supermartingale tending in $L^1$  to a limit $A_\infty$,
whence
$$\pi_{\max}\vpi(f_0)=A_0 \ge \EE(A_\infty)
=\delta \EE(M_\infty^2)=\delta\var(f_\infty) \,.
$$
\end{proof}

\section{Fragmentation Process and Related Results}\label{sec: fragmentation}
In this section we introduce a process which is closely related to the DeGroot dynamics. 
Let $X(t)$ be a continuous time Markov chain on $V$ with independent unit Poisson clocks on the vertices and transition probabilities matrix $P$. Let $\mathcal F_t$ be the $\sigma$-algebra generated by the clock rings up to time $t$. The \emph{fragmentation process originating at $o\in V$}, $\{m_t(o,v)\in[0,1]\}_{v\in V,t\geq 0}$, is a left-continuous step process defined by
\begin{equation}\label{def:frag}
    m_t(o,v):=\mathbb{P}\big(X(t)=v \mid  \mathcal F_t,X(0)=o \big).
\end{equation}
Note that the $\{m_t(o,\cdot)\}_{o\in V}$ are all interdependent since they are all measurable w.r.t.\ the same clock rings. 

We will often abbreviate $m_t(v):=m_t(o,v)$ when $o$ is clear from the context or when we make a statement that refers to every $o\in V$. The process $\{m_t(v)\}_{v\in V,t\geq 0}$ is a step process adapted to the filtration $\{\mathcal F_t\}_{t\in\mathbb R_+}$ that satisfies the following Markovian rules:
\begin{itemize}
    \item At time 0, \[
m_0(v):=\begin{cases}
1& v=o,\\
0&v\neq o.
\end{cases}
\]
\item Suppose that the clock of vertex $v$ rings at time $t\geq 0$, then for every $u\in V$,
\[
m_{t}(u):= \mathbf{1}_{\{u\neq v\}}m_{t-}(u)+ P_{vu}m_{t}(v) .
\]
Namely, the mass of vertex $v$ is pushed to its neighbors proportionally to $v$'s row in $P$.
\end{itemize}

We now explain the relation between the DeGroot and the fragmentation processes. We first demonstrate this relation on finite $V$ and then use this relation to define the DeGroot process on infinite $V$. Suppose first that $V$ is finite. In this case, the DeGroot dynamics can be defined through the DeGroot updating rule \eqref{eq:degroot}. In what follows we identify a Poisson process with its cumulative function $N(s)$ that counts the number of rings in the time interval $[0,s]$. Suppose that the Degroot dynamics uses the Poisson processes $N_v(s)$ for $v\in V$. Let $t>0$ and define new Poisson processes $N_v^t(s):=N_v(t)-N_v(t-s)$ for $s\le t$. Consider the the fragmentation processes $m_s^{t}(o,v)$ generated from the Poisson processes $N_v^t(s)$. We have the following identity 
\begin{equation}\label{eq:m_s^t}
    f_t(o)=\sum_{v\in V} m_s^t(o,v)f_{t-s}(v).
\end{equation}
This identity clearly holds for $s=0$ and one can prove it using induction on the times of clock rings $0=s_0<s_1<s_2<\cdots$. In particular,
\begin{equation}\label{eq:degroot fragmentation}
    f_t(o)=\sum_{v\in V} m_t^t(o,v)f_{0}(v).
\end{equation}
We use Equation~\eqref{eq:degroot fragmentation} to define the DeGroot process in general ($V$ possibly infinite). 
\begin{definition}\label{def: degroot}
Let $f_0\sim \mu_0$ independently of the Poisson clocks. The DeGroot dynamics $\{f_t(v)\}_{v\in V, t\geq 0}$ is the process defined by Equation~\eqref{eq:degroot fragmentation}.
\end{definition}

% \begin{proof}
% For every $t\geq 0$, consider the coupling of the (forward-looking) fragmentation process and the (backward-looking) DeGroot process in the time interval $[0,t]$ such that whenever a clock of vertex $v$ rings in the fragmentation process at time $s$ it rings in the DeGroot process at time $t-s$. The two process are related through the following equation:
% \begin{equation}\label{eq:fragmentation-degroot}
% f_t(o)=\sum_{v\in V}m_s(v)f_{t-s}(v), \quad \forall s\in[0,t].
% \end{equation}
% To prove \eqref{eq:fragmentation-degroot}, first note that it holds for $s=0$. Since both processes, $m_s$ and $f_{t-s}$, are left-continuous step processes in $s$, it is sufficient to prove that if \eqref{eq:fragmentation-degroot} holds at $s$ then it holds at $s+$  as well, for every $s\in[0,t)$.

% Suppose the clock of vertex $v_0\in V$ rings at time $s$ in the fragmentation process (and $t-s$ in the DeGroot process) and $\sum_{v\in V}m_s(v)f_{t-s}(v)=f_t(o)$.
% We must show $\sum_{v\in V}m_{s+}(v)f_{t-(s+)}(v)=f_t(o)$.

% Indeed,
% \begin{multline}
% \sum_{v\in V}m_{s+}(v)f_{t-(s+)}(v)=\sum_{v\in V\setminus \{v_0\}}m_s(v)f_{(t-s)-}(v)+\sum_{v\in V}P_{v_0 v}m_s(v_0)f_{(t-s)-}(v)
% \\
% =\sum_{v\in V\setminus \{v_0\}}m_s(v)f_{t-s}(v)\quad +\quad m_s(v_{0})\sum_{v\in V}P_{v_0 v} f_{(t-s)-}(v)
% \\= \sum_{v\in V\setminus \{v_0\}}m_s(v)f_{t-s}(v)\quad+\quad m_s(v_{0})f_{t-s}(v_0) =\sum_{v\in V} m_s(v)f_{t-s}(v)=f_{t}(o).
% \end{multline}

% \end{proof}

We obtain several results regarding the fragmentation process. In our results,  we assume that the transition probabilities decay at least as fast the inverse square root of the time. 

\begin{assumption}[square root decay of transition probabilities]\label{assumption 1/sqrt t}
We say that a network with matrix  $P$ satisfies assumption 1 with parameter $C_0 \in (0, |V|/2)$, if for every  $k\le |V|^2$ (every  $k\in \mathbb N$ when $V$ is infinite) we have $\sup_{uv} P^k_{uv}\leq C_0 k^{-1/2}$.
\end{assumption}

\begin{remark}\label{remark:assumption}
  For a family of finite networks to satisfy this assumption, the parameter $C_0$ must be the same for all networks in the family.  

There are many interesting networks that satisfy Assumption~\ref{assumption 1/sqrt t}. One example is simple random walks on connected bounded degree, finite or infinite graphs. See, e.g., \cite[Lemma~B.0.2]{amir2021robust} or \cite[Lemma~3.4]{lyons2005asymptotic}. In fact, these lemmas apply to a more general class of reversible Markov chains.

Another class of Markov chains that satisfy Assumption~\ref{assumption 1/sqrt t} is lazy random walks on directed finite Eulerian graphs that are either regular, or have bounded degree. See, e.g., \cite[Lemma 2.4]{BPS18}.
\end{remark}

On infinite networks that satisfy Assumption~\ref{assumption 1/sqrt t}, the fragmentation process converges to zero uniformly almost surely. Theorem~\ref{thm:1} below shows that after time $t$, it is very unlikely that a fraction of the total mass concentrates on one vertex.

\begin{thm}\label{thm:1}
	There is a universal constant $\beta >0$ with the following property. Suppose that $|V|=\infty$ and Assumption~\ref{assumption 1/sqrt t} holds with parameter $C_0$. Then for every $\eps>0$, there exists $t_0=t_0(C_0,\eps )$,   such that for every vertex $o\in V$ and all $t\ge t_0$, we have
	\begin{equation}
	\mathbb P \big( \exists v, \ m_t(v) \ge \eps  \big) < \exp \big( -t^{\beta } \big) ,
	\end{equation}
    where $\{m_t(\cdot)\}$ is the fragmentation process originating at $o$.
\end{thm}

The exponent $\beta $ we obtain in the proof is far from optimal and in fact we conjecture that the following holds:

\begin{conjecture}
For any $\eps >0$ there exists $c_{\eps }>0$ such that $\ \mathbb P (\exists v, \ m_t(v) \ge \eps  ) < e^{-c_{\eps }t}$.
\end{conjecture}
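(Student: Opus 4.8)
\noindent\emph{A proposed approach to the conjecture.} This is a conjecture rather than an established result; we outline the route that seems most natural, which is essentially the one behind the weaker bound in Theorem~\ref{thm:1} pushed as far as it appears to go, and we indicate where the difficulty lies.

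\smallskip

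\noindent\emph{Step 1 (reduction to high moments via coupled walks).} Fix $o\in V$ and write $m_t(v)=m_t(o,v)$. For every integer $k\ge1$ we have $\max_v m_t(v)^{2k}\le\sum_v m_t(v)^{2k}$, so Markov's inequality gives
\[
\PP\big(\exists v,\ m_t(v)\ge\eps\big)\le\eps^{-2k}\,\EE\Big[\sum_v m_t(v)^{2k}\Big].
\]
Now let $X^{(1)},\dots,X^{(2k)}$ be copies of the chain started at $o$, all driven by the common Poisson clocks generating $\cF_t$ and conditionally i.i.d.\ given $\cF_t$ (two of them jump simultaneously only while sharing a vertex). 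By the definition \eqref{def:frag} of the fragmentation process, $\PP(X^{(1)}_t=\cdots=X^{(2k)}_t\mid\cF_t)=\sum_v m_t(v)^{2k}$, hence
\[
\EE\Big[\sum_v m_t(v)^{2k}\Big]=\PP\big(X^{(1)}_t=\cdots=X^{(2k)}_t\big).
\]
So it suffices to produce $\rho(\eps)<1$ and $c>0$ with $\PP(X^{(1)}_t=\cdots=X^{(2k)}_t)\le\rho(\eps)^{\,k}$ for all $k\le c\,t$ and all large $t$; taking $k=\lceil c\,t\rceil$ then yields the claimed exponential bound (for $t\ge t_0(\eps)$, which is all that is required).

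\smallskip

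\noindent\emph{Step 2 (break-up and reunion estimates for the coupled walks).} The $2k$ walks form a coalescent-type system: they start as one cluster, repeatedly split, occasionally re-merge, and must all be reunited at time $t$. For the chains of interest (simple random walks on bounded-degree graphs, lazy walks on bounded-degree Eulerian digraphs) two walks at a common vertex $v$ separate on the next ring of $v$'s clock with probability $1-\sum_u P_{vu}^2\ge c_0>0$, so any cluster of size $\ge2$ breaks within a geometric, hence $O(1)$, amount of time. Conversely, once a pair has separated it runs on essentially independent clock streams, and Assumption~\ref{assumption 1/sqrt t}, transferred to the continuous-time chain (where the number of rings by time $t$ is $\mathrm{Poisson}(t)$ and concentrates around $t$), gives $\PP(X^{(i)}_t=X^{(j)}_t\mid\text{separated by time }t/2)\le C_1 t^{-1/2}$. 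Combining the two, forcing all $2k$ walks back together at time $t$ requires of order $k$ independent-looking ``reunion'' events, each of probability $O(t^{-1/2})$, times a combinatorial factor for the cluster-merging pattern; if these reunions could genuinely be decoupled, one would obtain $\EE[\sum_v m_t(v)^{2k}]\le(C_2 t^{-1/2})^{\,k}$, and Step~1 with $t\gtrsim\eps^{-4}$ and $k\asymp t$ would finish the proof.

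\smallskip

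\noindent\emph{Main obstacle.} The hard part is exactly the decoupling in Step~2 \emph{when $k$ is comparable to $t$}. Since all $2k$ walks share clocks, clusters can pile up and re-merge, so the reunion events are only approximately independent; an honest accounting of these correlations --- together with the number of admissible coalescent histories, which grows faster than exponentially in $k$ --- seems to restrict $k$ to a sublinear range $k\lesssim t^{\beta}$ with $\beta<1$, which is precisely what yields the stretched-exponential bound $e^{-t^{\beta}}$ of Theorem~\ref{thm:1} rather than $e^{-c_\eps t}$. Removing this restriction appears to require a genuine \emph{renewal} structure: random times $0<\sigma_1<\sigma_2<\cdots$ with $\EE[\sigma_{i+1}-\sigma_i]=O(1)$ at which the profile $m_{\sigma_i}$ is quantitatively ``spread out'' (say $\sum_v m_{\sigma_i}(v)^2$ below a fixed threshold) and from which the dynamics restarts, so that the number of ``bad'' blocks before time $t$ has a true exponential tail; or else sharper graph-specific heat-kernel input (Gaussian-type estimates on $\ZZ^d$ and on graphs of polynomial growth). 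A complementary route is to treat $S_t:=\sum_v m_t(v)^2$ directly as a Lyapunov functional: a computation of its drift (in the spirit of Lemma~\ref{lemdrift}) shows it is negative and of order $-S_t^{3/2}$ away from equilibrium, and the conjecture would follow from an exponential concentration estimate for the fluctuations of $S_t$ around this deterministic decay.
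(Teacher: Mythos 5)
You have not proved the statement, and neither does the paper: this is stated as an open conjecture, and the paper's own result in this direction is only the weaker Theorem~\ref{thm:1}, which gives the stretched-exponential bound $e^{-t^{\beta}}$ by combining Claim~\ref{claim:1} (the identity $\EE[\sum_v m_t(v)^d]=\PP(X_1(t)=\cdots=X_d(t))$ for walks sharing the Poisson clocks) with Propositions~\ref{prop:fragmentation infinite-graph second moment} and~\ref{prop:fragmentation infinite-graph d-th moment} and Markov's inequality at $d=t^{\alpha/10}$. Your Step~1 is exactly this mechanism, and your diagnosis of the bottleneck is accurate: Proposition~\ref{prop:fragmentation infinite-graph d-th moment} is only valid for $d\le t^{\alpha/10}$, i.e.\ the moment index is confined to a sublinear range in $t$, which is precisely why the method tops out at $e^{-t^{\beta}}$ rather than $e^{-c_\eps t}$. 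So as a status report your proposal is correct and faithful to the paper's approach, but it contains no new argument that closes the gap.

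The genuine gap is the one you yourself flag in Step~2: the claim that forcing all $2k$ walks back together costs $(C_2t^{-1/2})^{k}$ ``if the reunions could be decoupled'' is exactly the unproven step, and nothing in your sketch supplies the decoupling when $k\asymp t$. The paper's proof of Proposition~\ref{prop:fragmentation infinite-graph d-th moment} controls correlations by conditioning on the discrete trajectories and bounding the overlap times $|A_{i,j}\cap[0,t]|$ (Lemma~\ref{lem:4}) plus a Freedman-inequality comparison of the shared clocks with independent ones; all of these estimates degrade as $d$ grows and genuinely require $d\lesssim t^{\alpha/10}$, so simply ``pushing $k$ up to $ct$'' is not a matter of bookkeeping but would need a new idea (your renewal-time proposal, or a verified Lyapunov/concentration estimate for $S_t=\sum_v m_t(v)^2$, whose claimed drift of order $-S_t^{3/2}$ you assert without proof and which in any case would still need an exponential concentration statement that is not available). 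In short: the outline is a reasonable research program consistent with the paper, but the statement remains unproved, and you should present it as such rather than as a proof attempt.
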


The next three propositions will be used in our proofs of the theorems. These propositions provide bounds on certain moments of the fragmentation process. The proofs of the Propositions are given in Section~\ref{sec:walks}. 

The first proposition bounds the second moment in infinite networks from above.
\begin{prop}\label{prop:fragmentation infinite-graph second moment}
Let $m_t(v)$ be the fragmentation process originating at an arbitrary vertex. Under Assumption~\ref{assumption 1/sqrt t}, there exists a universal constant $\alpha >0$ and a real number $t_0=t_0(C_0)$ such that 
\[
\mathbb E\left[\sum_{v\in V}(m_t(v))^2\right]\leq t^{-\alpha},
\]
for every $t\in (t_0,|V|^2/2)$, where $V$ may be either finite or infinite. 
\end{prop}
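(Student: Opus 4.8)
The plan is to track the quantity $S_t := \sum_{v \in V} m_t(v)^2$ and show it decays like a negative power of $t$ in expectation. Note that $S_t$ admits a natural probabilistic interpretation: if $X(t), Y(t)$ are two independent copies of the continuous-time chain with transition matrix $P$, both started at $o$ and \emph{driven by the same Poisson clocks} (so that they coalesce whenever they are both at a vertex whose clock rings and both follow... — actually they need independent coin flips but shared clocks), then $\EE[S_t \mid \mathcal F_t] = \EE[S_t]$ computation unfolds as follows. Conditioning on the clock rings $\mathcal F_t$, $m_t(o,v) = \PP(X(t)=v \mid \mathcal F_t)$, so $S_t = \sum_v \PP(X(t)=v\mid\mathcal F_t)^2 = \PP(X(t)=Y(t)\mid \mathcal F_t)$ where $X,Y$ are two conditionally-independent copies given the clocks. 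Hence $\EE[S_t] = \PP(X(t)=Y(t))$ for the \emph{annealed} pair of walks that share the Poisson clocks on vertices but use independent transition randomness. This is exactly the ``coupled random walks'' object advertised in the abstract, so I expect Section~\ref{sec:walks} to supply the needed estimate; the proposition here should reduce to it.

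First I would set up the coupled pair $(X_t, Y_t)$: attach to each vertex $v$ a Poisson clock of rate $1$; when it rings, \emph{both} walkers currently at $v$ (if any) jump, each according to an independent sample from $P_{v,\cdot}$. Then the identity $\EE[S_t] = \PP(X_t = Y_t)$ holds. Second, I would bound $\PP(X_t=Y_t)$. The key point is that the two walkers move ``together'' only in a limited way: when both sit at the same vertex $v$ and its clock rings, they jump to the same vertex only with probability $\sum_w P_{vw}^2 \le \sup_w P_{vw} \le C_0$ (by Assumption~\ref{assumption 1/sqrt t} with $k=1$, this is at most $C_0$, which is useful only if $C_0<1$; more robustly, one uses that the difference process has a genuine chance to separate). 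The cleaner route: compare $\PP(X_t = Y_t)$ to the return probability of a single walk. Since the clocks are shared, conditionally on the realized jump \emph{times} $0 < s_1 < s_2 < \cdots$ at the various vertices, each walker performs a discrete-time walk, and the heat-kernel bound $\sup_{u,v}P^k_{uv} \le C_0 k^{-1/2}$ controls how concentrated $X$ can be. Precisely, $\PP(X_t = Y_t) = \sum_v \EE\big[ \PP(X_t = v\mid \text{times})\,\PP(Y_t=v\mid\text{times})\big] \le \EE\big[\sup_v \PP(X_t=v\mid\text{times})\big] \le C_0\, \EE\big[N_t^{-1/2}\big]$, where $N_t$ is (at least) the number of steps the single walk $X$ has taken by time $t$ — one must be careful that this is the number of rings at the \emph{current location trajectory}, which is a rate-$1$ Poisson process, so $N_t \sim \mathrm{Poisson}(t)$. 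Then $\EE[N_t^{-1/2}] \le C t^{-1/2}$ for $t \ge t_0$ by a standard Poisson concentration estimate (Chernoff bound: $\PP(N_t < t/2) \le e^{-ct}$, and $N_t^{-1/2}\le 1$ always), giving $\EE[S_t] \le C_0 C t^{-1/2}$, i.e., $\alpha = 1/2$ works up to constants — and then absorb the constant into the power to get the clean $t^{-\alpha}$ for $t$ large, say $\alpha$ slightly below $1/2$, or keep $C_0 C t^{-1/2} \le t^{-\alpha}$ for $t \ge t_0(C_0)$.

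The one subtlety — and the step I expect to be the main obstacle — is the claim ``$N_t$ = number of rings encountered by a single walker $\sim \mathrm{Poisson}(t)$ and is independent of the walk's discrete-time trajectory in the way needed.'' The issue is that the walker's position determines \emph{which} clock it next waits for, but by the memorylessness of independent rate-$1$ Poisson clocks, the waiting time to the next ring \emph{at the walker's location} is always $\mathrm{Exp}(1)$ regardless of history, so the total number of moves of the single walk $X$ by time $t$ is genuinely $\mathrm{Poisson}(t)$, and conditionally on that number $k$ the walk is a $k$-step discrete-time chain for which $\PP(X_t = v) = P^k_{ox}$; apply Assumption~\ref{assumption 1/sqrt t}. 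For the \emph{pair} $(X,Y)$ sharing clocks this needs slightly more care because the two walkers may be at different vertices and hence the relevant clock-ring counts are correlated; but the bound $\PP(X_t=Y_t)\le \EE[\sup_v \PP(X_t = v \mid \mathcal G)]$, where $\mathcal G$ is the $\sigma$-algebra of all clock rings, only uses the marginal law of $X$, since given $\mathcal G$ the walk $Y$ is a probability distribution and $\sum_v \PP(X=v\mid\mathcal G)\PP(Y=v\mid\mathcal G) \le \max_v \PP(X=v\mid\mathcal G)$. So only the marginal Poisson count for $X$ matters, and the obstacle dissolves. Finally I would note the restriction $t < |V|^2/2$ is exactly where Assumption~\ref{assumption 1/sqrt t} is guaranteed (it is stated for $k \le |V|^2$, and $N_t \le |V|^2$ with overwhelming probability when $t < |V|^2/2$, by the Poisson tail bound), which is why the conclusion is stated on $(t_0, |V|^2/2)$; I would make that truncation explicit when bounding $\EE[N_t^{-1/2}\mathbf 1_{N_t \le |V|^2}]$ versus the negligible event $\{N_t > |V|^2\}$.
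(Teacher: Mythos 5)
There is a genuine gap, and it sits exactly where you flagged the ``one subtlety.'' Your reduction of $\EE[\sum_v m_t(v)^2]$ to $\PP(X_t=Y_t)$ for two walks with shared clocks is correct and is indeed the paper's first step (Claim~\ref{claim:1}), and the Cauchy--Schwarz bound $\sum_v \PP(X_t=v\mid\mathcal G)\,\PP(Y_t=v\mid\mathcal G) \le \sup_v \PP(X_t=v\mid\mathcal G)$ is fine. But the next step, $\EE\bigl[\sup_v \PP(X_t=v\mid\mathcal G)\bigr]\le C_0\,\EE\bigl[N_t^{-1/2}\bigr]$, is asserted without justification and does not follow from Assumption~\ref{assumption 1/sqrt t}. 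The heat-kernel bound $\sup_{u,v}P^k_{uv}\le C_0 k^{-1/2}$ controls the \emph{unconditional} law of the $k$-step discrete walk; it says nothing about the law of $X(t)$ \emph{conditioned on the clock realization} $\mathcal G$. Given $\mathcal G$, the conditional law $\PP(X_t=\cdot\mid\mathcal G)$ is exactly the fragmentation measure $m_t(\cdot)$, and this measure can re-concentrate: e.g.\ on a path or cycle, a single directed sweep of rings leaves half the mass stuck at a leaf-like vertex forever, so $\sup_v m_t(v)$ stays of order $1$ on a positive-probability (indeed sub-exponentially decaying) set of clock realizations. Moreover the quantity ``$N_t=$ number of rings along $X$'s trajectory'' is a function of the trajectory, not of $\mathcal G$ alone, so the displayed inequality does not even type-check as a conditional statement. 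In short, the bound you need on $\EE[\sup_v m_t(v)]$ is essentially equivalent in difficulty to the proposition itself, and the argument ``only the marginal Poisson count for $X$ matters, so the obstacle dissolves'' is where the correlation induced by the shared clocks is silently dropped — which is precisely what must not happen.

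The paper's actual proof conditions on the \emph{discrete trajectories} $\hat X_1,\hat X_2$, not on the clocks. Given the trajectories, the pair $(N_1(t),N_2(t))$ of step counts is a continuous-time process whose difference $S(n)=N_1(t_n)-N_2(t_n)$ is a martingale with increments in $\{0,\pm1\}$; the paper shows that with high probability the trajectories are ``good'' (Lemma~\ref{lem: good k steps}, Lemma~\ref{lem: weak good trajectories}), i.e.\ the two walkers spend a constant fraction of steps at distinct vertices, which gives a conditional variance lower bound for $S$. It then applies a martingale anti-concentration estimate (Corollary~\ref{cor: bounded variance martingale}) to show $S(\tau(n))$ is spread over $\sim t^{\alpha}$ values, and a local-CLT-type bound on $N_1(t)+N_2(t)$ (Lemma~\ref{lem:3}) to get the extra $t^{-1/2}$ factor; combining these with the heat-kernel bound applied to the \emph{unconditional} trajectories gives $\PP(X_1(t)=X_2(t))\le t^{-\alpha}$, and a bootstrap (Lemma~\ref{lem: any c}, Lemma~\ref{lem: strong prop seoncd moment}) makes $\alpha$ universal. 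None of this apparatus is optional: it is exactly what replaces the unjustified inequality in your sketch. If you want to pursue your route, you would need a proof that $\EE[\sup_v m_t(v)]$ decays polynomially in $t$ — but note that Theorem~\ref{thm:1}, the natural such statement, is itself derived from Propositions~\ref{prop:fragmentation infinite-graph second moment} and~\ref{prop:fragmentation infinite-graph d-th moment}, so invoking it here would be circular.
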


Note that the constant $\alpha$ cannot be greater than $\frac 1 2$, since if $P$ is the transition matrix of a symmetric random walk on the line graph $\mathbb Z$, then \[
\mathbb E\left[\sum_{v\in V}(m_t(v))^2\right]\geq \sum_{v\in V}\left[\mathbb Em_t(v)\right]^2=\sum_{v\in V}\left[P^{(t)}_{ov}\right]^2=\Theta(t^{-1/2}),
\]
where $P^{(t)}$ is the transition probabilities matrix of the continuous time symmetric random walk.

Our proof of Proposition~\ref{prop:fragmentation infinite-graph second moment} provides a certain $0<\alpha<\frac 1 2$. In Remark~\ref{remark: imporving alpha}, we explain why we conjecture that the proposition should hold with $\alpha$ arbitrarily close to $\frac 1 2$. 

The next proposition studies the case of reversible chains. Under this assumption, we obtain the improved (and optimal) bound of $O(t^{-1/2})$.
\begin{prop}\label{prop:fragmentation random-walk second moment}
Let $V$ be a (finite or infinite) network satisfying Assumption~\ref{assumption 1/sqrt t}. Suppose in addition that $P$ is the transition matrix of a reversible Markov chain on $V$ with stationary measure $\pi$. Let $\Delta=\sup\limits_{v,u\in V}\pi_v/\pi_u$ and let $m_t(v)$ be the fragmentation process originating at an arbitrary vertex. Then, there exists a constant $C$ depending on $C_0$ and $\Delta $ such that for all $t\le |V|^2/3$ we have
\[
\mathbb E\left[\sum_{v\in V}(m_t(v))^2\right]\leq  C t^{-1/2}.
\]
\end{prop}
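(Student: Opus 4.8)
The plan is to express the quantity $\EE[\sum_v m_t(v)^2]$ in terms of a pair of coupled Markov chains and then reduce the estimate to a single–chain return probability using reversibility. First I would observe the probabilistic meaning of the second moment: if $X(t)$ and $Y(t)$ are two walks that both start at $o$, use independent Poisson clocks among themselves but are \emph{coupled through the same fragmentation clocks} — that is, given the clock rings $\mathcal F_t$, $X$ and $Y$ are conditionally independent with law $m_t(o,\cdot)$ — then
\[
\EE\Bigl[\sum_{v\in V} m_t(o,v)^2\Bigr]=\EE\Bigl[\PP\bigl(X(t)=Y(t)\mid \mathcal F_t\bigr)\Bigr]=\PP\bigl(X(t)=Y(t)\bigr).
\]
So it suffices to bound the collision probability of this particular coupling of two copies of the chain started from the same vertex. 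The key point is that this is \emph{not} the product chain: $X$ and $Y$ share the update clocks, so when a vertex $w$ rings, every copy currently at $w$ is resampled simultaneously (and independently) according to $P_{w\cdot}$. This is exactly the structure of the coupled random walks analysed in Section~\ref{sec:walks}, so I would expect to invoke the relevant lemma from there; alternatively one can derive the bound from scratch as sketched below.

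Next I would reduce the collision probability to a one–walk estimate via reversibility. Consider the difference/meeting structure: write $p_t := \PP(X(t)=Y(t))$. Condition on the clock rings; between rings nothing moves, and at a ring at vertex $w$ the pair can only collide or separate at $w$. A clean way to handle this is to note that, because the clocks driving the fragmentation are shared, the pair process $(X(t),Y(t))$ on the diagonal behaves like a single walk: once $X$ and $Y$ are both at $w$ and $w$ rings, they jump to the same vertex with probability $\sum_u P_{wu}^2$. The cleanest route is the identity
\[
\PP\bigl(X(t)=Y(t)\bigr)=\EE\Bigl[\sum_v m_t(o,v)^2\Bigr]\le \max_{u}\EE\Bigl[m_t(u,u)\Bigr]\cdot(\text{something})
\]
— but rather than chase that, I would instead use the monotonicity/coupling input from the earlier propositions together with the spectral bound. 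Concretely, since $P$ is reversible with stationary measure $\pi$ and $\Delta=\sup \pi_v/\pi_u<\infty$, and since Assumption~\ref{assumption 1/sqrt t} gives $\sup_{u,v}P^k_{uv}\le C_0k^{-1/2}$ for $k\le |V|^2$, the heat–kernel bound in continuous time reads $\sup_{u,v}P^{(s)}_{uv}\le C'(C_0)\,s^{-1/2}$ for $s\le |V|^2/2$ or so (transferring the discrete bound to the Poissonised chain costs only a constant, via a Chernoff estimate on the number of rings, which accounts for the slightly smaller range $t\le |V|^2/3$). Then
\[
\sum_v\bigl(\EE\,m_t(o,v)\bigr)^2=\sum_v\bigl(P^{(t)}_{ov}\bigr)^2=P^{(2t)}_{oo}\cdot\frac{1}{\pi_o}\cdot\pi_o\le \Delta\cdot P^{(2t)}_{oo}\le \Delta\, C'\,(2t)^{-1/2}
\]
using reversibility ($\sum_v P^{(t)}_{ov}P^{(t)}_{ov}=\sum_v P^{(t)}_{ov}\frac{\pi_v}{\pi_o}P^{(t)}_{vo}\le\Delta\,P^{(2t)}_{oo}$). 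This handles the ``independent'' part; the genuine work is bounding the \emph{excess} of the true coupled collision probability over this independent lower bound.

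The main obstacle — and the step I would spend the most care on — is controlling the extra collisions created by the shared clocks. When $X$ and $Y$ happen to sit at the same vertex $w$ and $w$'s clock rings, they re-collide with probability $\sum_u P_{wu}^2\le \sup_{w,u}P_{wu}\le C_0$, which does not decay, so naively the pair could keep re-colliding. The resolution is that to be at the same vertex at a ring time is already a rare event whose probability is governed by the heat kernel: I would set up a renewal/last-collision decomposition,
\[
\PP\bigl(X(t)=Y(t)\bigr)=\PP\bigl(X,Y\text{ never at a common ringing vertex by }t\bigr)+\sum(\text{last common ring at time }s\le t),
\]
bound the ``no shared update'' term by the independent estimate above (when no shared vertex rings while both are present, $X$ and $Y$ evolve with disjoint clocks hence independently, so this term is $\le \Delta C'(2t)^{-1/2}$), and for the renewal term integrate the product (density of a shared collision ring at time $s$) $\times$ (probability of being at a common vertex at time $s$, which is again $\le$ heat kernel at time $s/2$ times $\Delta$) $\times$ (probability of surviving from $s$ to $t$). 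The convolution $\int_{t/2}^{t}s^{-1/2}(t-s)^{-1/2}\diff s$ is $O(1)$ and $\int_{0}^{t/2}s^{-1/2}\cdot(\text{tail from }s)\diff s$ is handled by the decay of the survival probability (or simply by $\int_0^{t/2}$ being dominated because the heat kernel at time $(t-s)/2\ge t/4$ is $O(t^{-1/2})$). Summing geometrically over the number of shared-update collisions (each costs a factor $\le C_0$ but comes with an extra small collision probability) gives a convergent series and the bound $Ct^{-1/2}$ with $C=C(C_0,\Delta)$. I would first prove the cleaner independent lower bound, then do this renewal bound, checking the time range $t\le |V|^2/3$ is what is needed for the Poissonisation transfer of Assumption~\ref{assumption 1/sqrt t}.
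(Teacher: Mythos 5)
Your reduction to the collision probability $\PP(X(t)=Y(t))$ of two walks sharing the Poisson clocks is the same as the paper's (Claim~\ref{claim:1}), and the reversibility identity $\sum_v (P^{(t)}_{ov})^2 \le \Delta\, P^{(2t)}_{oo}$ is correct as far as it goes. From there, however, you take a genuinely different route than the paper, and your route has real gaps as written.

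The paper does \emph{not} do a renewal decomposition over shared rings. Instead it conditions on the entire pair of discrete trajectories, concatenates the path of $X_1$ with the reversed path of $X_2$ to form a single closed $n$-step path $p$ from $o$ to $o$, and uses reversibility to compare $\PP(\mathcal A_1(q))\,\PP(\mathcal A_2(q'))$ with $\Delta\,\PP(\mathcal A_1(p))$ so that Assumption~\ref{assumption 1/sqrt t} can be applied to $P^n_{oo}$. The subtle part — the dependence between the two walks through the shared clocks — is isolated into the single quantity $\PP\bigl(N_1(t)+N_2(t)=n \mid \mathcal Z_\infty\bigr)$, which is then handled by a local-CLT-type estimate (Lemma~\ref{lem:3}) that holds uniformly over trajectories. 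This neatly avoids ever having to track ``re-collisions'' explicitly.

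Your renewal decomposition would in principle need a Volterra-type recursion $p_t \le (\text{indep. term}) + \int_0^t p_s\,K(t-s)\,ds$, but as sketched it has two concrete problems. First, you plug the heat-kernel estimate $s^{-1/2}$ in as an \emph{upper} bound for ``probability of being at a common vertex at time $s$'', but $\sum_v (P^{(s)}_{ov})^2 \le \PP(X(s)=Y(s))$ by Jensen — it is a lower bound, which is exactly why you call it the ``independent lower bound'' a few lines earlier. You cannot use it on both sides; you would have to carry $p_s$ itself through the recursion and then solve the inequality. Second, and more importantly, the convolution you display, $\int_{t/2}^t s^{-1/2}(t-s)^{-1/2}\,ds$, is $\Theta(1)$, not $O(t^{-1/2})$, so the renewal term as written does not give the claimed bound. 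To make the renewal argument close, the kernel attached to the last shared ring cannot be the heat kernel $(t-s)^{-1/2}$; it has to be the first-meeting density of two independent walks together with the survival factor $e^{-(t-s')}$ for the meeting vertex not to ring — and that density is not controlled by Assumption~\ref{assumption 1/sqrt t} alone (the assumption gives only an upper bound on $P^k_{uv}$, not the lower bound or the smoothness you would need to get a clean $(t-s)^{-3/2}$-type tail). There is also a minor but telling slip in the ``no shared update'' term: since both walks start glued at $o$, ``no co-ring in $[0,t]$'' forces $o$ never to ring, so that term is $e^{-t}$, not $\Delta C' t^{-1/2}$; the walks do not ``evolve with disjoint clocks'' from time $0$.

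In short: same starting point (Claim~\ref{claim:1}, reversibility, Assumption~\ref{assumption 1/sqrt t}), but the core estimate is done in a fundamentally different and — as presented — incomplete way. Fixing your renewal argument would require a recursion plus control of the first-meeting density, which Assumption~\ref{assumption 1/sqrt t} does not obviously supply. The paper's path-concatenation argument with Lemma~\ref{lem:3} avoids all of this and is the intended route.
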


The third proposition refers to higher moments of the fragmentation process.

\begin{prop}\label{prop:fragmentation infinite-graph d-th moment}
Let $0<\alpha <1$ and $C_1>1$. Suppose that $V$ is either finite or infinite and that the network satisfies Assumption~\ref{assumption 1/sqrt t}. In addition, suppose that the network has the property that the fragmentation process originating at any vertex satisfies 
\[
\mathbb E\left[\sum_{v\in V}(m_t(v))^2\right]\leq  C_1 t^{-\alpha }
\]
for all $t\le |V|^2/3$. Then, there exists $t_0=t_0(C_0,C_1,\alpha )$ such that for all $d\ge 2$ and all $\max (t_0,d^{10/\alpha }) \le t \le |V|^2/3$ we have  
\[
\mathbb E\left[\sum_{v\in V}(m_t(v))^d\right]\leq 
t^{-\alpha d/10}.
\]
\end{prop}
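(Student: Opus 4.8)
The plan is to bootstrap from the second-moment bound to the $d$-th moment bound by a two-step interpolation: first control the \emph{maximum} mass $\max_v m_t(v)$ using the semigroup property and the second-moment hypothesis, then combine this sup-bound with the second moment to dominate the $d$-th power sum. First I would write, for any $t' < t$, the conditional decomposition coming from running the fragmentation process from time $t'$ to time $t$: conditionally on $\mathcal F_{t'}$, the mass $m_t(v)$ is obtained by pushing each atom $m_{t'}(u)$ forward, and taking expectations gives $\mathbb E[m_t(v)\mid \mathcal F_{t'}] = \sum_u m_{t'}(u) P^{(t-t')}_{uv}$ in the continuous-time sense (or the analogous discrete statement conditioning on clock rings). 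Using Assumption~\ref{assumption 1/sqrt t} to bound the relevant transition probabilities by $C_0 (t-t')^{-1/2}$, Cauchy--Schwarz yields $\mathbb E[m_t(v)\mid\mathcal F_{t'}]^2 \le \big(\sum_u m_{t'}(u)^2\big)\big(\sum_u (P^{(t-t')}_{uv})^2\big) \lesssim (t-t')^{-1/2}\sum_u m_{t'}(u)^2$ after another application of the decay bound; taking expectations and invoking the hypothesis $\mathbb E\sum_u m_{t'}(u)^2 \le C_1 (t')^{-\alpha}$ shows $\mathbb E\,m_t(v)^2$ is small, and a union bound over $v$ (the number of relevant $v$ is at most polynomial in $t$ because the process started from a single atom has spread for time $t \le |V|^2/3$, or one truncates the negligible tail) gives that with high probability $\max_v m_t(v) \le t^{-\rho}$ for a suitable $\rho = \rho(\alpha) > 0$.

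Given the sup-bound, the $d$-th moment is immediate: on the high-probability event $\{\max_v m_t(v)\le t^{-\rho}\}$ we have $\sum_v m_t(v)^d \le (\max_v m_t(v))^{d-2}\sum_v m_t(v)^2 \le t^{-\rho(d-2)}\sum_v m_t(v)^2$, so taking expectations and splitting on this event (using that $\sum_v m_t(v)^d \le \sum_v m_t(v) = 1$ always, so the complementary event contributes at most its probability, which is super-polynomially small once $t \ge d^{10/\alpha}$) produces a bound of the shape $t^{-\rho(d-2)}\cdot C_1 t^{-\alpha} + (\text{tiny})$. The constraint $t \ge d^{10/\alpha}$ is exactly what is needed to absorb the constants $C_1$ and the combinatorial/union-bound factors and to convert an exponent like $\rho(d-2)+\alpha$ into the clean claimed exponent $\alpha d/10$; I would choose $\rho$ comfortably larger than $\alpha/10$ (the second-moment decay gives room, since $\alpha < 1$) so that all the slack goes in the right direction.

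The main obstacle is making the passage from second moment to sup-norm quantitatively honest: the naive union bound over $v \in V$ fails when $V$ is infinite, so one must argue that only polynomially-in-$t$ many vertices carry non-negligible mass at time $t$ — either by a crude deterministic bound on the support growth rate of the fragmentation process (each clock ring moves mass only along edges, so after $N_t$ total rings the support has size controlled by the degree bound, and $N_t$ concentrates around $|V_{\text{active}}|\,t$), or by first showing $\mathbb E\sum_{v}\mathbf 1_{\{m_t(v)\ge t^{-\rho}\}}m_t(v)$ is small via Markov and Proposition~\ref{prop:fragmentation infinite-graph second moment} and handling the contribution of all such $v$ at once rather than individually. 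A secondary technical point is that the cleanest version of "push mass forward from time $t'$" works with clock rings rather than the continuous-time kernel, so I would phrase the Cauchy--Schwarz step conditionally on $\mathcal F_{t-s}$ using the backward-fragmentation representation \eqref{eq:m_s^t} analogue for $m$, where $m_s^t$ plays the role of the forward kernel and has the same distribution as $m_s$ — this keeps everything within the framework already set up and lets Assumption~\ref{assumption 1/sqrt t} be applied to $\mathbb E[m_s^t(u,v)] = P^{(s)}_{uv}$ directly. Once these bookkeeping issues are settled, choosing $s = t/2$ balances the $(t-t')^{-1/2}$ gain against the $(t')^{-\alpha}$ input and the rest is arithmetic with the exponents.
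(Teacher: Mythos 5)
Your approach differs fundamentally from the paper's, which goes through the coupled-random-walks representation (Claim~\ref{claim:1}) and a careful decoupling of the discrete trajectories from the shared Poisson clocks (Lemmas~\ref{lem:4} and~\ref{lem:poisson clocks}). Unfortunately, the proposal as written has two genuine gaps, either of which is fatal.

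First, the Cauchy--Schwarz step controls the wrong quantity. You bound $\bigl(\mathbb E[m_t(v)\mid\mathcal F_{t'}]\bigr)^2$ by $\bigl(\sum_u m_{t'}(u)^2\bigr)\bigl(\sum_u (P^{(t-t')}_{uv})^2\bigr)$ and then assert this ``shows $\mathbb E\,m_t(v)^2$ is small.'' But Jensen gives $\bigl(\mathbb E[m_t(v)\mid\mathcal F_{t'}]\bigr)^2 \le \mathbb E[m_t(v)^2\mid\mathcal F_{t'}]$, so an upper bound on the left yields no bound on $\mathbb E[m_t(v)^2]$. The conditional second moment involves the joint quantity $\mathbb E[\tilde m_{t-t'}(u_1,v)\,\tilde m_{t-t'}(u_2,v)]$, which, because the pushes from $u_1$ and $u_2$ share the same clocks, does not factor as $P^{(t-t')}_{u_1 v}P^{(t-t')}_{u_2 v}$; estimating that correlation is precisely the difficulty the proposition is built to handle, so the conditional-expectation shortcut sidesteps the problem rather than solving it.

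Second, even if one granted a polynomial bound on $\mathbb E[m_t(v)^2]$, the tail on $\max_v m_t(v)$ obtainable by Markov plus a union bound over polynomially-in-$t$ many vertices is only polynomially small in $t$. But the target $\mathbb E\sum_v m_t(v)^d\le t^{-\alpha d/10}$ must hold for all $d$ up to $t^{\alpha/10}$, so the right-hand side can be as small as $\exp(-c\,t^{\alpha/10}\log t)$. In your split, the complementary event $\{\max_v m_t(v)>t^{-\rho}\}$ must therefore have super-polynomially small probability. That super-polynomial tail is precisely Theorem~\ref{thm:1}, which the paper proves \emph{as a consequence of} this proposition; invoking it here is circular, and one cannot reach it via the second moment alone. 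The idea you are missing is the one the paper exploits: reduce to $\mathbb P(X_1(t)=\cdots=X_d(t))$ for $d$ walks sharing clocks, show that good trajectories (all pairwise collision sets $A_{i,j}$ of small Lebesgue measure) occur with probability $\ge 1-t^{-d}$ by a moment calculation driven by the second-moment hypothesis, and for good trajectories decouple the $d$ Poisson counters from the discrete paths via Freedman's inequality so that Assumption~\ref{assumption 1/sqrt t} can be applied to $d-1$ approximately independent walks.
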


Theorem~\ref{thm:1} easily follows from Propositions~\ref{prop:fragmentation infinite-graph second moment} and~\ref{prop:fragmentation infinite-graph d-th moment}. Indeed, let $\alpha $ be the constant from Proposition~\ref{prop:fragmentation infinite-graph second moment}. Let $t>1$ sufficiently large and $d:=t^{\alpha /10}$. By Markov's Inequality, we have
\begin{equation}
\begin{split}
    \mathbb P (\exists v, \ m_t(v) \ge \eps  ) \le \sum _{v\in V} \mathbb P \big( m_t(v)^d \ge \eps ^d \big) &\le \eps ^{-d} \sum _{v\in V} \mathbb E \big[ m_t(v)^d \big]   \\
    &\le \big( \eps ^{-1} t^{-\alpha /10} \big)^d \le \exp \big(-t^{\alpha /10}\big).
\end{split}
\end{equation}

\section{Markov Chains with Shared Clocks}
\label{sec:walks}
In this section, we prove Propositions~\ref{prop:fragmentation infinite-graph second moment}, \ref{prop:fragmentation random-walk second moment} and \ref{prop:fragmentation infinite-graph d-th moment}.
The proofs begin by translating the discussion on the moments of the fragmentation process to a discussion on certain joint Markov chains.

Let $X_1(t),X_2(t),\ldots$ be continuous-time Markov chains on $V$ with the following joint Markovian law. They all use the same Poisson clocks but have independent trajectories (with transition probabilities given by $P$). Thus, each $X_i(t)$ has the same law (as $X(t)$ from Equation~\eqref{def:frag}) and they are correlated with each other only through sharing the same Poisson clocks. 

\begin{claim}\label{claim:1}
For every $d\in\mathbb N$, 
\begin{equation}
    \mathbb E \big[ (m_t(v))^d \big] =\mathbb P \big( X_1(t)=X_2(t)=\cdots =X_d(t)=v \big) .
\end{equation}
\end{claim}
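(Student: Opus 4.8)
The plan is to prove Claim~\ref{claim:1} by induction on the number of clock rings, tracking how both sides evolve. Recall that the fragmentation process $m_t(o,\cdot)$ is the conditional law of a single walk $X(t)$ started at $o$ given the clock rings $\cF_t$, so $m_t(o,v) = \PP(X(t)=v\mid \cF_t, X(0)=o)$. Since $X_1,\ldots,X_d$ are conditionally independent given $\cF_t$, each with conditional law $m_t(o,\cdot)$ when started at $o$, we have
\begin{equation}
\PP\big(X_1(t)=\cdots=X_d(t)=v \,\big|\, \cF_t\big) = \prod_{i=1}^d \PP\big(X_i(t)=v\mid\cF_t\big) = (m_t(o,v))^d .
\end{equation}
Taking expectations over $\cF_t$ gives the claim. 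So the real content is the conditional independence statement, which I would justify as follows.

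First I would make precise the joint construction: the Poisson clocks $\{N_v\}_{v\in V}$ are fixed, and conditionally on all the clocks, each walk $X_i$ evolves by reading off the clock rings — when the clock at the current location of $X_i$ rings, $X_i$ makes a jump according to the row $P_{X_i(t-),\cdot}$, using its own independent auxiliary randomness (independent across $i$). Thus, conditionally on $\cF_t$, the jump decisions of the different walks use disjoint sources of randomness, hence the trajectories $X_1,\ldots,X_d$ are conditionally independent given $\cF_t$. Each $X_i$ started at $o$ has, conditionally on $\cF_t$, exactly the law described by the fragmentation Markovian rules: at a ring of the clock at $v$, the conditional mass at $v$ redistributes to $u$ with weight $P_{vu}$, which is precisely the update rule \eqref{def:frag}. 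This identifies $\PP(X_i(t)=v\mid\cF_t) = m_t(o,v)$.

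Alternatively — and perhaps cleaner for a fully rigorous write-up — I would run the induction directly on the times of clock rings $0 = s_0 < s_1 < s_2 < \cdots$. Set $p_k(v) := \PP(X_1(s_k)=\cdots=X_d(s_k)=v \mid \cF_{s_k})$. At $s_0=0$ all walks sit at $o$, so $p_0(v) = \mathbf 1_{\{v=o\}} = (m_0(v))^d$. For the inductive step, suppose the clock at vertex $w$ rings at time $s_{k+1}$. Between $s_k$ and $s_{k+1}$ nothing moves, and at the ring only walks currently at $w$ jump, each independently choosing a new vertex with law $P_{w,\cdot}$. Conditioning on the configuration just before the ring: the event that all $d$ walks coincide at $v\ne w$ after the ring requires them all to have been at $v$ before (mass $p_k(v)$) and none jumps; the event that they coincide at $v = w$ after the ring can happen either if they were all at $w$ before and all jumped to $w$ (probability $p_k(w)P_{ww}^d$), or — no, if they were all at $w$ they all jump — so the only contributions are: all were at $w$ and all jumped to $v$ (probability $p_k(w) P_{wv}^d$, using conditional independence of the jump choices), plus, for $v \ne w$, all were already at $v$ (probability $p_k(v)$). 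Hence
\begin{equation}
p_{k+1}(v) = \mathbf 1_{\{v\ne w\}} p_k(v) + P_{wv}^d\, p_k(w) = \big(\mathbf 1_{\{v\ne w\}} m_{s_k}(v) + P_{wv} m_{s_k}(w)\big)^d = (m_{s_{k+1}}(v))^d,
\end{equation}
where the middle equality uses the induction hypothesis $p_k = m_{s_k}^d$ together with the fragmentation update rule, and the fact that the two terms $\mathbf 1_{\{v\ne w\}} m_{s_k}(v)$ and $P_{wv} m_{s_k}(w)$ are never both nonzero (the first vanishes when $v=w$). Since $m_t(v)$ is constant on $[s_k, s_{k+1})$ and the $X_i$ are constant there too, and the number of rings in $[0,t]$ is a.s.\ finite, taking expectations at time $t$ completes the induction.

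The main obstacle is bookkeeping the expansion of $(m_{s_{k+1}}(v))^d$ at a ring and checking it matches $p_{k+1}(v)$ — specifically verifying that the cross terms in $\big(\mathbf 1_{\{v\ne w\}}m_{s_k}(v) + P_{wv} m_{s_k}(w)\big)^d$ all vanish because the two summands have disjoint support in $v$. Beyond that, one must be slightly careful that the walks jumping at a ring use genuinely independent randomness so that the joint jump probability factors as $P_{wv}^d$; this is built into the definition of the joint law but deserves an explicit sentence. Everything else is routine.
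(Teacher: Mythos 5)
Your first argument --- conditional independence of $X_1,\ldots,X_d$ given $\mathcal F_t$, each with conditional law $m_t(o,\cdot)$, followed by factoring the joint conditional probability and taking expectations --- is exactly the paper's proof and is correct; the second paragraph's elaboration of why the walks are conditionally independent given the clocks is a sound clarification of that same argument.

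Your ``alternative'' induction, however, has a genuine gap. The recursion $p_{k+1}(v) = \mathbf{1}_{\{v\ne w\}} p_k(v) + P_{wv}^d\, p_k(w)$ is wrong: it omits the mixed events in which, at time $s_k$, some of the $d$ walks were at $v$ (and stayed there) while the others were at $w$ and jumped to $v$. Correspondingly, the assertion that the cross terms in $\bigl(\mathbf{1}_{\{v\ne w\}} m_{s_k}(v) + P_{wv} m_{s_k}(w)\bigr)^d$ vanish because ``the two summands have disjoint support in $v$'' is false: for $v\ne w$ both $m_{s_k}(v)$ and $P_{wv}m_{s_k}(w)$ can be strictly positive, so the binomial expansion really does have cross terms. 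What is disjoint is the pair of events for a \emph{single} walk (it was at $v$, versus it was at $w$ and jumped to $v$), which is why the one-walk conditional probabilities add to give $m_{s_{k+1}}(v)$; but raising to the $d$-th power for the joint coincidence event reintroduces exactly the cross terms that your recursion drops. A correct version of this induction would track the one-walk conditional probability $q_k(v):=\PP(X_1(s_k)=v\mid\mathcal F_{s_k})$, establish $q_{k+1}(v)=\mathbf{1}_{\{v\ne w\}}q_k(v)+P_{wv}q_k(w)=m_{s_{k+1}}(v)$, and only then invoke conditional independence to write $p_{k+1}(v)=q_{k+1}(v)^d$ --- at which point it collapses to your first argument and the induction adds nothing. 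You should drop the third paragraph, or at least not present it as the ``cleaner'' route.
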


\begin{proof}
Let $\mathcal F$ denote the sigma-algebra generated by the clock rings. By the definition of the fragmentation process,  
\begin{equation}\label{eq: m_v}
m_t(v)=  \mathbb P(X_1(t)=v\ | \ \mathcal F).    
\end{equation}
 
Let $d\in \mathbb N$ and $t\geq 0$. Conditioning on $\mathcal F$, the trajectories of the random walks are independent and identically distributed and therefore 
\begin{equation}
 \mathbb P \big( X_1(t)=\cdots=X_d(t)=v \ |\ \mathcal F \big) =\prod_{j=1}^d\mathbb P \big( X_j(t)=v \ | \ \mathcal F \big) =\mathbb P \big( X_1(t)=v \ | \ \mathcal F \big) ^d= m_t(v)^d.
\end{equation}
Claim~\ref{claim:1} follows by taking expectations.
\end{proof}

The reversible case is easier than the non-reversible case and therefore we begin with the proof of Proposition~\ref{prop:fragmentation random-walk second moment}.

\subsection{Proof of Proposition~\ref{prop:fragmentation random-walk second moment}}

Let $X_1(t)$ and $X_2(t)$ be independent continuous-time Markov chains with shared Poisson clocks. By Claim~\ref{claim:1} it suffices to bound the probability that $X_1(t)=X_2(t)$.

We start with some notations that will be useful throughout the proof. A path $p$ is a finite or infinite sequence of states of the Markov chain. We let $|p|$ be the number of steps taken by the path and let $p^{-1}$ be the reversed path. For an integer $n$ and a path $p$ with $|p| \ge n$ we denote by $p_n$ the path obtained from the first $n$ steps of $p$.

Next, let $\mathcal A _1 (p)$ be the event that the first $|p|$ steps of $X_1$ were along the path $p$. Similarly, we let $\mathcal A _2 (p)$ be the event that the first $|p|$ steps of $X_2$ were along the path $p$. Note that the events $\mathcal A _1 (p)$ and $\mathcal A _2 (p)$ are independent of each other and independent of the Poisson clocks on the vertices. We have that
\begin{equation}\label{eq:7}
    \mathbb P (\mathcal A _1 (p) ) = \mathbb P (\mathcal A _2 (p) ) = \prod _{j=0}^{|p|-1} P\big( p(j),p(j+1) \big).
\end{equation}

Finally, let $N_1(t)$ and $N_2(t)$ be the number of steps taken by the Markov chains $X_1$ and $X_2$ respectively up to time $t$. Clearly $N_1(t),N_2(t) \sim \text{Poisson}(t)$. It is convenient to work inside the event $\mathcal B :=\{ |N_1(t)+N_2(t)-2t| \le t \}$ which holds with high probability. Indeed,   
\begin{equation}\label{eq:B^c}
    \mathbb P (\mathcal B ^c) \le \mathbb P ( |N_1(t)-t|\ge t/2 )+\mathbb P ( |N_2(t)-t|\ge t/2 ) \le Ce^{-ct}.
\end{equation}

We have that 
\begin{equation}
    \{ X_1(t)=X_2(t) \}\cap \mathcal B = \bigcup _{ n_1,n_2  } \bigcup _{(q,q')} \mathcal A _1 (q) \cap \mathcal A _2 (q') \cap \{ N_1(t)=n_1 \} \cap \{ N_2(t)=n_2 \}
\end{equation}
where the first union is over pairs of integers $n_1$ and $n_2$ with $|n_1+n_2-2t|\le t$ and the second union is over pairs of paths $q$ and $q'$ with $|q|=n_1$ and $|q'|=n_2$ that start at $o$ and end at the same vertex.
Next, let $C_n$ be the set of paths $p$ from $o$ to $o$ with $|p|=n$ in $V$. Clearly, the paths $q$ and $q'$ appearing in the union above can be concatenated to a path $p\in C_n$ where $n:=n_1+n_2$ such that we have $q=p_{n_1}$ and $q'=(p^{-1})_{n_2}$. Thus, by the union bound, 
\begin{equation}
\begin{split}
    \mathbb P (X_1(t)=X_2&(t),\, \mathcal B) \le \sum _{n=\lceil t\rceil }^{\lfloor 3t \rfloor  } \sum _{p\in C_n} \sum _{n_1=0}^{n} \mathbb P (\mathcal A _1 (p_{n_1}) )\cdot \mathbb P (\mathcal A _2 ((p^{-1})_{n-n_1}) ) \cdot \\
    &\quad \quad \quad \quad \quad  \cdot \mathbb P \big( N_1(t)=n_1,\ N_2(t)=n-n_1 \ | \ \mathcal A _1 (p_{n_1}), \mathcal A _2 ((p^{-1})_{n-n_1}) \big) \\
    \le \Delta   \sum _{n=\lceil t\rceil }^{\lfloor 3t \rfloor  } & \sum _{p\in C_n} \mathbb P (\mathcal A_1 (p)) \sum _{n_1=0}^{n} \mathbb P \big( N_1(t)=n_1,\ N_2(t)=n-n_1 \ | \ \mathcal A_1 (p_{n_1}), \mathcal A_2 ((p^{-1})_{n-n_1}) \big) ,
\end{split}
\end{equation}
where in the last inequality we used \eqref{eq:7} and the fact that the chain is reversible. We claim that for any $p\in C_n$,
\begin{equation}\label{eq:6}
\begin{split}
    \mathbb P \big( N_1(t)=n_1,&\ N_2(t)=n-n_1 \ | \ \mathcal A_1 (p_{n_1}), \mathcal A_2 ((p^{-1})_{n-n_1}) \big) \\
    =&\mathbb P \big(  N_1(t)=n_1,\ N_2(t)=n-n_1 \ | \ \mathcal A_1 (p), \mathcal A_2 (p^{-1}) \big) .
    \end{split}
\end{equation}
Indeed, the event $\{N_1(t)=n_1,\ N_2(t)=n-n_1  \}$ is independent of the trajectory of the chain $X_1$ after $n_1$ steps and the trajectory of $X_2$ after $n-n_1$ steps and therefore we can extend these trajectories arbitrarily. Substituting this we get  
\begin{equation}\label{eq:4}
    \mathbb P (X_1(t)=X_2(t), \, \mathcal B) \le  \Delta \sum _{n=\lceil t \rceil }^{ \lfloor 3t \rfloor } \sum _{p\in C_n} \mathbb P ( \mathcal A _1 (p)) \cdot \mathbb P \big(  N_1(t)+N_2(t)=n \ | \ \mathcal A _1 (p), \mathcal A_2 (p^{-1}) \big).
\end{equation}

We need the following lemma in order to estimate the sum on the right-hand side of \eqref{eq:4}. The statement of the lemma requires additional notations. Let $0=t_0<t_1<t_2<\cdots$ be the times in which either one of the Poisson clocks $N_1(t)$ or $N_2(t)$ ring (i.e., the jump times of $N_1(t)+N_2(t)$). We also let $\mathcal Z_\infty$ denotes the $\sigma$-algebra generated by $(X_1(t_i),X_2(t_i),N_1(t_i),N_2(t_i))_{i=1}^{\infty}$. $\mathcal Z_\infty $ contains all the discrete information of the continuous-time Markov chain $(X_1(t),X_2(t))$. 

\begin{lem}\label{lem:3}
For all $t\le n \le 3t$ we have almost surely
\begin{equation}\label{eq:1}
    \mathbb P \big( N_1(t)+N_2(t)=n \ | \ \mathcal Z_\infty \big) \le \frac{C}{\sqrt{t}} \exp \Big( - \frac{c(n-2t)^2}{t} \Big).
\end{equation}
\end{lem}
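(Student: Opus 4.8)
The plan is to reduce the statement to a statement about a sum of conditionally independent random variables, and then apply a local central limit estimate. Observe first that $N_1(t)+N_2(t)$ is a rate-$2$ Poisson process, so its jump times $0=t_0<t_1<t_2<\cdots$ have i.i.d.\ $\mathrm{Exponential}(2)$ increments $\xi_i:=t_i-t_{i-1}$, and crucially these increments are \emph{independent} of the $\sigma$-algebra $\mathcal Z_\infty$, which only records the discrete data $(X_1(t_i),X_2(t_i),N_1(t_i),N_2(t_i))_i$ — i.e., which clock rang at each jump and where the walks moved — but not the real-time spacing of the jumps. (At each jump, the choice of which of the two clocks rings is a fair coin, independent of the $\xi_i$'s; this is the standard thinning/superposition description of independent Poisson clocks.) Consequently, conditionally on $\mathcal Z_\infty$, the event $\{N_1(t)+N_2(t)=n\}$ is exactly the event $\{t_n \le t < t_{n+1}\}=\{\sum_{i=1}^n \xi_i \le t < \sum_{i=1}^{n+1}\xi_i\}$, which is $\mathcal Z_\infty$-measurably equal to $\{N(t)=n\}$ where $N$ is a fixed rate-$2$ Poisson process. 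Hence the left-hand side of \eqref{eq:1} equals $\mathbb P(N(t)=n)=e^{-2t}(2t)^n/n!$ deterministically, with no conditioning needed.

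Given this reduction, the lemma becomes the purely analytic claim that for $t\le n\le 3t$,
\begin{equation}
e^{-2t}\frac{(2t)^n}{n!}\le \frac{C}{\sqrt t}\exp\Big(-\frac{c(n-2t)^2}{t}\Big).
\end{equation}
This is a standard local-CLT–type bound for the Poisson distribution with mean $2t$, and I would prove it directly by Stirling's formula: writing $n=2t+x$ with $-t\le x\le t$, Stirling gives $n!\ge \sqrt{2\pi n}\,(n/e)^n$, so $e^{-2t}(2t)^n/n! \le (2\pi n)^{-1/2}\exp\big(n - 2t - n\log(n/(2t))\big)$. Setting $g(x):=n-2t-n\log(n/2t) = x - (2t+x)\log(1+x/(2t))$ and using the elementary inequality $\log(1+u)\ge u - u^2/2 + \dots$ — more precisely, the bound $u-(1+u)\log(1+u)\le -u^2/(2(1+u))\le -u^2/8$ valid for $|u|\le 1/2$ with $u=x/(2t)$ — yields $g(x)\le -x^2/(8\cdot 2t)\cdot(\text{const})$, i.e.\ $g(x)\le -c x^2/t$ for an absolute $c>0$ on the range $|x|\le t$. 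Combined with $(2\pi n)^{-1/2}\le (2\pi t)^{-1/2}$ (since $n\ge t$), this gives the desired bound with $C=(2\pi)^{-1/2}$.

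The only real point requiring care is the independence assertion in the first paragraph — that the real-time gaps $\xi_i$ between consecutive jumps of the superposed clock are independent of all the discrete data in $\mathcal Z_\infty$. This is where one must be slightly careful about what $\mathcal Z_\infty$ contains: it records $N_1(t_i)$ and $N_2(t_i)$, i.e.\ the labels telling us which clock fired at each jump, and $X_1(t_i),X_2(t_i)$, the positions of the two walkers; none of this depends on the continuous holding times, by the elementary properties of Poisson processes (the jump times of a Poisson process, given the number of jumps in $[0,t]$, are an ordered i.i.d.\ uniform sample, and are independent of any labelling done by independent fair coins at the jumps, and of the walk increments, which are driven only by those coins). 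I would state this as a short lemma or simply invoke the standard fact, after which the proof is the Stirling computation above. I expect the write-up obstacle to be purely expository: making the $\mathcal Z_\infty$-measurability argument precise without belaboring it.
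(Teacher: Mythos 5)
There is a genuine gap: your first paragraph models the joint process incorrectly, and the equality you derive from it is false. You treat $N_1$ and $N_2$ as two independent rate-$1$ Poisson processes, so that $N_1+N_2$ is rate-$2$ Poisson and, at each jump of the superposition, a fair coin independent of the holding times decides which clock rang. But in this paper the Poisson clocks are attached to \emph{vertices}, and $X_1$ and $X_2$ \emph{share} them. When $X_1(s)=X_2(s)=v$, a single ring of the clock at $v$ moves \emph{both} walkers simultaneously, so $N_1$ and $N_2$ jump at the same instant; when $X_1(s)\neq X_2(s)$, they jump independently from two distinct clocks. As a result, $N_1+N_2$ is not a rate-$2$ Poisson process: it increases by $2$ at rate $1$ while the walkers coincide, and by $1$ at rate $2$ while they differ. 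Equivalently, the gap $t_{i+1}-t_i$ is $\mathrm{Exp}(1)$ when $X_1(t_i)=X_2(t_i)$ and $\mathrm{Exp}(2)$ when they differ, so the holding times are \emph{not} i.i.d.\ and their distribution is determined by $\mathcal Z_\infty$. A concrete counterexample to your claimed equality: on an event of positive probability where the two discrete trajectories agree at every step, $N_1\equiv N_2$, so $N_1(t)+N_2(t)$ is always even, and the conditional probability of an odd value of $n$ is $0$, not $e^{-2t}(2t)^n/n!$.

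Once the reduction in the first paragraph fails, the Stirling computation in the second paragraph (which is itself fine) is bounding the wrong quantity, and the proof does not close. The correct route — and what the paper actually does — is to condition on $\mathcal Z_\infty$ and observe that the hitting time $t_k$ of the $n$-th combined step is then a sum of $k_1$ i.i.d.\ $\mathrm{Exp}(1)$'s and $k_2$ i.i.d.\ $\mathrm{Exp}(2)$'s, i.e.\ $\mathrm{Gamma}(k_1,1)*\mathrm{Gamma}(k_2,2)$, where $k_1$ (agreement steps) and $k_2$ (disagreement steps) are $\mathcal Z_\infty$-measurable with $2k_1+k_2=n$. The essential technical input, which your proposal has no analogue of, is a density comparison (Claim~4.3 in the paper) showing that this Gamma convolution is uniformly dominated, for $x\le n$, by a constant times the $\mathrm{Gamma}(\lfloor n/2\rfloor,1)$ density; integrating against the $\mathrm{Exp}(1)$ overshoot then reduces the left-hand side of \eqref{eq:1} to a genuine Poisson point probability $\mathbb P(\mathrm{Poisson}(t)=\lfloor n/2\rfloor)$, for which the standard local estimate you invoke does apply. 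So the Stirling part of your write-up is salvageable as the final step, but the reduction to it requires the Gamma-domination argument, not independence of holding times from $\mathcal Z_\infty$.
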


% \begin{lem}\label{lem:3}
% For all $t\le n \le 3t$ we have that 
% \begin{equation}\label{eq:1}
%     \mathbb P ( N_1(t)+N_2(t)=n \ | \ \mathcal A _1 (p), \mathcal A _2 (p^{-1}) ) \le C \cdot \mathbb P (N=\lfloor n/2 \rfloor ),
% \end{equation}
% where $N\sim \text{Poisson}(t)$.
% \end{lem}

In order to prove the lemma we will need the following technical claim. To this end, recall that the convolution of two functions $\varphi ,\psi :\mathbb R _+ \to \mathbb R  $ is given by 
\begin{equation}
    \varphi * \psi (x):=\int _0^{x}   \varphi (y) \psi (x-y) dy.
\end{equation}

\begin{claim}\label{claim:4}
Let $n\ge 2$ and let $k_1,k_2>0$ such that $2k_1+k_2=n$. Let $\varphi _1$, $\varphi _2$ and $\varphi _3$ be the densities of $\text{Gamma}(k_1,1)$, $\text{Gamma} (k_2,2)$ and $\text{Gamma}(\lfloor n/2 \rfloor ,1)$ respectively. Then, for all $x\le n$ we have that $\varphi _1 *\varphi _2 (x)\le C\varphi _3(x)$ for some absolute constant $C>0$.
\end{claim}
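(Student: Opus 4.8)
The plan is to avoid computing $\varphi_1*\varphi_2$ explicitly — that route produces a confluent hypergeometric factor whose decay would then have to be controlled by hand — and instead run a short pointwise comparison that reduces the claim to the Gamma semigroup identity together with one routine estimate on ratios of $\Gamma$-values.

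\emph{Step 1 (the crux).} I would first show that the density of $\mathrm{Gamma}(\kappa,2)$ is dominated, pointwise and with an \emph{absolute} constant, by the density of $\mathrm{Gamma}(\kappa/2,1)$: there is $C_1$ with $\varphi_2(x)\le C_1\,\varphi_{\mathrm{Gamma}(k_2/2,1)}(x)$ for all $x>0$. This is the right direction because the two laws share the mean $\kappa/2$ while $\mathrm{Gamma}(\kappa,2)$ has the smaller variance. Concretely, the ratio of the two densities is $\frac{2^{\kappa}\Gamma(\kappa/2)}{\Gamma(\kappa)}x^{\kappa/2}e^{-x}$, which is maximized at $x=\kappa/2$; by Legendre's duplication formula $\Gamma(\kappa)=2^{\kappa-1}\pi^{-1/2}\Gamma(\kappa/2)\Gamma(\tfrac{\kappa+1}{2})$ the maximal value equals $\frac{2\sqrt{\pi}}{\Gamma((\kappa+1)/2)}(\kappa/2)^{\kappa/2}e^{-\kappa/2}$. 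For $\kappa\ge1$ a Stirling lower bound on $\Gamma(\tfrac{\kappa+1}{2})$ makes this at most $2$, while for $0<\kappa<1$ one has trivially $\Gamma(\tfrac{\kappa+1}{2})\ge\Gamma(1)=1$ and $(\kappa/2)^{\kappa/2}e^{-\kappa/2}\le1$, giving the bound $2\sqrt{\pi}$; thus $C_1=2\sqrt{\pi}$ works.

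\emph{Step 2 (convolve, use the semigroup property).} Since $\varphi_1\ge0$, convolving the inequality of Step 1 with $\varphi_1$ preserves it, and $\varphi_{\mathrm{Gamma}(k_1,1)}*\varphi_{\mathrm{Gamma}(k_2/2,1)}=\varphi_{\mathrm{Gamma}(k_1+k_2/2,1)}=\varphi_{\mathrm{Gamma}(n/2,1)}$, because the two convolved laws share the rate $1$ and $2k_1+k_2=n$. Hence $\varphi_1*\varphi_2(x)\le C_1\,\varphi_{\mathrm{Gamma}(n/2,1)}(x)$ for every $x>0$.

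\emph{Step 3 (compare shapes $n/2$ and $\lfloor n/2\rfloor$).} Finally compare $\varphi_{\mathrm{Gamma}(n/2,1)}$ with $\varphi_3=\varphi_{\mathrm{Gamma}(\lfloor n/2\rfloor,1)}$: same rate, and shapes differing by $a:=n/2-\lfloor n/2\rfloor\in[0,1)$. The ratio equals $\frac{\Gamma(m)}{\Gamma(m+a)}x^{a}$ with $m:=\lfloor n/2\rfloor\ge1$ (as $n\ge2$), so for $x\le n\le 2m+2\le 4m$ it is at most $\frac{\Gamma(m)}{\Gamma(m+a)}(4m)^{a}$, and Wendel's inequality $\Gamma(m+a)\ge\Gamma(m)\,m^{a}\bigl(\tfrac{m}{m+a}\bigr)^{1-a}\ge\tfrac12\Gamma(m)\,m^{a}$ bounds it by $2\cdot4^{a}\le8$. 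Chaining the three steps gives $\varphi_1*\varphi_2(x)\le C\varphi_3(x)$ for $x\le n$ with $C=8C_1=16\sqrt{\pi}$ absolute. The only delicate point is Step 1 — making the constant in $\varphi_2\le C_1\varphi_{\mathrm{Gamma}(k_2/2,1)}$ uniform in $\kappa=k_2$ all the way down to $\kappa\to0$ — which is precisely why one splits into the regimes $\kappa\ge1$ and $\kappa<1$ and uses the duplication formula rather than Stirling's approximation alone; Steps 2 and 3 are routine.
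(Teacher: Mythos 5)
Your proof is correct and follows the same decomposition as the paper's: dominate $\varphi_2$ pointwise by the $\mathrm{Gamma}(k_2/2,1)$ density, convolve with $\varphi_1$ via the Gamma semigroup identity to land on $\mathrm{Gamma}(n/2,1)$, and then compare shapes $n/2$ and $\lfloor n/2\rfloor$ using $x\le n$. The only difference is technical bookkeeping: where the paper invokes Stirling's formula for both pointwise comparisons (implicitly relying on $k_2\ge 1$ in the application for the Stirling constant to be uniform), you use the Legendre duplication formula and Wendel's inequality, which yields an explicit absolute constant uniformly over all $k_2>0$.
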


\begin{proof}
Let $\varphi _4 $ and $\varphi _5$ be the densities of $\text{Gamma}(k_2/2,1)$ and $\text{Gamma} (n/2,1)$ respectively. First, we claim that for all $x>0$, $\varphi _2(x) \le C_2\varphi _4(x)$ for some $C_2>0$. Indeed,
\begin{equation}
\begin{split}
    \varphi _2(x)= \frac{2^{k_2}x^{k_2-1} e^{-2x}}{\Gamma (k_2)}= \frac{2^{k_2}x^{k_2/2-1} e^{-x}}{\Gamma (k_2)}x^{k_2/2} e^{-x} \le \frac{2^{k_2}x^{k_2/2-1} e^{-x}}{\Gamma (k_2)}(k_2/2)^{k_2/2} e^{-k_2/2} \\
    \le C_1 \frac{2^{k_2/2}e^{k_2/2}}{\sqrt{k_2}\cdot k_2^{k_2/2}} x^{k_2/2-1}e^{-x}  \le C_2  \frac{x^{k_2/2-1}e^{-x}}{\Gamma (k_2/2)}=C_2\varphi _4(x),
\end{split}
\end{equation}
where in the first inequality we used that $f(x):=x^ae^{-x}$ is maximal when $x=a$ and in the last two inequalities we used Stirling's formula. Thus, using that $k_1+k_2/2=n/2$ we obtain 
\begin{equation}\label{eq:8}
     \varphi _1 *\varphi _2(x) \le C_2\varphi _1 *\varphi _4(x) = C_2 \varphi _5(x) .
\end{equation}
Finally, we have 
\begin{equation}\label{eq:9}
    \varphi _5(x) = \frac{x^{n/2-1}e^{-x}}{\Gamma (n/2)}\le C_3 \frac{x^{\lfloor n/2 \rfloor -1}e^{-x}}{\Gamma (\lfloor n/2 \rfloor )} \le C_3 \varphi _3(x),
\end{equation}
where the first inequality holds trivially when $n$ is even and when $n$ is odd it follows from Stirling's formula using that $x\le n$. The claim follows from \eqref{eq:8} and \eqref{eq:9} (with $C=C_2C_3$).
\end{proof}

We now turn to prove Lemma~\ref{lem:3}.

\begin{proof}[Proof of Lemma~\ref{lem:3}]
Condition on $\mathcal Z _{\infty }$ and let 
\begin{equation}
    k:=\min \big\{ i\ge 1 : N_1(t_i)+N_2(t_i)\ge n \big\}.
\end{equation}
Without loss of generality, we may assume that $N_1(t_k)+N_2(t_k)=n$. Indeed, otherwise the conditional probability in the left-hand side of  \eqref{eq:1} is $0$ and the lemma follows. Suppose in addition that $X_1(t_k)=X_2(t_k)$. The case in which $X_1(t_k)\neq X_2(t_k)$ is similar.

Next, note that if $X_1(t_i)=X_2(t_i)$ then $t_{i+1}-t_i\sim \exp (1)$ and if $X_1(t_i)\neq X_2(t_2)$ then $t_{i+1}-t_i\sim \exp (2)$. Thus,
\begin{equation}\label{eq:2}
    (t_k,t_{k+1}) \overset{d}{=} \Big( \sum _{k=1}^{k_1} Z_k+ \sum _{k=1}^{k_2} W_k , \sum _{k=1}^{k_1+1} Z_k+ \sum _{k=1}^{k_2} W_k \Big),
\end{equation}
where $Z_1,Z_2,\dots $ are i.i.d.\ random variables with $\exp (1)$ distribution, $W_1,W_2,\dots $ are i.i.d.\ random variables with $\exp (2)$ distribution that are independent of $Z_1,Z_2,\dots $ and where 
\begin{equation}
    k_1:=|\{ i< k : X_1(t_i)=X_2(t_i) \}|,\quad  k_2:=|\{ i< k : X_1(t_i)\neq X_2(t_i) \}|.
\end{equation}
Note that $k,k_1,k_2$ are measurable in $\mathcal Z _{\infty }$ while $Z_i$ and $W_i$ are independent of $\mathcal Z _{\infty }$. Moreover, note that $2k_1+k_2=n$ since $N_1(t_i)+N_2(t_i)$ jumps by $2$ when $X_1(t_i)=X_2(t_i)$ and jumps by $1$ otherwise.

Conditioning on the value of $Z_{k_1+1}$ we obtain that 
\begin{equation}\label{eq:poisson bound}
\begin{split}
    \mathbb P \big( N_1(t)+N_2(t)=n \ | \ \mathcal Z _{\infty } \big)  &= \mathbb P \big( t_k\le t \le t_{k+1} \ | \ \mathcal Z _{\infty } \big)  \\
   &=\intop _0^{\infty } \mathbb P  \big( t-z\le t_k\le t \ | \ \mathcal Z _{\infty }\big)    e^{-z}dz.
\end{split}
\end{equation}
Finally, by \eqref{eq:2}, the density of $t_k$ conditioning on $\mathcal Z _{\infty }$ is exactly the density $\varphi _1*\varphi _2$ from Claim~\ref{claim:4}. Thus, if we let $T$ and $Z$ be independent random variables with $T\sim \text{Gamma}(\lfloor n/2 \rfloor ,1)$ and $Z\sim \exp (1)$, then by Claim~\ref{claim:4} and using that $n\ge t$ we have 
\begin{equation}\label{eq:10}
\begin{split}
    \intop _0^{\infty } \mathbb P &\big(  t-z\le t_k\le t  \ | \ \mathcal Z _{\infty } \big)   e^{-z}dz \le C \intop _0^{\infty } \mathbb P ( t-z\le T\le t )   e^{-z}dz \\
    &= C \cdot \mathbb P ( T\le t \le T+Z)=C \cdot \mathbb P (N=\lfloor n/2 \rfloor ) \le \frac{C}{\sqrt{t}} \exp \Big( -\frac{c(n-2t)^2}{t} \Big),
\end{split}
\end{equation}
where $N$ is a Poisson$(t)$ random variable and where the last inequality is a standard estimate on the Poisson distribution. Substituting \eqref{eq:10} into \eqref{eq:poisson bound} finishes the proof of \eqref{eq:1}.
\end{proof}

We now proceed with the proof of Proposition~\ref{prop:fragmentation infinite-graph second moment}. The events $\mathcal A _1(p)$ and $\mathcal A _2(p)$ are measurable in $\mathcal Z _{\infty }$. Thus, substituting the bound \eqref{eq:1} into \eqref{eq:4} we obtain  
\begin{equation}\label{eq:5}
\begin{split}
    \mathbb P &(X_1(t)=X_2(t),\, \mathcal B)\le  \frac{C\Delta }{\sqrt{t}} \sum _{n=\lceil t \rceil }^{ \lfloor 3t \rfloor } \exp \Big( -\frac{c(n-2t)^2}{t} \Big) \sum _{p\in C_n}  \mathbb P (\mathcal A (p))\\
    &=\frac{C\Delta }{\sqrt{t}} \sum _{n=\lceil t \rceil }^{ \lfloor 3t \rfloor } \exp \Big(- \frac{c(n-2t)^2}{t} \Big) \cdot P_{o,o}^n \le \frac{C\Delta C_0}{t} \sum _{n=\lceil t \rceil }^{ \lfloor 3t \rfloor } \exp \Big( - \frac{c(n-2t)^2}{t} \Big)\le \frac{\tilde{C}\Delta C_0}{\sqrt{t}},
\end{split}
\end{equation}
where in the second inequality we used Assumption~\ref{assumption 1/sqrt t} and the fact that $t\le n\le 3t\le |V|^2$. This finishes the proof of the proposition using \eqref{eq:B^c}.

\subsection{Proof of Proposition~\ref{prop:fragmentation infinite-graph second moment}}

We first provide an informal overview of the proof. The event $\{X_1(t)=X_2(t)\}$ is partitioned into $ O(t\log t)$ events and Assumption~\ref{assumption 1/sqrt t} is used for estimating the probability of these events as follows: 
\begin{multline}\label{eq:big picture}
\mathbb P(X_1(t)=X_2(t))
\approx \sum_{n_1,n_2 = t\pm O(\sqrt{t\log t})}\mathbb P(\hat X_1(n_1)=\hat X_2(n_2),N_1(t)=n_1,N_2(t)=n_2)\\
\lesssim \frac {C_0}{\sqrt t} \sum_{n_1,n_2}
\mathbb P(N_1(t)=n_1,N_2(t)=n_2|\hat X_1(n_1)=\hat X_2(n_2)).
\end{multline}
To estimate the RHS of \eqref{eq:big picture}, we analyze the $\mathbb Z_+^2$-valued process $\{N(s)\}_s=\{(N_1(s),N_2(s))\}_s$ conditioned on $\hat X_1,\hat X_2$, where $\hat X_i$ is the trajectory of $X_i$, namely, $X_i(t)=\hat X_i(N_i(t))$. The process $N(\cdot)$ is a continuous-time simple random walk on a directed graph whose vertex set is $\mathbb Z_+^2$. There are unit Poisson clocks on the edges. The initial state is $(0,0)$, and the outgoing edges from each state $(z_1,z_2)\in\mathbb Z^2$ depend on $\hat X_1,\hat X_2$ as follows: 
\[
(z_1,z_2)\to
\begin{cases}
(z_1+1,z_2+1)&\text{if $\hat X_1(z_1)=\hat X_2(z_2)$,}\\
(z_1+1,z_2),(z_1,z_2+1)&\text{if $X_1(z_1)\neq\hat X_2(z_2)$.} 
\end{cases}
\]

For $n\in\mathbb N$, consider the time in which the random walk crosses the line $z_1+z_2=n$ denoted by  
\[
\tau(n):=\min\{s>0:N_1(s)+N_2(s)\geq n\}.
\]
The event $\{N_1(t)=n_1,N_2(t)=n_2\}$ from \eqref{eq:big picture} is expressed as $E_1\cap E_2$ where 
\begin{align}
&E_1:= \{N_1(\tau(n_1+n_2))-N_2(\tau(n_1+n_2))=n_1-n_2\},\\
&E_2:=\{N_1(t)+N_2(t)=n_1+n_2\}.
\end{align}
The event $E_1$ depends only on the trajectory of $N(\cdot)$. The probability of $E_2$ conditioned on the trajectory of $N(\cdot)$ is of order $O(t^{-\frac 1 2})$, by Lemma~\ref{lem:3}. Therefore, it remains to estimate the probability of $E_1$ conditioned on $\hat X_1,\hat X_2$. Specifically, it remains to show that for every $t$ large enough (depending on $C_0$), every $x\approx t $ and every $y\in\mathbb Z$, 
\begin{equation}\label{eq: informal t^{-a}}
\mathbb P(N_1(\tau(x))-N_2(\tau(x))=y|\hat X_1,\hat X_2 )\lesssim t^{-\alpha},
\end{equation}
 for some universal constant $\alpha>0$.  

Most of the effort in the proof of Proposition~\ref{prop:fragmentation infinite-graph second moment} is devoted to proving \eqref{eq: informal t^{-a}}. Note, first, that \eqref{eq: informal t^{-a}} does not hold at every realization of $\hat X_1,\hat X_2$. For example, if $\hat X_1=\hat X_2$, then $N_1(s)=N_2(s)$ for every $s$. We identify a class of realizations of $\hat X_1,\hat X_2$, for which \eqref{eq: informal t^{-a}} does hold and call them ``good trajectories.'' The notion of good trajectories is not binary, it is rather a spectrum.  The goodness of the trajectories determines the value of $\alpha$: the better the trajectories the larger an $\alpha$ can be guaranteed. It turns out that to show that the trajectories are good with high probability, it is sufficient to bound from below the expected proportion of time $c$ in which $X_1(s)\neq X_2(s)$, and the greater the lower bound the greater the ``goodness'' guarantee. In our proof, we first obtain a naive lower bound on $c$ depending on $C_0$ and use it to obtain an $\alpha$ that depends on $C_0$. Then, from \eqref{eq: informal t^{-a}}, we immediately get $c$ arbitrarily close to $1$, and with it get a universal $\alpha$.      

We turn now to the formal proof of  Proposition~\ref{prop:fragmentation infinite-graph second moment}. Let us introduce some notations and conventions first. 
\begin{itemize}
    \item We work under Assumption~\ref{assumption 1/sqrt t} with parameter $C_0$.    
    \item A pair of vertices $v,u\in V$ induce a probability measure over $X_1(t),X_2(t)$ by setting $X_1(0)=v, X_2(0)=u$. This probability measure is denoted $\mathbb P_{v,u}$ and the respective expectation is denoted $\mathbb E_{v,u}$.
    \item The (independent) trajectories of $X_1(t)$ and $X_2(t)$ ($t\geq 0$) are denoted $\hat X_1(n)$ and $\hat X_2(n)$ ($n=0,1,2,\ldots$) respectively.
    \item The (unit) Poisson clock that counts the steps of each $X_i$ is denoted $N_i(t)$. That is, $X_{i}(t)=\hat X_i(N_i(t))$ ($i=1,2$, $t\geq 0$).
    \item The times in which either one of the chains progress are denoted $(t_n)_{n=1}^\infty$, i.e., these are the step times of $N_1(t)+N_2(t)$. 
    \item The joint trajectory is defined as $Z(n):=(X_1(t_n),X_2(t_n),N_1(t_n),N_2(t_n))$, and 
$Z_i(n):=X_i(t_n)$, $i=1,2$.
    \item The $\sigma$-algebra generated by $Z(0),\ldots,Z(n)$ is denoted $\mathcal Z_n$ and the $\sigma$-algebra generated by $\{Z(n)\}_{n=0}^\infty$ is denoted $\mathcal Z_\infty$. \item The set $\{1,\ldots,m\}$ is denoted $[m]$, and $n+[m]$ denotes the set $\{n+1,\ldots,n+m\}$.
\end{itemize}   
\begin{lem}\label{lemma delta m}
Suppose there exist $\delta\in (0,1)$ and $m\in \mathbb N$ such that $P_{vu}^m\leq 1-\delta$, for every $v,u\in V$. Then, 
\[
\mathbb E_{v,u} \big[ \#\{1\leq i\leq m : Z_1(i)\neq Z_2(i) \} \big] \geq \delta,
\]
for every $v,u\in V$.
\end{lem}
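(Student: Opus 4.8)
The plan is to reduce the claim to a single first-moment inequality and then exploit the rigid path-matching structure forced by the coincidence of the two trajectories. Write $C_m:=\#\{1\le i\le m:Z_1(i)\ne Z_2(i)\}$ and let $E$ be the event $\{Z_1(i)=Z_2(i)\text{ for all }i=1,\dots,m\}$. Since $C_m\ge 1$ on $E^c$, we have $\mathbb E_{v,u}[C_m]\ge\mathbb P_{v,u}(E^c)=1-\mathbb P_{v,u}(E)$, so it suffices to prove that $\mathbb P_{v,u}(E)\le 1-\delta$ for all $v,u\in V$.

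The key step is to translate $E$ into a statement about the \emph{discrete} trajectories $\hat X_1(\cdot),\hat X_2(\cdot)$ of the two walkers, which are i.i.d.\ copies of the $P$-chain started at $v$ and $u$ respectively and are independent of the Poisson clocks. On $E$ the walkers coincide from the first event on, so each of the events $2,\dots,m$ is a \emph{joint} event at which both walkers move to independent destinations, while the first event is either joint (when $v=u$) or moves exactly one of the two walkers, necessarily to the location of the other. I would condition on which walker moves at the first event: a fair coin when $v\ne u$, and the deterministic outcome ``both move'' when $v=u$. In either case one walker, call it $A$, takes exactly $m$ steps during the first $m$ events and therefore traverses a path $x_0,x_1,\dots,x_m$ starting at $A$'s initial vertex; and the point is that, on $E$, the trajectory of the other walker $B$ is then \emph{completely determined} — it is $x_1,x_2,\dots,x_m$ (a path of length $m-1$) when $v\ne u$, and $x_0,x_1,\dots,x_m$ (the same path as $A$) when $v=u$.

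Once this bookkeeping is in place, the computation factorizes. For each first-event configuration $c$,
\[
\mathbb P_{v,u}\big(E\cap\{\text{configuration }c\}\big)=\mathbb P(\text{coin}=c)\sum_{\pi_A}\mathbb P\big(A\text{ follows }\pi_A\big)\cdot\mathbb P\big(B\text{ follows the path forced by }\pi_A\big),
\]
where the sum runs over the admissible length-$m$ paths $\pi_A=(x_0,\dots,x_m)$ from $A$'s starting vertex and we used the independence of the first-event coin, $\hat X_1$, and $\hat X_2$. The probability that $A$ follows a \emph{given} path of length $m$ from $x_0$ is $P_{x_0x_1}\cdots P_{x_{m-1}x_m}\le(P^m)_{x_0x_m}\le 1-\delta$ by hypothesis, while summing $\mathbb P(B\text{ follows its forced path})$ over the free coordinates of $\pi_A$ telescopes to $1$ by row-stochasticity of $P$. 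Since the coin probabilities over the (at most two) configurations sum to $1$, we obtain $\mathbb P_{v,u}(E)\le 1-\delta$, as needed.

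The only genuinely delicate point is the combinatorial bookkeeping in the second paragraph: one must carefully identify, on $E$, which walker moves at each of the first $m$ events and verify that $B$'s \emph{entire} trajectory (not merely its endpoint) is pinned down by the path of $A$ and the first-event coin, so that no residual sum survives on the $B$-side. The probabilistic estimates are then immediate. I would also check separately the small case $m=1$ and the degenerate case $v=u$, to make sure the argument nowhere implicitly assumes $m\ge 2$ or $v\ne u$.
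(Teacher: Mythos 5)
Your proof is correct, and the first reduction $\mathbb E_{v,u}[C_m]\ge \mathbb P_{v,u}(E^c)=1-\mathbb P_{v,u}(E)$ is exactly the one the paper uses. Where you diverge is in how you bound $\mathbb P_{v,u}(E)$. The paper does it in one line: on $\{Y=0\}$ the two \emph{independent} discrete trajectories $\hat X_1,\hat X_2$ are forced to agree, so $\mathbb P_{v,u}(Y=0)\le \mathbb P_{v,u}(\hat X_1(m)=\hat X_2(m))=\sum_w (P^m)_{vw}(P^m)_{uw}\le \max_w(P^m)_{vw}\le 1-\delta$. Your version reaches the same bound by conditioning on the first-event coin, summing over $A$'s length-$m$ path, and observing that $B$'s trajectory is forced, then pulling out $\max_\pi \mathbb P(A\text{ follows }\pi)\le 1-\delta$ while the forced $B$-probabilities telescope to $1$ by row-stochasticity. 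Conceptually this is the same use of independence; your route is just more explicit and a bit longer. One small thing your careful bookkeeping actually gets right that the paper glosses over: when $v\ne u$ and $X_1$ moves first, the inclusion on $\{Y=0\}$ is $\hat X_1(m)=\hat X_2(m-1)$, not $\hat X_1(m)=\hat X_2(m)$; the bound still goes through because $(P^m)_{vw}\le 1-\delta$ implies $(P^a)_{vw}\le 1-\delta$ for $a\ge m$ by substochasticity, but the paper's displayed inequality is, strictly read, a step off. So your extra care is not wasted, though the paper's argument is salvageable with a one-word fix and is the cleaner presentation.
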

\begin{proof}
Denote $Y=\#\{1\leq i\leq m : Z_1(i)\neq Z_2(i) \}$. It suffices to show $\mathbb P(Y=0)\leq 1-\delta$. Assume first that $v\neq u$. Since  
\[\{Y=0,N_1(t_1)=1\}\subset \{\hat X_1(i+1)=\hat X_2(i):i=0,\ldots,m-1\}\subset\{\hat X_1(m)=\hat X_2(m-1)\},
\] 
we have
\begin{multline}
    \mathbb P(Y=0,N_1(t_1)=1)\leq \mathbb P(\hat X_1(m)=\hat X_2(m-1),N_1(t_1)=1)\\
    =\mathbb P(\hat X_1(m)=\hat X_2(m-1))\mathbb P(N_1(t_1)=1)\leq (1-\delta)\frac 1 2.
\end{multline}
Similarly,
\begin{multline}
    \mathbb P(Y=0,N_1(t_1)=0)\leq \mathbb P(\hat X_1(m-1)=\hat X_2(m),N_1(t_1)=0)\\
    =\mathbb P(\hat X_1(m-1)=\hat X_2(m))\mathbb P(N_1(t_1)=0)\leq (1-\delta)\frac 1 2.
\end{multline}
Adding together the two inequalities yields $\mathbb P(Y=0)\leq 1-\delta$, which completes the proof for the case $v\neq u$. In the case $v=u$, we the proof is concluded with 
\begin{equation}
    \mathbb P(Y=0)= \mathbb P(\hat X_1(1)=\hat X_2(1),\ldots,\hat X_1(m)=\hat X_2(m))
    \leq \mathbb P(\hat X_1(m)=\hat X_2(m))\leq 1-\delta.
\end{equation}

\end{proof}
\begin{lem}\label{lem: good k steps}
Suppose there exists $m\in\mathbb N$ and $c>0$ such that
\[
\mathbb E_{v,u}\big[ \#\{1\leq i\leq m : Z_1(i)\neq Z_2(i) \} \big] \geq cm,
\]
for every $v,u\in V$.
Then, for every $k\geq m$, every $\eps>0$ and every $v,u\in V$, 
\[
\mathbb P_{v,u} \Big( \mathbb E_{v,u} \big[ \#\{1\leq i\leq k:Z_1(i) \neq Z_2(i)\}\mid \hat X_1,\hat X_2\big] >c_1 k \Big) \geq 1- \exp(-{c_2}k),
\]
where $c_2=\frac {\eps ^2}{4m}(1 - \frac m k)$ and $c_1=(1-\sqrt{1-c})(1-\sqrt{1-c}-\eps)(1-\tfrac m k)(1- \exp(-{c_2}k))$.
\end{lem}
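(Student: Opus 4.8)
The plan is to pass to the discrete joint chain and argue blockwise. Recall that the pair $(Z_1(n),Z_2(n))_{n\ge0}$ (indeed the whole $(Z(n))_{n\ge0}$) is a time-homogeneous Markov chain: at each joint ring a fair coin decides which of the two walks moves, and it then moves according to $P$. Set $\ell:=\lfloor k/m\rfloor$ and split the first $\ell m$ joint steps into $\ell$ consecutive blocks of length $m$; let $S_j$ be the number of indices $i$ in the $j$-th block with $Z_1(i)\neq Z_2(i)$, so $D:=\#\{1\le i\le k:Z_1(i)\neq Z_2(i)\}\ge\sum_{j=1}^{\ell}S_j$. By the Markov property at the start of the $j$-th block, the conditional law of the next $m$ joint steps given $\mathcal Z_{(j-1)m}$ is that of a fresh run from the pair $(Z_1((j-1)m),Z_2((j-1)m))$, so the hypothesis yields $\mathbb E[S_j\mid\mathcal Z_{(j-1)m}]\ge cm$. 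Since $m-S_j\ge 0$ has conditional mean at most $(1-c)m$, Markov's inequality applied to $m-S_j$ at threshold $\sqrt{1-c}\,m$ gives, for $G_j:=\{S_j>(1-\sqrt{1-c})m\}$,
\[
\mathbb P\big(G_j\mid\mathcal Z_{(j-1)m}\big)\ \ge\ 1-\sqrt{1-c}\ =:\ p .
\]

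Next I would concentrate $N:=\#\{1\le j\le\ell:G_j\}$ from below. Since $\mathbb E[\mathbf 1_{G_j}\mid\mathcal Z_{(j-1)m}]\ge p$ and $\mathbf 1_{G_j}$ is $\mathcal Z_{jm}$-measurable, the sequence $(\mathbf 1_{G_j})_{j\le\ell}$ stochastically dominates $\ell$ i.i.d.\ $\mathrm{Bernoulli}(p)$ variables (equivalently, $\sum_{j\le r}(\mathbf 1_{G_j}-\mathbb P(G_j\mid\mathcal Z_{(j-1)m}))$ is a martingale with increments in $[-1,1]$), so Hoeffding's inequality gives $\mathbb P(N<(p-\eps)\ell)\le\exp(-2\eps^2\ell)$. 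On the complementary event, using $m\ell\ge k-m=(1-\tfrac m k)k$,
\[
D\ \ge\ \sum_{j:\,G_j}S_j\ >\ (1-\sqrt{1-c})\,m\,(p-\eps)\ell\ \ge\ (1-\sqrt{1-c})(1-\sqrt{1-c}-\eps)(1-\tfrac m k)\,k\ =\ c_1 k,
\]
so $\mathbb P_{v,u}(D>c_1 k)\ge 1-\exp(-2\eps^2\ell)$.

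It remains to replace $D$ by $\Phi:=\mathbb E_{v,u}[D\mid\hat X_1,\hat X_2]$, and this is the genuinely subtle point. After conditioning on the two independent trajectories the only remaining randomness is the i.i.d.\ fair-coin interleaving, and $D$ still fluctuates with it; so the blockwise bound does not directly control $\Phi$, and --- since the conditional mean of $S_j$ given the \emph{full} trajectories need not be at least $cm$ --- a ``good block'' event depending only on $(\hat X_1,\hat X_2)$ is not available. I would instead run the two steps above with $\eps/2$ in place of $\eps$, which gives $\mathbb P_{v,u}(D>\kappa k)\ge 1-\delta$ where $\kappa:=(1-\sqrt{1-c})(1-\sqrt{1-c}-\eps/2)(1-\tfrac m k)>c_1$ and $\delta:=\exp(-\eps^2\ell/2)$, and then pass to the conditional expectation by one more Markov step: from $\mathbb E_{\hat X}[\mathbb P(D\le\kappa k\mid\hat X_1,\hat X_2)]=\mathbb P(D\le\kappa k)\le\delta$ we get $\mathbb P(\mathbb P(D\le\kappa k\mid\hat X_1,\hat X_2)\ge\sqrt\delta)\le\sqrt\delta$, hence $\Phi\ge\kappa k(1-\sqrt\delta)$ off an event of probability $\le\sqrt\delta$. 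Since $\sqrt\delta=\exp(-\eps^2\ell/4)\le\exp(-c_2 k)$ by $\ell\ge(k-m)/m$, and $\kappa(1-\sqrt\delta)>c_1$ once $k$ exceeds a threshold depending on $c,\eps,m$, this yields $\mathbb P_{v,u}(\Phi>c_1 k)\ge 1-\exp(-c_2 k)$ with $c_2=\tfrac{\eps^2}{4m}(1-\tfrac m k)$. (For the remaining small $k$ the target probability is close to $0$ and the conclusion follows from a crude reverse-Markov estimate based on $\mathbb E[D]\ge c m\ell$.) I expect this transfer --- matching a bound on $D$ itself with the required bound on $\mathbb E[D\mid\hat X_1,\hat X_2]$ --- to be the main obstacle; the blockwise Markov and martingale steps are routine once the joint-chain structure is in place.
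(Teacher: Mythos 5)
Your proposal is correct and follows essentially the same route as the paper's proof of this lemma: split the first $m\lfloor k/m\rfloor$ joint steps into blocks of length $m$, apply Markov's inequality block-by-block to obtain a good-block probability of at least $1-\sqrt{1-c}$, use Azuma/Hoeffding concentration for the number of good blocks, and finish with a Markov step passing from the unconditional bound on $D$ to the conditional expectation $\mathbb{E}_{v,u}[D\mid \hat X_1,\hat X_2]$. You handle the final transfer more explicitly than the paper does, and your observation that the stated $c_1$ is only exactly recovered once $k$ is large enough (because the conditional probability bound introduces a $1-\sqrt{\delta}$ factor) is a genuine, if minor, imprecision in the paper's argument as well --- harmless, since the lemma is only invoked in the regime $k\to\infty$ (inside Lemma~\ref{lem: weak good trajectories}).
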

\begin{proof}[Proof of Lemma~\ref{lem: good k steps}]
Fix $k,\eps,v,u$. For a finite set $A\subset\mathbb N$, define a random variable
$Y_A:=\frac 1 {|A|} \#\{i\in A: Z_1(i)\neq Z_2(i) \}$. We must show that \[\mathbb P_{v,u}(\mathbb E_{v,u}[Y_{[k]}\mid \hat X_1,\hat X_2\big] >c_1])\geq 1- \exp(-{c_2}k).\]

By Markov's Inequality,
\[
\mathbb P_{v,u}(1- Y_{n+[m]}\geq \sqrt {1-c}\mid \mathcal Z_n)\leq \sqrt{1-c},\quad \forall n\in\mathbb N.
\]
It follows by Azuma's Inequality that
\[
\mathbb P_{v,u}\left(\frac{1}{\lfloor k/m\rfloor}\sum_{i=1}^{\lfloor k/m\rfloor}\mathbf{1}_{\{1-Y_{(i-1)m+[m]}\geq\sqrt{1-c}\}}> \sqrt{1-c} +\eps \right)\leq \exp\left(-\tfrac {\eps^2}{2}\lfloor k/m\rfloor\right).
\]
Therefore, by Markov's Inequality,
\begin{multline}
\mathbb P_{v,u}\left(\mathbb P_{v,u}\left(\frac{1}{\lfloor k/m\rfloor}\sum_{i=1}^{\lfloor k/m\rfloor}\mathbf{1}_{\{1-Y_{(i-1)m+[m]}\geq\sqrt{1-c}\}}> \sqrt{1-c} +\eps \middle |\hat X_1,\hat X_2\right)%\leq
\geq \exp\left(-\tfrac {\eps^2}{4}\lfloor k/m\rfloor\right)\right)\\
\leq \exp\left(-\tfrac {\eps^2}{4}\lfloor k/m\rfloor\right).
\end{multline}
In the event
\[
\left\{
\frac{1}{\lfloor k/m\rfloor}\sum_{i=1}^{\lfloor k/m\rfloor}\mathbf{1}_{\{1-Y_{(i-1)m+[m]}\geq\sqrt{1-c}\}}\leq \sqrt{1-c} +\eps 
\right\},
\]
we have
\[
Y_{[m\lfloor k/m\rfloor]}\geq (1-\sqrt{1-c})(1-\sqrt{1-c}-\eps).
\] 
Since $m\lfloor k/m\rfloor /k \geq (1-\frac m k)$, we have $Y_{[k]}    \geq (1-\frac m k)Y_{[m\lfloor k/m\rfloor]}$. Therefore,
\begin{multline}
\mathbb P_{v,u}\left(\mathbb P_{v,u}\left(Y_{[k]}\geq (1-\sqrt{1-c})(1-\sqrt{1-c}-\eps)(1-\tfrac m k)\middle |\hat X_1,\hat X_2\right)\geq 1- \exp\left(-c_2k\right)\right)\\
\geq 1- \exp\left(-c_2k\right),
\end{multline}
which concludes the proof of Lemma~\ref{lem: good k steps}.
\end{proof}
\begin{definition}[Good trajectories]\label{def: good trajectories}
Let $c,t>0$ and $k\in\mathbb N$. A realization $(p_1,p_2)$ of $\hat X_1,\hat X_2$ is called \emph{$(c,t,k)$-good trajectories} if for every $0 \leq m \leq 3t$,
\[
\mathbb E\left[\#\{m\leq i\leq m+k-1:Z_1(i) \neq Z_2(i)\} \ \big| \ \hat X_1=p_1,\hat X_2=p_2,Z(0),\ldots,Z(m-1)\right]\geq ck.
\]
\end{definition}
\begin{lem}\label{lem: weak good trajectories}
 Let $m\in \mathbb N$ and $c\in(0,1)$ satisfying the condition of Lemma~\ref{lem: good k steps}, and let $0<\eps<1-\sqrt{1-c}$. There exists $t_0>0$ (that depends on $m$ and $\eps$) such that for any $t>t_0$ the following statement holds. Let $k=\lceil\log^2(t)\rceil$  and $c_1$ be as guaranteed by Lemma~\ref{lem: good k steps}. Then,
\[
\mathbb P\left(\text{$(\hat X_1,\hat X_2)$ are $(c_1,t,k)$-good trajectories} \right)
\geq 1 - t^{-5}.
\]
\end{lem}

\begin{proof}[Proof of Lemma~\ref{lem: weak good trajectories}]
Given $m,c$ satisfying the condition of Lemma~\ref{lem: good k steps}, $\eps>0$ and $k\geq m$, let $c_1,c_2>0$ be as guaranteed by Lemma~\ref{lem: good k steps}. 
For a realization $p=(p_1,p_2)$ of $\hat X_1, \hat X_2$ and $k\in\mathbb N$, let
\[
y_k(p):=\mathbb E[\#\{0\leq i \leq k-1: Z_1(i)\neq Z_2(i)\}\mid \hat X_1 =p_1, \hat X_2=p_2].
\]
For a sequence $a=(a_0,a_1,\ldots)$, let $a^{\geq i}$ be the $i$-th suffix of $a$, namely,  $a^{\geq i}:=(a_i,a_{i+1},\ldots)$.

Consider the events
\[
\mathcal A_k^{i,j}:=\{y_k(\hat X_1^{\geq i},\hat X_2^{\geq j})\leq c_1k\}.
\]
Lemma~\ref{lem: good k steps} ensures that $\mathbb P(\mathcal A_k^{i,j})< e^{-c_2k}$, for every $i,j\in \mathbb N$. Outside the event 
$\bigcup\limits_{0\leq i,j\leq 3t}\mathcal A_k^{i,j}$, the trajectories $\hat X_1,\hat X_2$ are $(c_1,t,k)$-good, therefore, by the union bound,
\[
\mathbb P\left(\bigcup\limits_{0\leq i,j\leq 3t}\mathcal A_k^{i,j}\right)\leq 9t^2e^{-c_{2}k}\leq 9t^{2-c_2\log t}\leq t^{-5},
\]
for every $t$ large enough depending on $m$ and $\eps$. This concludes the proof of Lemma~\ref{lem: weak good trajectories}.
\end{proof}
Following is an immediate corollary of \cite[Theorem~1]{gurel2014localization}.
\begin{cor}\label{cor: bounded variance martingale}
For every $c\in(0,1)$ there exists $\alpha=\alpha(c)>0$ such that for every $n\in\mathbb N$ and every martingale $S_1,\ldots, S_n$ the following holds. Suppose, for every $1\leq i \leq n-1$,
\begin{itemize}
    \item $|S_{i+1}-S_i|\leq n$, and
    \item $\mathrm{Var}(S_{i+1}\mid S_1,\ldots,S_i)\in [c,1].$
\end{itemize}
Then, 
\[
\sup_{x}\mathbb P(S_n=x)\leq n^{-\alpha}.
\]
\end{cor}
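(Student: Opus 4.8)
Since the statement is recorded as an immediate corollary of Theorem~1 of \cite{gurel2014localization}, the plan is simply to spell out the reduction. I would begin by transcribing the hypotheses of the cited theorem into the present notation: it takes a finite martingale together with a uniform lower bound on the conditional variances of its increments and an a priori (size) bound on those increments, and it returns an anti-concentration estimate of the form $\sup_x\mathbb P(S_n=x)\le C n^{-\alpha}$ with $C$ and $\alpha>0$ depending only on the variance lower bound. Our hypotheses supply precisely these inputs: conditional variances $\ge c$ at each of the $n$ steps, and increments bounded by $n$; the upper bound $\mathrm{Var}(S_{i+1}\mid S_1,\dots,S_i)\le 1$ is available for free should the cited theorem use it for normalization.

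With the inputs matched, the deduction is a one-line substitution: apply Theorem~1 of \cite{gurel2014localization} to $(S_i)_{i=1}^n$ to obtain $\sup_x\mathbb P(S_n=x)\le C_1(c)\,n^{-\alpha_1(c)}$, and then shrink the exponent — replace $\alpha_1(c)$ by any $\alpha(c)\in(0,\alpha_1(c))$ small enough that $C_1(c)\,n^{-\alpha_1(c)}\le n^{-\alpha(c)}$ for every $n\ge 1$, which is possible since $C_1(c)$ is a fixed constant. If the cited theorem is phrased with conditioning on an abstract filtration rather than on the tuple $(S_1,\dots,S_i)$, I would first pass to the natural filtration $\sigma(S_1,\dots,S_i)$, to which $(S_i)$ is adapted, so that this distinction makes no difference.

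The real content is not reproved here but lives inside \cite{gurel2014localization}: the reason a martingale whose conditional increment variances are pinned in $[c,1]$ cannot concentrate on any single value, together with the quantification of that spreading. The one point I would verify carefully while writing the reduction is that the form of the increment restriction in our hypothesis — a bound permitted to be as large as the length $n$ — is indeed within the scope of the cited theorem and still yields an exponent depending only on $c$; granting this, the corollary follows with no further estimates, which is exactly why it is stated as ``immediate''.
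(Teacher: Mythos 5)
You are right to single out the increment restriction as the step that needs checking, but it is not a formality you can grant and move on: as stated the corollary is false, so it cannot be an immediate consequence of any true theorem, and the quiet assumption in your write-up — that the cited theorem returns $C$ and $\alpha$ depending only on the variance lower bound, with no dependence on the increment bound — is exactly where the argument breaks. Take $S_1=0$ and $S_{i+1}=S_i+m\,\epsilon_i$, where $m=\lfloor\sqrt n\rfloor$ and $\epsilon_1,\dots,\epsilon_{n-1}$ are i.i.d.\ with $\mathbb P(\epsilon_i=\pm1)=\tfrac1{2m^2}$ and $\mathbb P(\epsilon_i=0)=1-\tfrac1{m^2}$. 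Each increment has modulus at most $m\le n$ and conditional variance exactly $m^2\cdot m^{-2}=1\in[c,1]$ for every $c\le 1$, so both hypotheses hold; yet
\[
\mathbb P(S_n=0)\ \ge\ \Bigl(1-\tfrac1{m^2}\Bigr)^{n-1}\ \longrightarrow\ e^{-1},
\]
which stays bounded away from $0$, so no bound of the form $n^{-\alpha}$ with $\alpha=\alpha(c)>0$ can hold uniformly in $n$. This is precisely the danger you flagged: an increment bound as generous as $n$ permits a lazy walk with rare $\pm\sqrt n$ jumps — unit conditional variance, yet a constant atom at the starting point.

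It follows that Theorem~1 of \cite{gurel2014localization} must carry a constraint that degrades as the increment bound grows (for instance, increments bounded by a constant, with the anti-concentration constant depending on that constant), and a faithful reduction has to record that dependence rather than discard it, which your one-line substitution does not do. The paper itself signals this in Remark~\ref{remark: imporving alpha}, where the companion bound from \cite{armstrong2016local} is quoted with a constant $C(r)$ depending on the increment bound $r$. In the sole place Corollary~\ref{cor: bounded variance martingale} is actually invoked — Lemma~\ref{lem: good trajectories} — the normalized martingale has increments of order $\sqrt k$ with $k=\lceil(\log t)^2\rceil$, i.e.\ polylogarithmic in the martingale's length $\lfloor n/k\rfloor$; there the increment-dependent factor is subpolynomial and can indeed be absorbed by slightly shrinking $\alpha$, so the downstream argument survives. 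But a correct statement (and proof) has to either tighten the hypothesis to increments far smaller than $n$, or weaken the conclusion to include a multiplicative constant depending on the increment bound; as written, neither the corollary nor your reduction does so.
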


To proceed we need some further notations.
\begin{itemize}
    \item Let $S(n):=N_1(t_n)-N_2(t_n)$. Note that $S(n)$ is a martingale adapted to $\mathcal Z_n$ conditioned on $\hat X_1$ and $\hat X_2$.
    \item Let $\tau(n):=\min\{m:N_1(t_m)+N_2(t_m)\geq n\}$. $\tau (n)$ is a stopping time adapted to $\mathcal Z _n$.
\end{itemize}
\begin{lem}\label{lem: good trajectories}
For every $c>0$, there exists $\alpha >0$ such that for every $t$ large enough (depending on $c$) the following holds. Let $k:=\lceil(\log t)^2\rceil$,  $t < n <  3t$, and let $(p_1,p_2)$ be  $(c,t,k)$-good trajectories. Then,
\[
\forall x\quad \mathbb P(S(\tau(n))=x\mid \hat X_1=p_1,\hat X_2=p_2)\leq t^{-\alpha}.
\]
Furthermore, $\alpha$ can be arbitrarily close to $\alpha(\max\{2c-1,c/\sqrt 2\})$ of Corollary~\ref{cor: bounded variance martingale}.
\end{lem}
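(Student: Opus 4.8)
The plan is to condition throughout on $\{\hat X_1=p_1,\ \hat X_2=p_2\}$ and exploit the resulting martingale structure of $S$. Under this conditioning the only randomness left is in the residual clock times: at a step $m$ with $Z_1(m)\neq Z_2(m)$ the two tokens sit on distinct vertices, so exactly one of $N_1,N_2$ advances, each with probability $\tfrac12$ (an independent fair coin), while at a step with $Z_1(m)=Z_2(m)$ both advance together. Hence $S(m)=N_1(t_m)-N_2(t_m)$ is a martingale whose increment is a fair $\pm1$ at the ``disagreement'' steps and $0$ at the ``agreement'' steps. What we want is anti-concentration of $S$ at the \emph{random} step $\tau(n)$, and the difficulty is precisely that $\tau(n)$ is a stopping time: comparing $S(\tau(n))$ with $S$ at a deterministic step, or summing over $\{\tau(n)=m\}$, costs a union bound of order $t$ and is hopeless.

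To circumvent this I would reindex time by the total number of increments. Put $\nu_\ell:=\min\{m:\ N_1(t_m)+N_2(t_m)\ge\ell\}$, so that $\nu_n=\tau(n)$ and $\nu_\ell\le\nu_{\ell+1}\le\nu_\ell+1$, and set $\hat S_\ell:=S(\nu_\ell)$. A short case analysis — according to whether the step $\nu_\ell$ lands exactly on $\ell$ or overshoots it, and whether it is an agreement or a disagreement step — shows that, conditionally on $(p_1,p_2)$, the process $(\hat S_\ell)_{0\le\ell\le n}$ is a martingale for the filtration $\mathcal G_\ell:=\mathcal Z_{\nu_\ell}$, with increments in $\{-1,0,1\}$ and with $\mathrm{Var}(\hat S_{\ell+1}-\hat S_\ell\mid\mathcal G_\ell)\in\{0,1\}$, the value being $1$ precisely at those $\ell$ for which $\nu_\ell$ is a genuine disagreement step not preceded by an overshoot. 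The gain is that $\hat S$ is now indexed by the \emph{deterministic} range $0,\dots,n$ and $\hat S_n=S(\tau(n))$.

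Next I would coarsen and feed the result into Corollary~\ref{cor: bounded variance martingale}. Fix a block length $K$ equal to a constant multiple of $k:=\lceil(\log t)^2\rceil$, large enough that each block of $K$ consecutive $\ell$'s spans at least $k$ genuine steps (this holds once $K\ge 2k$), and let $W_j:=\hat S_{jK}$ for $0\le j\le M$, where $M:=\lfloor n/K\rfloor$, which is of order $t/(\log t)^2$. Summing the fine conditional variances over a block, $\mathrm{Var}(W_{j+1}-W_j\mid\mathcal G_{jK})$ equals the conditional expected number of disagreement steps inside that block; by the $(c,t,k)$-goodness of $(p_1,p_2)$ applied to a length-$k$ sub-window of the block, this is at least $c'K$ for a suitable $c'>0$, and it is trivially at most $K$. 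Rescaling $W_j$ by $\sqrt K$ — so the increment variances lie in $[c',1]$ and the increments have modulus at most $\sqrt K\le M$ — Corollary~\ref{cor: bounded variance martingale} gives $\sup_x\mathbb P(W_M=x\mid\hat X_1=p_1,\hat X_2=p_2)\le M^{-\alpha(c')}$. Since $\hat S_n-\hat S_{MK}$ is a sum of $n-MK<K$ increments each of modulus at most $1$, we conclude
\[
\sup_x\mathbb P\big(S(\tau(n))=x\mid\hat X_1=p_1,\hat X_2=p_2\big)\le 2K\cdot M^{-\alpha(c')}\le t^{-\alpha}
\]
for every $\alpha<\alpha(c')$ and all $t$ large, which is the claim. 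Tracking $c'$ with care — for $c>\tfrac12$ one may take blocks of length $k$ itself, where a disagreement density $c$ over $k$ steps forces at least $(2c-1)k$ disagreement steps per block, and a tighter choice of block length handles $c\le\tfrac12$ — lets one push $c'$ arbitrarily close to $\max\{2c-1,\ c/\sqrt2\}$, yielding the ``furthermore''.

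The hard part is conceptual rather than computational: reconciling martingale anti-concentration with evaluation at the random horizon $\tau(n)$. The reindexing by $\ell=N_1+N_2$ is the crux, since it replaces that random horizon by the fixed one $n$ (up to an error of one block, absorbed by the factor $2K$). What remains — the case analysis giving the martingale property of $\hat S$, the accounting of overshoots and agreement-versus-disagreement steps needed to certify that each coarse increment has conditional variance of order $K$, and the optimization of the constant $c'$ — is routine but must be carried out carefully.
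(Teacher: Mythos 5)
Your proposal is correct and follows essentially the same route as the paper: the coarse-grained martingale $W_j=\hat S_{jK}=S(\tau(jK))$ is precisely the paper's $S_i=S(\tau(ki))$ (with $K$ in the role of $k$ or $2k$), and the variance lower bound via $(c,t,k)$-goodness, the $\sqrt{K}$ rescaling, and the appeal to Corollary~\ref{cor: bounded variance martingale} are the same ingredients; your reindexing by $\ell=N_1+N_2$ is a cleaner bookkeeping of the paper's $\sigma_\ell$ quantities, and your ``absorb the remainder block by a factor $2K$'' replaces the paper's $k_i\in\{k,k+1\}$ device, but neither changes the substance. One caveat worth noting: the claimed optimization to $c'\approx c/\sqrt 2$ for $c\le\tfrac12$ is asserted in your sketch (mirroring the lemma statement) but not actually derived --- the explicit normalization, both in your sketch and in the paper's proof, gives $ck/(2k+1)\to c/2$, not $c/\sqrt2$; since the lemma only needs \emph{some} $\alpha>0$, this does not affect correctness of the main claim.
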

\begin{proof}[Proof of Lemma~\ref{lem: good trajectories}]
We condition on the event
\begin{equation}
 \big\{ \forall m\leq n,\  \hat X_1(m)=p_1(m) \text{ and } \hat X_2(m)=p_2(m) \big\} .
\end{equation}
We first assume that $c>1/2$ and prove the lemma with $\alpha$ arbitrarily close to $\alpha(2c-1)$ of Corollary~\ref{cor: bounded variance martingale}. For simplicity, we assume that $n$ is divisible by $k$.
Consider the martingale $S_0,S_1,\ldots,S_{n/k}$ defined by
\[
S_i:=S(\tau(ki)).
\]
[If $n$ is not divisible by $k$, then since $n>k^2$, one can find integers $k_1,k_2,\ldots,k_{\lfloor n/k\rfloor}$, such that $k_i\in \{k,k+1\}$ and $\sum_i k_i=n$. Let $K_i:= \sum_{\ell=1}^i k_\ell$. The definition of $S_i$ is then amended as $S_i=S(\tau(K_i))$.] 

We would like to apply Corollary~\ref{cor: bounded variance martingale}. Let us verify its conditions. The increments $|S_{i+1}-S_i|$ are bounded by $k$, since 
\begin{equation}\label{eq:S increaments}
    |S(\ell+1)-S(\ell)|=
    \begin{cases}
    1 & \text{iff $(N_1(t_{\ell+1})+N_2(t_{\ell+1}))-(N_1(t_{\ell})+N_2(t_{\ell}))=1$,}\\
    0 & \text{iff $(N_1(t_{\ell+1})+N_2(t_{\ell+1}))-(N_1(t_{\ell})+N_2(t_{\ell}))=2$,}
    \end{cases}
\end{equation}
for every $\ell\in\mathbb N$. 

We now need to bound the variance of the increments of $S_i$. Fix $i\in [n/k]$. The quadratic variation of $S(n)$ between time $\tau(ki)$ and $\tau(ki+\ell)$ is denoted
\begin{equation}
\label{eq:quadratic variation}
\sigma_{\ell}:=\sum_{ m=\tau(ki) }^{ \tau(ki)+\ell -1} \big( S(m+1)-S(m) \big) ^2,
\end{equation}
for every $\ell\in\mathbb N$. 

Let $\tau:=\tau(k(i+1))-\tau (ki)$. We have
\begin{equation}\label{eq:variance S_i}
\mathrm{Var}(S_{i+1}\mid S_0,\ldots,S_i)=\mathbb E[\sigma_{\tau}\mid  \mathcal Z_{\tau(ki)}].
\end{equation}
Note that $\sigma_\ell$ counts the number of $1$-increments up to time $\ell$ and $\ell-\sigma_\ell$ the number of $0$-increments. By \eqref{eq:S increaments} and \eqref{eq:quadratic variation},
\[
\tau+(\tau-\sigma_\tau)\in\{k,k+1\}.
\]
Since $\tau\leq k$, we have $\sigma_\tau\leq \sigma_k$ and $\tau-\sigma_\tau\leq k-\sigma_k$. It follows that $\sigma_\tau\leq \sigma_k\leq \tau\leq k$. Thus, we get
\[
2\sigma_k-k= k-2(k-\sigma_k)\leq \sigma_\tau\leq k+1.
\]
Since $(p_1,p_2)$ are $(c,t,k)$-good trajectories, we have 
$\mathbb E[\sigma_k|\mathcal Z_{\tau(ki)}]\geq ck$. By \eqref{eq:variance S_i}, we get
\[
(2c-1)k\leq \mathbb \mathrm{Var}(S_{i+1}\mid S_0,\ldots,S_i)\leq k+1.
\]
By Corollary~\ref{cor: bounded variance martingale} applied to $\frac {S_i}{\sqrt{k+1}}$, there exists $\alpha=\alpha(2c-1)>0$ such that
\[
\forall x\quad \mathbb P(S(\tau(n))=x\mid \hat X_1=p_1,\hat X_2=p_2)\leq \lfloor n/k\rfloor ^{-\alpha}< t^{-\alpha+\beta},
\]
for arbitrarily small $\beta>0$ and all $t$ large enough.

To complete the proof of the lemma, we relax the assumption that $c>1/2$ and explain how to amend the proof so as to obtain $\alpha$ arbitrarily close to $\alpha(c/\sqrt 2)$. Amend the definition of $S_i$ to be $S_i:=S(\tau(2ki))$. As before, it can be assumed that $n$ is divisible by $2k$. Fix $i$. Amend the definition of $\tau$ to be $\tau(2k(i+1))-\tau (2ki)$. As before, we get $k\leq \tau\leq 2k+1$, and therefore, $\sigma_k\leq \sigma_\tau\leq 2k+1$. If follows that 
\[
ck\leq \mathrm{Var}(S_{i+1}\mid S_0,\ldots,S_i)\leq 2k+1.
\]
Consider the normalized martingale $S_i/\sqrt{2k+1}$ and apply Corollary~\ref{cor: bounded variance martingale} to get the desired bound.
\end{proof}

The following lemma is a weaker version of Proposition~\ref{prop:fragmentation infinite-graph second moment}. Here, the number $\alpha$ depends on $C_0$ rather than being a universal constant.
\begin{lem}\label{lem: weak prop second moment}
There exist numbers $t_0,\alpha >0$ depending on $C_0$, where $\alpha$ depends on $C_0$ only through the number $c_1$ guaranteed by Lemma~\ref{lem: weak good trajectories}, such that for any $t\in (t_0,|V|^2/2)$
\[
\mathbb P(X_1(t)=X_2(t))\leq t^{-\alpha}.
\]
Furthermore, the exponent $\alpha$ can be arbitrarily close to the number $\alpha(c_1)$ guaranteed by Corollary~\ref{cor: bounded variance martingale}.
\end{lem}
\begin{proof}[Proof of Lemma~\ref{lem: weak prop second moment}]
Assumption~\ref{assumption 1/sqrt t} implies the conditions of Lemma~\ref{lemma delta m}, say, with $\delta=\frac 1 2$ and $m=\lceil(2C_0)^2\rceil$). Then, $m$ and $c=\delta/m$ and $\eps\in (0,1-\sqrt{1-c})$ satisfy the condition of Lemma~\ref{lem: weak good trajectories}.

Let $t>t_0>0$, $k:=\lceil (\log t)^2 \rceil$, and $c_1>0$ be given by Lemma~\ref{lem: weak good trajectories}, and $\mathcal A_t$ the event that $(\hat X_1,\hat X_2)$ are $(c_1,t,k)$-good trajectories. Let $I_t:=\{n\in\mathbb N:|n-t|\leq C\sqrt{t\log t}\}$, where $C>0$ is a universal constant such that $\mathbb P(|N_1(t)-t|>C\sqrt{t\log t})<t^{-5}/2$. We have
\begin{equation}\label{eq: X_1(t)=X_2(t)}
\begin{split}
    \mathbb P\big(&X_1(t)=X_2(t) \big) \leq 
    \mathbb P \big( N_1(t)\not\in I_t \big)  +\big(N_2(t)\not\in I_t \big)  + \mathbb P (\mathcal A_t^c) +\\
    &+\sum_{ n_1,n_2 \in I_t}\mathbb P \big( \hat X_1(n_1)=\hat X_2(n_2) \big) \mathbb P \big( N_1(t)=n_1,N_2(t)=n_2\mid \hat X_1(n_1)=\hat X_2(n_2),\mathcal A_t \big) \\
    &\leq 2t^{-5} + 2C_0 t^{-1/2}\sum_{n_1,n_2\in I_t}\mathbb P(N_1(t)=n_1,N_2(t)=n_2\mid \hat X_1(n_1)=\hat X_2(n_2),\mathcal A_t).
\end{split}
\end{equation}
 Where, the last inequality follows from Assumption~\ref{assumption 1/sqrt t}.

By Lemma~\ref{lem:3}, there exists a universal constant $C$ such that for every $t>0$
\begin{equation}\label{eq:N_1+N_2}
\forall x\in I_t+I_t \quad \mathbb P(N_1(t)+N_2(t)=x\mid \mathcal Z_\infty)\leq \frac {C}{\sqrt {t}}.
\end{equation}

Given $n_1,n_2\in I_t$,
\begin{equation}\label{eq: (n_1 - n_2) and (n_1 + n_2)}
\begin{split}
    \mathbb P&(N_1(t)=n_1,N_2(t)=n_2\mid \hat X_1,\hat X_2,\mathcal A_t)\\
    &= \mathbb P(S(\tau(n_1+n_2))=n_1-n_2,\ N_1(t)+N_2(t)=n_1+n_2\mid \hat X_1,\hat X_2,\mathcal A_t)\\
    &=\mathbb P(S(\tau(n_1+n_2))=n_1-n_2\mid \hat X_1,\hat X_2,\mathcal A_t)\cdot \\
    & \quad \quad \cdot \mathbb P(N_1(t)+N_2(t)=n_1+n_2\mid \hat X_1,\hat X_2,\mathcal A_t,S(\tau(n_1+n_2))=n_1-n_2)\\
    &\quad \quad \quad \quad\quad \quad\quad \quad\quad \quad \quad\quad \quad\quad \quad\quad \quad \quad\quad \quad\quad \quad\quad \quad \leq t^{-\alpha}C t^{-1/2}.
\end{split}
\end{equation}
Where, $\alpha=\alpha(c_1)>0$ is given by Lemma~\ref{lem: good trajectories}, and $C$ is the universal constant given by \eqref{eq:N_1+N_2}.

The proof of Lemma~\ref{lem: weak prop second moment} is concluded by plugging \eqref{eq: (n_1 - n_2) and (n_1 + n_2)} into \eqref{eq: X_1(t)=X_2(t)} for all the values of $n_1, n_2$ in the summation.  
\end{proof}
We can now show that the condition of Lemma~\ref{lem: good k steps} holds for every $c\in (0,1)$.
\begin{lem}\label{lem: any c}
For every $c\in (0,1)$, there exists $m$ (depending on $C_0$ and $c$) such that if $m\leq |V|^2/4$ then for  every $v,u\in V$,

\[
\mathbb E_{v,u}[\#\{i\in [m]: Z_1(i)\neq Z_2(i)\}]\geq cm.
\]
\end{lem}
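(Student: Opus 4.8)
The plan is to bootstrap from the weak meeting estimate of Lemma~\ref{lem: weak prop seoncd moment} by a random time change. Let $0=t_0<t_1<\cdots$ be the joint step times, $\xi_i:=t_{i+1}-t_i$ the holding times, and set $A_m:=\#\{0\le i<m:\ Z_1(i)=Z_2(i)\}$ and $D_m:=m-A_m$. Since the index sets $\{1,\dots,m\}$ and $\{0,\dots,m-1\}$ differ by a single element, $\bigl|\#\{i\in[m]:Z_1(i)\ne Z_2(i)\}-D_m\bigr|\le 1$, so it suffices to prove $\mathbb E_{v,u}[A_m]=o(m)$ as $m\to\infty$, uniformly in $v,u$; this yields $\mathbb E_{v,u}[D_m]\ge c_1 m$ for $m$ large depending on $C_0$ and $c_1$, and absorbs the $\pm1$ correction.

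The first step is a time-change identity. Conditioned on the discrete joint trajectory $(Z(i))_{i\ge0}$, the $\xi_i$ are independent with $\xi_i\sim\mathrm{Exp}(1)$ when $Z_1(i)=Z_2(i)$ (only the clock at the common vertex matters) and $\xi_i\sim\mathrm{Exp}(2)$ otherwise. Writing $L_t:=\int_0^t\mathbf{1}_{\{X_1(s)=X_2(s)\}}\,ds$ for the continuous-time agreement local time, the integrand equals $\mathbf{1}_{\{Z_1(i)=Z_2(i)\}}$ on $[t_i,t_{i+1})$, so $L_{t_m}=\sum_{i=0}^{m-1}\xi_i\mathbf{1}_{\{Z_1(i)=Z_2(i)\}}$, and taking expectations gives $\mathbb E_{v,u}[L_{t_m}]=\mathbb E_{v,u}[A_m]$. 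I would then pass to a deterministic horizon: each $\xi_i$ is stochastically dominated by an $\mathrm{Exp}(1)$ variable, so $t_m$ is dominated by a sum $G_m$ of $m$ i.i.d.\ $\mathrm{Exp}(1)$'s, whence $\mathbb E_{v,u}[t_m\mathbf{1}_{\{t_m>2m\}}]\le\mathbb E[G_m\mathbf{1}_{\{G_m>2m\}}]$ is exponentially small in $m$; using monotonicity of $t\mapsto L_t$ and $L_t\le t$,
\[
\mathbb E_{v,u}[A_m]=\mathbb E_{v,u}[L_{t_m}]\le \mathbb E_{v,u}[L_{2m}]+\mathbb E_{v,u}[t_m\mathbf{1}_{\{t_m>2m\}}]\le \mathbb E_{v,u}[L_{2m}]+e^{-cm}.
\]

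Finally I would insert the meeting bound. Lemma~\ref{lem: weak prop seoncd moment} provides $t_*=t_*(C_0)$ and $\alpha=\alpha(C_0)\in(0,1)$ with $\mathbb P_{w,w}(X_1(s)=X_2(s))\le s^{-\alpha}$ for $t_*<s<|V|^2/2$; conditioning on the clocks and applying Cauchy--Schwarz extends this to arbitrary starting vertices, since $\mathbb P_{v,u}(X_1(s)=X_2(s))\le\sqrt{\mathbb P_{v,v}(X_1(s)=X_2(s))\,\mathbb P_{u,u}(X_1(s)=X_2(s))}$. Hence, bounding the probability by $1$ on $[0,t_*]$,
\[
\mathbb E_{v,u}[L_{2m}]=\int_0^{2m}\mathbb P_{v,u}(X_1(s)=X_2(s))\,ds\le t_*+\frac{(2m)^{1-\alpha}}{1-\alpha},
\]
valid for $2m<|V|^2/2$. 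Combining the displays, $\mathbb E_{v,u}[A_m]\le t_*+(2m)^{1-\alpha}/(1-\alpha)+e^{-cm}=o(m)$, as required.

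The argument is short once the time-change identity is set up; the points needing care are (i) transferring the meeting estimate of Lemma~\ref{lem: weak prop seoncd moment} from a common starting vertex to arbitrary $v,u$ --- handled by the one-line conditional Cauchy--Schwarz above --- and (ii) the standing range restriction $m\lesssim|V|^2$ inherited from Lemma~\ref{lem: weak prop seoncd moment}, which is harmless in the intended applications (indeed the statement is vacuous, and in fact false, for very small $V$, e.g.\ $V$ a single edge with both walks started together, so some such restriction is unavoidable). The genuine hard work has already been done in establishing Lemma~\ref{lem: weak prop seoncd moment}.
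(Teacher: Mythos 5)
Your proof is correct and takes essentially the same route as the paper: relate the discrete agreement count to the continuous-time local time $L_t=|A\cap[0,t]|$ via the holding-time structure (conditionally $\mathrm{Exp}(1)$ on agreements), integrate the meeting bound $\mathbb P(X_1(s)=X_2(s))\le s^{-\alpha}$ from Lemma~\ref{lem: weak prop seoncd moment} to get $\mathbb E[L_{2m}]=O(m^{1-\alpha})$, and pass from the random horizon $t_m$ to the deterministic horizon $2m$ by stochastic domination by a $\mathrm{Gamma}(m,1)$ variable. The one genuine addition is your conditional Cauchy--Schwarz step to transfer the meeting bound from common starting vertices to $\mathbb P_{v,u}$; this is a clean fix, though in fact it is not needed, because Lemma~\ref{lem: weak prop seoncd moment} already holds for arbitrary starting pairs: its proof is built on Lemmas~\ref{lem: good k steps} and~\ref{lem: weak good trajectories}, which are stated under $\mathbb P_{v,u}$, and the single use of Assumption~\ref{assumption 1/sqrt t} there bounds $\mathbb P(\hat X_1(n_1)=\hat X_2(n_2))=\sum_w P^{n_1}_{v,w}P^{n_2}_{u,w}\le \sup_w P^{n_1}_{v,w}\le C_0 n_1^{-1/2}$ regardless of the starting vertices.
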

\begin{proof}[Proof of Lemma~\ref{lem: any c}]
Fix $v,u\in V$ and $c\in (0,1)$. Let $A:=\{t\geq 0: X_1(t)=X_2(t)\}$. By Lemma~\ref{lem: weak prop second moment}, there exists $\alpha>0$ and $t_0$ (depending on $C_0$) such that
\[
\mathbb E_{v,u}[|A\cap [0,t]|]\leq t_0+ \int_{t_0}^t s^{-\alpha}\diff s \leq t_0 + 2t^{1-\alpha},
\]
for every $t<|V|^2/2$.

Let $m\leq |V|^2/4$. Let $Y_m:=\#\{i\in [m]: Z_1(i)= Z_2(i)\}$. Note that $|A\cap [t_1,t_{m+1}]|$ is a sum of $Y_m$ unit exponential random variables. Therefore,
\begin{equation}
\mathbb E_{v,u}[Y_m]\leq \mathbb E_{v,u}[|A\cap [0,t_{m+1}]|]\leq \mathbb E_{v,u}[|A\cap [0,2m]|]+\mathbb E_{v,u}[(t_{m+1}-2m)^+].
\end{equation}
Let $S_{m+1}$ be the sum of $m+1$ independent unit exponential random variables. Since $S_{m+1}$ stochastically dominates $t_{m+1}$ (indeed, $S_{m+1}$ can be realized as the time of the $m+1$-th jump of $X_1(t)$),
\[
\mathbb E_{v,u}[Y_m]\leq t_0+4m^{1-\alpha} + \mathbb E[(S_{m+1}-2m)^+]\leq  t_0+4m^{1-\alpha} + me^{-\frac m 8}\leq (1-c)m,
\]
for any $m$ large enough depending on $C_0$ and $c$.
\end{proof}

Proposition~\ref{prop:fragmentation infinite-graph second moment} is concluded from Claim~\ref{claim:1} and the following lemma.
\begin{lem}\label{lem: strong prop second moment}
There exists a universal constant $\alpha >0$ and number $t_0>0$ depending on $C_0$, such that for any $t\in(t_0,|V|^2/2)$,
\[
\mathbb P(X_1(t)=X_2(t))\leq t^{-\alpha}.
\]
\end{lem}
\begin{proof}[Proof of Lemma~\ref{lem: strong prop second moment}]
Lemma~\ref{lem: any c} ensures that there exists $m$ depending on $C_0$ such that 
\[
\mathbb E_{v,u}[\#\{i\in [m]:Z_1(i)\neq Z_2(i)\}]\geq m/2,
\]
as long as $|V|^2\geq 4m$.
Plugging this to Lemma~\ref{lem: good k steps}, provides constants $c_1,c_2>0$ such that
\[
\mathbb P_{v,u}(\mathbb E_{v,u} [\#\{1\leq i\leq k:Z_1(i) \neq Z_2(i)\}\mid \hat X_1,\hat X_2]>c_1 k)\geq 1- \exp(-{c_2}k),
\]
for every $k$ large enough depending on $C_0$ (e.g., $k\geq m^2$).

The proof of Lemma~\ref{lem: strong prop second moment} is concluded by applying Lemma~\ref{lem: weak prop second moment} with the constant $c_1$.
\end{proof}
\begin{remark}\label{remark: imporving alpha}
In the proof of Lemma~\ref{lem: strong prop second moment}, we could have chosen $c_1$ arbitrarily close to $1$. Therefore, Lemma~\ref{lem: strong prop second moment} and Proposition~\ref{prop:fragmentation infinite-graph second moment} holds with respect to any $\alpha<\alpha^*:=\sup\limits_{c\in(0,1)}\alpha(c)$, where $\alpha(c)$ is given by Corollary~\ref{cor: bounded variance martingale}. The result of \cite{gurel2014localization} provides some $\alpha^*<\frac 1 2$. 

We conjecture that the true value of $\alpha^*$ is $\frac 1 2$. An indication to that is a result in \cite{armstrong2016local} (see Corollary~1.1 and equation~(1.10) therein), that shows that in Corollary~\ref{cor: bounded variance martingale}, under the stricter assumption that $|S_i-S_{i-1}|\leq r$,   
\[
\sup_{x} \mathbb P(S_n=x)\leq C(r)n^{-\alpha(c)},
\]
where $\lim\limits_{c\nearrow 1}\alpha(c)=\frac 1 2$. 

It is not shown in \cite{armstrong2016local} how $C(r)$ depends on $r$. We believe that proving that $C(r)$ is polynomial in $r$ should be sufficient for concluding that any $\alpha\in \left(0,\frac 1 2\right)$  can by used in Proposition~\ref{prop:fragmentation infinite-graph second moment}.  
\end{remark}

\subsection{Proof of Proposition~\ref{prop:fragmentation infinite-graph d-th moment}}

Throughout the proof we think of $C_0$, $C_1$ and $\alpha $ as fixed and allow the constants $C$ and $c$ to depend on them.
Let $X_1(t),\ldots,X_d(t)$ be the Markov chains of Claim~\ref{claim:1}. By the assumption of the proposition and Claim~\ref{claim:1} we have for all $i<j\le d$ that 
\begin{equation}
    \mathbb P \big( X_i(t)=X_j(t) \big)\le C_1 t^{-\alpha }.
\end{equation}
For $i<j\le d$ define the set of times 
\begin{equation}
    A_{i,j}:=\{t>0 :  X_{i}(t)=X_{j}(t)\}.
\end{equation}

We start with the following lemma.

\begin{lem}\label{lem:4}
For all $k$ sufficiently large (depending on $C_0,C_1,\alpha $) we have 
\begin{equation}
    \mathbb P \big( |A_{i,j}\cap [0,t]|\ge kt^{1-\alpha } \big) \le e^{-ck},
\end{equation}
where $|A_{i,j}\cap [0,t]|$ denotes the Lebesgue measure of the set $A_{i,j}\cap [0,t]$.
\end{lem}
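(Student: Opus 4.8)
The plan is to control every moment of $L:=|A_{i,j}\cap[0,t]|=\int_0^t\mathbf{1}_{\{X_i(s)=X_j(s)\}}\,ds$ and then convert the moment bounds into the exponential tail. By Fubini and symmetry in the time variables,
\[
\mathbb E[L^\ell]=\ell!\int_{0<s_1<\cdots<s_\ell<t}\mathbb P\big(X_i(s_r)=X_j(s_r)\text{ for all }r\le\ell\big)\,ds_1\cdots ds_\ell .
\]
The key step is the multi-time coincidence estimate: for $0=s_0<s_1<\cdots<s_\ell\le t$,
\[
\mathbb P\big(X_i(s_r)=X_j(s_r)\text{ for all }r\le\ell\big)\le\prod_{r=1}^\ell C_1\,(s_r-s_{r-1})^{-\alpha}.
\]
I would prove this by conditioning successively at the times $s_1,\dots,s_\ell$: since the vertex clocks are Poisson, conditionally on the history up to $s_r$ and on $\{X_i(s_r)=X_j(s_r)=v\}$ the pair $(X_i(s_r+\cdot),X_j(s_r+\cdot))$ is distributed as the pair started from $(v,v)$, so each factor is bounded by $\sup_v\mathbb P_{v,v}(X_i(u)=X_j(u))$ with $u=s_r-s_{r-1}\le t$; by Claim~\ref{claim:1} this supremum equals $\sup_v\mathbb E[\sum_w m_u(v,w)^2]\le C_1u^{-\alpha}$, where we invoke the hypothesis of Proposition~\ref{prop:fragmentation infinite-graph d-th moment} for the fragmentation process started at \emph{every} vertex (and $u\le t\le|V|^2/3$).

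Next I would evaluate the resulting integral. Substituting $r_k=s_k-s_{k-1}$ turns it into the Dirichlet integral $\int_{r_k\ge0,\ \sum_k r_k\le t}\prod_k r_k^{-\alpha}\,dr=t^{\ell(1-\alpha)}\Gamma(1-\alpha)^\ell/\Gamma(1+\ell(1-\alpha))$, so that
\[
\mathbb E[L^\ell]\le\ell!\,\big(C_1\Gamma(1-\alpha)\big)^\ell\,\frac{t^{\ell(1-\alpha)}}{\Gamma(1+\ell(1-\alpha))}\le \big(C_2\ell^{\alpha}\big)^\ell\,t^{\ell(1-\alpha)}
\]
for a constant $C_2=C_2(C_1,\alpha)$, the last inequality being Stirling (a polynomial-in-$\ell$ factor being absorbed into $C_2$). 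By Markov's inequality $\mathbb P(L\ge k\,t^{1-\alpha})\le\inf_{\ell\ge1}(C_2\ell^\alpha/k)^\ell$, and choosing $\ell$ of order $(k/C_2)^{1/\alpha}$ gives $\mathbb P(L\ge kt^{1-\alpha})\le e^{-ck^{1/\alpha}}\le e^{-ck}$ for all $k\ge k_0$, with $k_0,c$ depending only on $C_1,\alpha$ (using $1/\alpha>1$ since $\alpha<1$). Equivalently one may note $\ell^{\alpha\ell}/\ell!\le(e\ell^{\alpha-1})^\ell\to0$ super-exponentially, so $\mathbb E[e^{\lambda L/t^{1-\alpha}}]$ is finite and bounded uniformly in $t\le|V|^2/3$ for every $\lambda>0$.

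It remains to handle the bounded range $1\le k<k_0$, for which it suffices that $\mathbb P(L\ge t^{1-\alpha})$ be bounded away from $1$ uniformly in $t\le|V|^2/3$; shrinking $c$ then covers this range. This follows from $\mathbb E[L]=\int_0^t\mathbb P(X_i(s)=X_j(s))\,ds\le C_3t^{1-\alpha}$ together with the fact, forced by Assumption~\ref{assumption 1/sqrt t} (cf.\ Lemma~\ref{lem: any c}), that with probability bounded below the two chains are at distinct vertices by a bounded time, after which the conditional expected coincidence time is still $O(t^{1-\alpha})$, so Markov's inequality gives $\mathbb P(L\ge t^{1-\alpha})\le1-\delta_0$. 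I expect the multi-time coincidence estimate to be the crux: the subtlety is that the second-moment hypothesis must be applied with the two chains restarted from an arbitrary common vertex — precisely why that hypothesis is imposed for the fragmentation process originating at every vertex — whereas the Dirichlet/Stirling bookkeeping and the small-$k$ adjustment are routine.
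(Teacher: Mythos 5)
Your proof is correct and its skeleton coincides with the paper's: expand $\mathbb E[L^\ell]$ by Fubini and symmetry, control the multi-time coincidence probability $\mathbb P(s_1,\dots,s_\ell\in A_{i,j})$ by conditioning successively and applying the assumed second-moment bound to the pair of chains restarted from the common vertex $X_i(s_{r-1})=X_j(s_{r-1})$, and then convert the moment bound into an exponential tail by Markov. (You are right that the restart step is the crux, and that this is precisely why the hypothesis is stated ``for the fragmentation process originating at any vertex''.) Where you differ from the paper is in the bookkeeping at the end: the paper bounds the nested integral crudely by $\bigl(t^{1-\alpha}/(1-\alpha)\bigr)^k$ to get $\mathbb E[L^k]\le C^k k!\,t^{(1-\alpha)k}$, and then applies Markov with exponent $\ell=k$ and a \emph{scaled} threshold $C_2kt^{1-\alpha}$, choosing $C_2$ large; this closes in one line for every integer $k\ge1$ and folds $C_2$ into the final constant $c$. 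You instead keep the threshold fixed at $kt^{1-\alpha}$, use the exact Dirichlet integral plus Stirling to obtain the sharper $\mathbb E[L^\ell]\le(C_2\ell^\alpha)^\ell t^{(1-\alpha)\ell}$, and optimize over $\ell$; this yields the strictly stronger tail $e^{-ck^{1/\alpha}}$ (with $1/\alpha>1$) but forces a separate argument for $1\le k<k_0$, which you supply. Both versions are fine; the paper's choice $\ell=k$ with an inflated constant is the more economical way to reach the stated $e^{-ck}$, while your version records a quantitative strengthening that isn't needed downstream.
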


\begin{proof}
Let $i\neq j$ and $k\in \mathbb N$. We start by bounding the $k$-th moment of $|A_{i,j}\cap [0,t]|$. We have that 
\begin{equation}
\begin{split}
    \mathbb E \big[ |A_{i,j}\cap [0,t]| ^k\big]&=\mathbb E \big[\text{Vol} \big( A_{i,j}^k\cap [0,t]^k \big) \big]= \int _{[0,t]^k} \mathbb P \big( (s_1,\dots ,s_k) \in A_{i,j}^k \big) ds\\
    &=k!\int _0^t \int _{s_1}^t\cdots \int _{s_{k-1}} ^t \mathbb P \big( \forall m\le k, \ s_m \in A_{i,j} \big)  ds_k\cdots ds_1,
\end{split}
\end{equation}
where the last equality is by symmetry. Now, by Proposition~\ref{prop:fragmentation infinite-graph second moment}, for all $0=s_0\le s_1\le \cdots \le s_k$ we have that \begin{equation}
\begin{split}
    \mathbb P \big( \forall m\le k, \ s_m \in A_{i,j} \big)&=\mathbb P \big( \forall m\le k, \ X_i(s_m)=X_j(s_m) \big)\\
    &= \prod _{m=1}^k \mathbb P \left( X_i(s_m)=X_j(s_m) \ \big| \ \forall l<m, \  X_i(s_l)=X_j(s_l)   \right)\\
    &\le \prod _{m=1}^k C_1(s_m-s_{m-1})^{-\alpha }. 
\end{split}
\end{equation}
Thus, we obtain that 
\begin{equation}
    \mathbb E \big[ |A_{i,j}\cap [0,t]| ^k\big]\le k!C^k\int _0^t s_1^{-\alpha } \int _{s_1}^t (s_2-s_1)^{-\alpha }\cdots \int _{s_{k-1}} ^t  (s_k-s_{k-1})^{-\alpha }  ds_k\cdots ds_1 \le C^kk!t^{(1-\alpha )k}.
\end{equation}
By Markov's inequality, we have for all $k\ge 1$
\begin{equation}
    \mathbb P \big( |A_{i,j}\cap [0,t]| \ge C_2 kt^{1-\alpha } \big) \le  C_2^{-k}k^{-k}t^{-(1-\alpha )k} \cdot \mathbb E \big[ |A_{i,j}\cap [0,t]| ^k\big] \le (C/C_2)^k \le e^{-k},
\end{equation}
where the last inequality holds as long as $C_2$ is sufficiently large. This completes the proof of the lemma.
\end{proof}

Let $\tilde{X}_i$ be the trajectory of $X_i$. Namely, $\{\tilde{X}_i(n)\}_{n=0}^\infty$ is the discrete walk such that $X_i(t)=\tilde{X}_i(N_i(t))$. Let $A:=\bigcup _{i\neq j}A_{i,j}$ and note that if $|A\cap [0,t]| \ge t^{1-\alpha /2}$ then there are $i<j\le d$ such that $|A_{i,j}\cap [0,t]| \ge d^{-2}t^{1-\alpha /2} \ge t^{1-4\alpha /5}$. Thus, as long as $t$ is sufficiently large (depending on $C_0,C_1,\alpha $) by Lemma~\ref{lem:4} and a union bound, we have
\begin{equation}
\begin{split}
  \mathbb E \big[  \mathbb P \big( |A\cap [0,t]| \ge t^{ 1-\alpha /2 } \ \big| \ \tilde{X}_i, i\le d &     \big)\big]=\mathbb P \big( |A\cap [0,t]| \ge t^{ 1-\alpha /2 }  \big) \\
  \le \sum _{i< j} \mathbb P \big( |A_{i,j}\cap &[0,t]| \ge t^{ 1-4\alpha /5  }  \big) \le  d^2 \exp 
  (-ct^{\alpha /5}) \le t^{-2d} ,
 \end{split}
\end{equation}
where in the last inequality we used that $d\le t^{\alpha /10}$. Thus, by Markov's inequality
\begin{equation}
    \mathbb P \left(  \mathbb P \left( |A\cap [0,t]| \ge t^{1-\alpha /2 } \ \big| \ \tilde{X}_i, i\le d \right) \ge t^{-d} \right) \le t^{-d}.
\end{equation}
Let $\mathcal P _t$ be the set of infinite random walk trajectories $(p_1,\dots ,p_d)$ for which 
\begin{equation}\label{eq:748}
    \mathbb P \big( |A\cap [0,t]| \ge t^{1-\alpha /2 } \ | \ \forall i, \   \tilde{X}_i=p_i   \big) \le t^{-d}.
\end{equation}
We have that 
\begin{equation}\label{eq:P_t}
    \mathbb P \big( (\tilde{X}_1,\dots ,\tilde{X}_d)\in \mathcal P _t  \big) \ge 1-t^{-d}.
\end{equation}

\begin{lem}\label{lem:poisson clocks}
Let $(p_1,\dots ,p_d)\in \mathcal P _t$ and recall that $N_i(t)$ is the number of steps taken by the walk $X_i$ up to time $t$. For any $n_1,\dots ,n_d $ we have that 
\begin{equation}
    \mathbb P \left(\forall i,\  N_i(t)=n_i \ \big| \ \forall i, \ \tilde{X}_i=p_i \right)\le t^{-\alpha (d-1)/6-1/2}.
\end{equation}
\end{lem}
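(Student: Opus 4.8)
The plan is to reduce the statement to a one-dimensional anticoncentration problem for the total number of clock rings $N_1(t)+\cdots+N_d(t)$ along good trajectories, exploiting the structure already set up for $d=2$. First I would condition on the event $\{\forall i,\ \tilde X_i=p_i\}$ for a fixed tuple $(p_1,\dots,p_d)\in\mathcal P_t$, and observe that under this conditioning the joint clock process is a continuous-time Markov chain whose jump rate at a given discrete state is $d$ times one, but whose \emph{effective} increment structure is governed by the coincidence pattern of the trajectories: at a time when the current positions $p_1(N_1),\dots,p_d(N_d)$ occupy $r$ distinct vertices, a single clock ring advances all $d-r$ chains sitting at the ``overloaded'' position — no wait, more carefully, a ring of the clock at a vertex $v$ advances \emph{every} chain currently located at $v$. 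So the number of chains advanced at a given ring equals the multiplicity of the vertex whose clock rang. Away from the set $A=\bigcup_{i\ne j}A_{i,j}$ (all positions distinct) each ring advances exactly one chain; on $A$ a ring can advance several.

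The key quantitative input is \eqref{eq:748}: along a trajectory in $\mathcal P_t$, the Lebesgue measure $|A\cap[0,t]|$ is at most $t^{1-\alpha/2}$ except with conditional probability $t^{-d}$. Since the clocks ring at rate at most $d$, the number of rings occurring while the configuration lies in $A$ is, with conditional probability $\ge 1-t^{-d}-e^{-ct^{\,\alpha/2}}$, at most $O\!\big(d\,t^{1-\alpha/2}\big)$. Consequently, off this bad event, the total ring count $R(t):=\sum_i N_i(t)$ differs from the count of ``simple'' rings (those advancing exactly one chain) by at most $O(d\,t^{1-\alpha/2})$, and each chain's individual count $N_i(t)$ is controlled similarly. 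The plan is then to run the martingale/anticoncentration machinery of Lemma~\ref{lem: good trajectories} and Corollary~\ref{cor: bounded variance martingale} on the sub-sequence of simple rings: on that sub-sequence the process $(N_1,\dots,N_d)$ behaves like $d$ chains advanced one at a time with i.i.d.-in-blocks increments, and the relevant $(d-1)$-dimensional difference martingale $\big(N_1-N_2,\ N_2-N_3,\ \dots,\ N_{d-1}-N_d\big)$ has, in each coordinate and along good trajectories, conditional variance bounded below by a constant times the block length. Slicing time into blocks of length $k=\lceil(\log t)^2\rceil$ and applying Corollary~\ref{cor: bounded variance martingale} to each of the $d-1$ coordinate martingales (suitably normalized) gives, for each coordinate, an anticoncentration bound of order $t^{-\alpha'}$ with $\alpha'=\alpha/6$ after absorbing the logarithmic factors; since the $d-1$ differences together with the total $R(t)$ determine all the $N_i$, and $R(t)=N_1(t)+N_2(t)$ restricted to the pair machinery already gives a $t^{-1/2}$ bound via Lemma~\ref{lem:3}, multiplying the bounds across the $d-1$ independent-enough directions and the one total-count direction yields $t^{-\alpha(d-1)/6-1/2}$.

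Concretely, the steps in order: (1) fix $(p_1,\dots,p_d)\in\mathcal P_t$ and set up the conditioned clock chain, identifying simple rings versus rings inside $A$; (2) using \eqref{eq:748} and a Poisson tail bound on the total number of rings in $[0,t]$, show that outside an event of conditional probability $\le 2t^{-d}$ the number of non-simple rings is $\le t^{1-\alpha/3}$, so it suffices to bound the anticoncentration of the simple-ring-driven process up to an additive error absorbed in the exponent; (3) restrict to the simple-ring sub-sequence and, for each pair $(i,i+1)$, verify the hypotheses of Corollary~\ref{cor: bounded variance martingale} for the block-martingale $S^{(i)}_j$ built from $N_i-N_{i+1}$ exactly as in the proof of Lemma~\ref{lem: good trajectories}, using that $(p_i,p_{i+1})$ are good trajectories in the $(c,t,k)$ sense (which holds for $\mathcal P_t$-trajectories after intersecting with the good-trajectory events, up to a further $t^{-5}$ loss summed over the $\binom d2$ pairs); (4) combine the $d-1$ coordinate bounds with the $t^{-1/2}$ bound on $N_1(t)+N_2(t)$ from Lemma~\ref{lem:3} via conditional independence of the coordinate differences given the trajectories, and bookkeep the constants so that the product is at most $t^{-\alpha(d-1)/6-1/2}$ for $t\ge d^{10/\alpha}$ and $t\ge t_0(C_0,C_1,\alpha)$.

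The main obstacle I expect is Step (3)–(4): the $d-1$ difference martingales are not genuinely independent (they all read off the same underlying ring sequence), so one cannot naively multiply the marginal anticoncentration bounds. The fix is to peel them off one at a time — condition on $\hat X_1,\dots,\hat X_d$ and on $N_d(t)$, then on $N_{d-1}(t)$ given $N_d(t)$, and so on — at each stage applying Corollary~\ref{cor: bounded variance martingale} to the conditional law of $N_i(t)-N_{i+1}(t)$, which remains a martingale with the right variance lower bound because conditioning on the \emph{values} at the terminal time of the later differences does not destroy the block-variance structure of the earlier one (this is the same trick used for $N_1,N_2$ via $S(\tau(\cdot))$). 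Verifying that the conditional-variance lower bound of Lemma~\ref{lem: good trajectories} survives this sequential conditioning — i.e., that ``$(c,t,k)$-goodness'' of the pair $(p_i,p_{i+1})$ still forces $\mathrm{Var}(S^{(i)}_{j+1}\mid S^{(i)}_0,\dots,S^{(i)}_j,\ \text{later }N\text{'s})\ge c'k$ — is the technical heart of the argument; once it is in place, the counting of exponents to reach $\alpha(d-1)/6-1/2$ is routine.
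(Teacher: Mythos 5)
Your plan is a genuinely different route from the paper's, and I think it has a fatal exponent mismatch. The paper's proof does not redo the martingale anticoncentration machinery of Lemma~\ref{lem: good trajectories} at all. Instead it introduces $d$ \emph{auxiliary, genuinely independent} Poisson processes $\tilde N_1,\dots,\tilde N_d$, couples each $N_i$ to $\tilde N_i$ so that they agree at all ring times outside $A$ (and jump independently on $A$), observes that $N_i-\tilde N_i$ is a martingale with predictable quadratic variation $|A\cap[0,s]|$, and applies Freedman's inequality together with the defining property $|A\cap[0,t]|\le t^{1-\alpha/2}$ of $\mathcal P_t$ to conclude $|N_i(t)-\tilde N_i(t)|\le t^{1/2-\alpha/6}$ off an event of probability $O(t^{-d})$. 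Then, since $N_1,\tilde N_2,\dots,\tilde N_d$ are mutually independent, the target probability factors exactly into $\mathbb P(N_1(t)=n_1)\prod_{i\ge2}\mathbb P\bigl(|\tilde N_i(t)-n_i|\le t^{1/2-\alpha/6}\bigr)$; each factor in the product is bounded by Poisson anticoncentration over an interval of width $t^{1/2-\alpha/6}$, giving $\approx t^{-\alpha/6}$ per factor. The crucial point is that the exponent $\alpha/6$ is \emph{inherited from the input exponent $\alpha$} (the assumed second-moment decay rate), via the size $t^{1-\alpha/2}$ of $A$ and the resulting Freedman window $t^{1/2-\alpha/6}$.

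Your proposal instead routes through Corollary~\ref{cor: bounded variance martingale} applied to difference martingales $N_i-N_{i+1}$, and asserts that this yields anticoncentration $t^{-\alpha'}$ with $\alpha'=\alpha/6$. This is where the argument breaks. The exponent produced by Corollary~\ref{cor: bounded variance martingale} is a universal constant $\alpha(c)$ depending only on the conditional variance ratio $c$, and bears no relation to the input exponent $\alpha$ from the hypothesis $\mathbb E[\sum_v m_t(v)^2]\le C_1 t^{-\alpha}$. There is no mechanism in your argument by which the martingale anticoncentration constant would scale as $\alpha/6$; for small input $\alpha$ you would need a correspondingly small per-coordinate gain, and for input $\alpha$ close to $1/2$ you would need a larger one, but the corollary gives a single fixed constant regardless. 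So the ``routine bookkeeping'' in Step (4) cannot land on the stated exponent. (A secondary issue you flag yourself — the $d-1$ difference martingales all read the same ring sequence and are not independent, so the product decomposition needs justification — is also real, and the ``peel off one at a time'' fix is not obviously correct since conditioning on terminal values of later differences alters the conditional law and the variance lower bound is no longer guaranteed. The paper sidesteps this entirely because its $\tilde N_i$ are independent by construction.) If you want to rescue the approach you would have to replace the use of Corollary~\ref{cor: bounded variance martingale} with an estimate whose output exponent tracks $|A\cap[0,t]|$; at that point you have essentially reinvented the paper's Freedman-plus-coupling argument.
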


\begin{proof}
Let $\tilde{N}_i$ for $i\le d$ be Poisson processes independent of each other and independent of $\tilde{X}_i$ for all $i$. We couple $N_i$ and $\tilde{N}_i$ in the following way. For any $s\notin A$ we let $N_i(s)$ jump whenever $\tilde{N}_i(s)$ jumps and for $s\in A$ we let $N_i$ jump according to Poisson clocks on the vertices that are independent of $\tilde{N}_i$ and $\tilde{X}_i$ for $i\le d$. It is clear that the processes $N_i$ defined in this way have the right law. Moreover, for all $i\neq j$ the Poisson processes $\tilde{N}_i$ and $N_j$ are independent.

Next, note that the process $N_i(s)-\tilde{N}_i(s)$ is a martingale and its predictable quadratic variation is $|A\cap [0,s]|$. Thus, by Freedman's inequality (see \cite[Lemma 2.1]{van1995exponential}) we have that 
\begin{multline}\label{eq:freedman}
    \mathbb P \left( |N_i(t)-\tilde{N}_i(t)|\ge t^{1/2-\alpha /6}   \ \text{ and } \  |A\cap [0,t]|\le t^{1-\alpha /2}  \ \big| \ \forall i, \ \tilde{X}_i=p_i  \right) \\
    \le \exp\left(-\frac{t^{1-\alpha /3}}{2(t^{1/2-\alpha /6} +t^{1-\alpha /2})}\right)\leq \exp (-t^{\alpha /6}/3)\le t^{-2d}.
\end{multline}
Therefore, by the definition of $\mathcal P _t$ we have 
\begin{equation}
    \mathbb P \left( \exists i, \ |N_i(t)-\tilde{N}_i(t)|\ge t^{1/2-\alpha /6 }  \ \big| \ \forall i, \ \tilde{X}_i=p_i  \right) \le 2t^{-d}.
\end{equation}
It follows that
\begin{equation}
\begin{split}
    &\mathbb P \left(\forall i,\  N_i(t)=n_i \ \big| \ \forall i, \ \tilde{X}_i=p_i \right) \\
    &\quad \quad \le \mathbb P \left(N_1(t)=n_1, \  \forall i\ge 2, \ |\tilde{N}_i(t)-n_i|\le t^{1/2-\alpha /6 } \ \big| \ \forall i, \ \tilde{X}_i=p_i \right)  \\
    &\quad \quad\quad \quad\quad \quad\quad \quad\quad \quad\quad \quad\quad \quad +\mathbb P \left( \exists i\ge 2, \ |N_i(t)-\tilde{N}_i(t)|\ge t^{1/2-\alpha /6 }     \ \big| \ \forall i, \ \tilde{X}_i=p_i \right) \\
    &\quad \quad \le  \mathbb P \big( N_1(t)=n_1 \big) \prod _{i=2}^d \mathbb P \left( |\tilde{N}_i(t)-n_i|\le t^{1/2-\alpha /6}    \right)  +2t^{-d} \\
    &\quad \quad \le t^{-1/2}\big(  t^{-\alpha /6 }/2  \big)^{d-1} +2t^{-d} \le t^{-\alpha (d-1)/6-1/2}.
\end{split}
\end{equation}
where in the second to last inequality we used that $\mathbb P (\text{Poisson} (t)=n) \le 1/(2\sqrt{t})$ for all sufficiently large $t$ and all $n\in \mathbb N$. This finishes the proof of the lemma.
\end{proof}

In what follows it is slightly easier to work with finite trajectories rather than infinite trajectories. We let $\mathcal P _t'$ be the set of trajectories $(p_1',\dots ,p_d')$ of length $\lfloor 2t \rfloor $ that can be extended to trajectories $(p_1,\dots ,p_d)\in \mathcal P _t$. We claim that by Lemma~\ref{lem:poisson clocks}, for all $(p_1,\dots ,p_d)\in \mathcal P_t'$ and all $n_1,\dots ,n_d\le 2t$ we have that 
\begin{equation}
    \mathbb P \left(\forall i,\  N_i(t)=n_i \ \big| \ \forall i, \ \tilde{X}_i=p_i \right)\le t^{-\alpha (d-1)/6-1/2},
\end{equation}
where in here $\tilde{X}_i=p_i$ is a shorthand for $\tilde{X}_i(n)=p_i(n)$ for all $n\le 2t$. Indeed, the event $\big\{ \forall i , \ N_i(t)=n_i \big\}$ is independent of the trajectories $\tilde{X}_i$ after $2t$ steps and therefore, when estimating the conditional probability, these trajectories an be extended arbitrarily. 

The stage is now ready for the proof of Proposition~\ref{prop:fragmentation infinite-graph d-th moment}.
\begin{proof}[Proof of Proposition~\ref{prop:fragmentation infinite-graph d-th moment}]
Let $I_t:=\big[ t-t^{1/2+\alpha /18  } , t+t^{1/2+\alpha /18} \big] $ and note that 
\begin{equation*}
  \mathbb P (\exists i, \ N_i(t)\notin I_t)\le d\exp (-ct^{\alpha /9}) \le t^{-d}.  
\end{equation*}
Thus, by \eqref{eq:P_t} and Lemma~\ref{lem:poisson clocks} we have that 
 \begin{equation*}
     \begin{split}
     \mathbb P \big( X_1(t)=\cdots =X_d(t) \big) &\le \mathbb P \big( \exists i , \ N_i(t)\notin I_t \big) +\mathbb P \big(  (\tilde{X}_1,\dots ,\tilde{X}_d)\notin \mathcal P _{t}'  \big) \\
    &\!\!\!\!\!\!\!\!\!\!\!\!\!\!\!\!\!\!\!\!\!\!\!\!\!\!\!\!\!\!+\sum _{n_1,\dots ,n_d\in I_t} \sum _{ \substack{(p_1,\dots ,p_d)\in \mathcal P_{t}'  \\
    p_1(n_1)=\cdots =p_d(n_d) } } \mathbb P \big( \forall i , \ \tilde{X}_i=p_i \big) \cdot  \mathbb P \left(\forall i,\  N_i(t)=n_i \ \big| \ \forall i, \ \tilde{X}_i=p_i \right) \\
    &\leq 2t^{-d}  +t^{-\alpha (d-1)/6-1/2} \sum _{n_1,\dots ,n_d\in I_t} \sum _{ \substack{(p_1,\dots ,p_d)\in \mathcal P_{t}'  \\
    p_1(n_1)=\cdots =p_d(n_d) } } \mathbb P \big( \forall i , \ \tilde{X}_i=p_i \big) 
    \\
    &\le 2t^{-d} +  t^{-\alpha (d-1)/6-1/2}
    \sum _{n_1,\dots ,n_d\in I_t} \mathbb P \big( \tilde{X}_1(n_1)=\cdots =\tilde{X}_d(n_d) \big)
    \\
    &\le 2t^{-d}  +  t^{-\alpha (d-1)/6-1/2} |I_t|^d \big( 2C_0 t^{-1/2}\big) ^{(d-1)}\le t^{-\alpha d/10} ,
     \end{split}
 \end{equation*}
 where in the fourth inequality we used Assumption~\ref{assumption 1/sqrt t} and the fact that $\tilde{X}_i$ are independent. This finishes the proof of the proposition using Claim~\ref{claim:1}.
\end{proof}

\iffalse

\subsection{Proof of Proposition~\ref{prop:fragmentation finite-graph second moment}}
Let $G=(V,E)$ be a finite network. Let $\pi$ be the stationary distribution of a random walk on $G$, and $\pi_{\mathrm{max}}=\max_{v\in V}\pi_v$. Let $o\in V$, and $m_t(v)$ the fragmentation process originating at $o$. 

Let $N(t)$ be a $Poisson(t)$ random variable, and $W$ a discrete time random walk on $G$ originating at $o$. Where, $N$ and $W$ are independent. By Claim~\ref{claim:1} and Inequalities \eqref{eq:B^c} and \eqref{eq:11}, we have 
\begin{equation}\label{eq:m_o(t)^2}
\mathbb E\left[\sum_{v\in V}(m_t(v))^2\right]\leq K\frac{\pi_{\mathrm{max}}}{\pi_{o}}\left(\mathbb P(W(2N(t))=o)+\mathbb P(W(2N(t)+1)=o)\right),
\end{equation}
for some universal constant $K$. 
 
Since the Markov chain $W$ is a random walk on a connected finite graph, it is irreducible and either aperiodic or two-periodic. In either cases, \[\lim_{n\to\infty}\frac 1 2\left[\mathbb P(W(2n)=o)+\mathbb P(W(2n+1)=o)\right]=\pi_o.\] Furthermore, for any $n\in \mathbb N$, $\lim_{t\to\infty}\mathbb P(N(t)\leq n)=0$. Therefore,
\[
\lim_{t\to\infty}\left[P(W(2N(t))=o)+\mathbb P(W(2N(t)+1)=o)\right]= 2\pi_o,
\]
which together with \eqref{eq:m_o(t)^2} concludes the proof.

\fi

\section{IID Initial Opinions}

In this section we work under the assumption that the initial opinions are bounded i.i.d.\ random variables. 

\begin{assumption}[i.i.d.\ initial opinions]\label{assumtion iid}
The initial opinions are i.i.d.\ random variables with expectation $\mu $ and distribution supported on $[0,1]$.
\end{assumption}

We note that the interval $[0,1]$ in Assumption~\ref{assumtion iid} can be replaced with any other bounded interval. Indeed, this follows by re-scaling and shifting all the opinions. Furthermore, even a compact support is not crucial. Most of our arguments work for distributions with finite exponential moment such as normal, exponential, etc.  For simplicity, we did not work in the most general settings.

Under Assumption~\ref{assumtion iid}, we derive the next two theorems regarding convergence to consensus on finite and infinite networks respectively. 

\begin{thm}\label{thm:polylog}
Suppose that Assumptions~\ref{assumption 1/sqrt t} and~\ref{assumtion iid}  %, and \ref{assumption unit rates} 
 hold and $|V|=n$ is sufficiently large (depending on $C_0$). Then, there exists a universal constants $C,c>0$ such that for any $\eps \ge n^{-c}$ we have
\begin{equation}
  \mathbb P \big( \tau_\eps \ge  (\eps ^{-1} \log n)^C \big) \le e^{-\log ^2 n}.
\end{equation}
\end{thm}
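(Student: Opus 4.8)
The plan is to combine the fragmentation-process estimates of Section~\ref{sec: fragmentation} with the i.i.d.\ structure of $f_0$, via the representation $f_t(o)=\sum_{v} m_t^t(o,v)f_0(v)$ from \eqref{eq:degroot fragmentation}. Fix a target time $T:=(\eps^{-1}\log n)^C$ with $C$ a large universal constant to be chosen, and fix an intermediate time $t_1$ which is a large power of $\log n$ (say $t_1=(\log n)^{C'}$ with $C'\ll C$). First I would show that at time $t_1$, for \emph{every} vertex $o$, the opinion $f_{t_1}(o)$ is close to $\mu$. Conditionally on the clock rings, $f_{t_1}(o)=\sum_v m_{t_1}^{t_1}(o,v)f_0(v)$ is a weighted average of the i.i.d.\ variables $f_0(v)$ with weights $m_{t_1}^{t_1}(o,v)\in[0,1]$ summing to $1$; since the $f_0(v)$ are bounded, Hoeffding's inequality gives
\[
\PP\big(|f_{t_1}(o)-\mu|\ge \delta \mid \mathcal F_{t_1}\big)\le 2\exp\Big(-\tfrac{2\delta^2}{\sum_v m_{t_1}^{t_1}(o,v)^2}\Big).
\]
By Proposition~\ref{prop:fragmentation infinite-graph second moment} (applied with $V$ finite, legitimate since $t_1\ll n^2$) together with Markov's inequality, with probability at least $1-t_1^{-\alpha/2}$ we have $\sum_v m_{t_1}^{t_1}(o,v)^2\le t_1^{-\alpha/2}$; better yet, Theorem~\ref{thm:1}'s input, Proposition~\ref{prop:fragmentation infinite-graph d-th moment}, shows the tails are stretched-exponentially small. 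Taking a union bound over the $n$ vertices $o$ and choosing $\delta=\eps/4$, with probability $\ge 1-n\exp(-c t_1^{\beta})\ge 1-e^{-\log^2 n}$ (here is where $t_1$ being a sufficiently large power of $\log n$ is used), we get $|f_{t_1}(o)-\mu|\le\eps/4$ for all $o$ simultaneously, hence $\mathrm{osc}(f_{t_1})\le\eps/2$, hence $\tau_{\eps/2}\le t_1\le T$.

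There is a gap to be closed: the above controls $\sum_v m_{t_1}^{t_1}(o,v)^2$ in expectation, but the Hoeffding bound needs it to be small \emph{on a good event}, and then needs the good event to hold for all $o$ at once. The clean route is to invoke Theorem~\ref{thm:1} (or directly Proposition~\ref{prop:fragmentation infinite-graph d-th moment} with $d=d(t_1)\asymp t_1^{\alpha/10}$): outside probability $\exp(-t_1^{\alpha/10})$, we have $\max_v m_{t_1}^{t_1}(o,v)\le\eps^2$, say, and moreover $\sum_v (m_{t_1}^{t_1}(o,v))^2\le \max_v m_{t_1}^{t_1}(o,v)\cdot\sum_v m_{t_1}^{t_1}(o,v)\le\eps^2$; this uses only $\sum_v m=1$. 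Feeding $\sum_v m^2\le\eps^2$ into Hoeffding gives $\PP(|f_{t_1}(o)-\mu|\ge\eps/4\mid\mathcal F_{t_1})\le 2\exp(-c/\eps^0)$ — wait, more carefully, $\sum m^2 \le \eps^2$ is too crude for an $\eps/4$ deviation; instead one wants $\sum_v m^2\le t_1^{-\alpha/5}$, which \emph{does} follow from Proposition~\ref{prop:fragmentation infinite-graph d-th moment} with $d=2$ reinterpreted, i.e.\ directly from Proposition~\ref{prop:fragmentation infinite-graph second moment} combined with the stretched-exponential concentration of $\sum_v m^d$ for larger $d$. Concretely: $\EE[\sum_v m^d]\le t_1^{-\alpha d/10}$ for $d\asymp\log n$ forces, by Markov, $\PP(\sum_v m^d\ge t_1^{-\alpha d/20})\le t_1^{-\alpha d/20}=e^{-c\log^2 n\cdot(\log\log n)^{\ldots}}$, and $\sum_v m^2\le(\sum_v m^d)^{2/d}\cdot(\sum_v m)^{1-2/d}\le t_1^{-\alpha/10+o(1)}$ on that event. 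So choose $t_1$ a power of $\log n$ large enough that $t_1^{-\alpha/10}\le(\eps/8)^2/\log n$ — possible since $\eps\ge n^{-c}$ makes $\eps^{-2}\log n$ polylogarithmic in $n$ — and large enough that $n\exp(-t_1^{\alpha/20})\le e^{-\log^2 n}$. Union bounding over $o\in V$ then yields $\mathrm{osc}(f_{t_1})\le\eps/2$ off an event of probability $\le e^{-\log^2 n}$, as required, with $T=t_1\le(\eps^{-1}\log n)^C$ for suitable universal $C$.

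The main obstacle is the bookkeeping of exponents in the previous paragraph: one must choose $t_1$ as a power of $\log n$ (depending only on universal constants and $\alpha,\beta$ from Section~\ref{sec: fragmentation}, hence ultimately on $C_0$ only through those) so that three requirements hold simultaneously — (a) $t_1\le(\eps^{-1}\log n)^C$, which needs $t_1$ to be at most a fixed power of $\log n$ times a fixed power of $\eps^{-1}$, forcing us to use $\eps\ge n^{-c}$ only to keep $T$ of the claimed shape, not in the probabilistic estimate; (b) $\sum_v m_{t_1}^{t_1}(o,v)^2$ small enough (roughly $\eps^2/\log n$) for Hoeffding to beat the union bound over $n$ vertices and produce deviation $\eps/4$; (c) the failure probability from Proposition~\ref{prop:fragmentation infinite-graph d-th moment} (with $d\asymp\log n$) times $n$ is $\le e^{-\log^2 n}$. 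All three are satisfied by taking $t_1=(\eps^{-1}\log n)^{C_1}$ with $C_1$ a sufficiently large universal constant, then $C$ any constant $>C_1$; the assumption $n$ large depending on $C_0$ is what lets us invoke $t_1\le|V|^2/3$ in Propositions~\ref{prop:fragmentation infinite-graph second moment}--\ref{prop:fragmentation infinite-graph d-th moment}. Finally, since $\tau_\eps\le\tau_{\eps/2}$ we conclude $\PP(\tau_\eps\ge T)\le\PP(\mathrm{osc}(f_{t_1})>\eps/2)\le e^{-\log^2 n}$, completing the proof.
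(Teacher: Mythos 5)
Your proposal is correct and follows essentially the same route as the paper: represent $f_t(o)$ via the fragmentation process, use Propositions~\ref{prop:fragmentation infinite-graph second moment} and~\ref{prop:fragmentation infinite-graph d-th moment} (with $d$ a power of $t$) together with Markov's inequality to show that off a stretched-exponentially small event $\max_v m_t(o,v)$ and hence $\sum_v m_t(o,v)^2$ are small, apply conditional Hoeffding/Azuma to get per-vertex concentration $\PP(|f_t(o)-\mu|\ge\eps)\le 2\exp(-\eps^2 t^\beta)$, union bound over $o$, and pick $t_1=(\eps^{-1}\log n)^{O(1)}$. The paper packages the per-vertex concentration step as a standalone Lemma~\ref{lem: concentration around mu} (derived exactly this way, with Azuma in place of your Hoeffding and the simpler bound $\sum_v m^2\le\max_v m$ in place of your H\"older interpolation), and Theorem~\ref{thm:polylog} is then a two-line union-bound corollary; the only cosmetic difference is that you inline that lemma and spend more effort double-checking the exponent bookkeeping.
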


\begin{thm}\label{thm: convergence infinite network}
If $V$ is infinite and Assumptions~\ref{assumption 1/sqrt t} and~\ref{assumtion iid} hold,
then $\lim_{t\to\infty}f_t(v)=\mu$ almost surely, for every $v\in V$.
\end{thm}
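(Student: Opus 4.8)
The plan is to fix a vertex $o$ and show $f_t(o)\to\mu$ almost surely; a countable intersection over $o\in V$ then yields the theorem. By Definition~\ref{def: degroot} and \eqref{eq:degroot fragmentation}, $f_t(o)=\sum_v m_t^t(o,v)f_0(v)$, and since a walk with rate-$1$ clocks on vertices is non-explosive, $\sum_v m_t^t(o,v)=1$, so $f_t(o)-\mu=\sum_v m_t^t(o,v)\big(f_0(v)-\mu\big)$. Conditionally on the clock rings $\mathcal{F}$, the weights $w_v:=m_t^t(o,v)$ are fixed while the $f_0(v)-\mu$ stay i.i.d., mean zero, supported in an interval of length $1$; hence, writing $W_t:=\sum_v w_v^2$, a Marcinkiewicz--Zygmund/Hoeffding bound gives $\mathbb{E}\big[(f_t(o)-\mu)^{2d}\mid\mathcal{F}\big]\le (CdW_t)^d$ for every integer $d\ge1$. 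Because $\sum_v w_v=1$ we have $W_t\le\max_v w_v$, so $W_t^d\le\sum_v w_v^d$; and since $m_t^t(o,\cdot)$ has the same law as the forward fragmentation $m_t(o,\cdot)$, Proposition~\ref{prop:fragmentation infinite-graph d-th moment} (its hypothesis holds here by Proposition~\ref{prop:fragmentation infinite-graph second moment}, and it applies for all $t$ since $|V|=\infty$) gives $\mathbb{E}[W_t^d]\le t^{-\alpha d/10}$ once $t\ge\max(t_0,d^{10/\alpha})$. Fixing a universal $d$ with $\alpha d/10\ge 4$ and applying Markov's inequality, for every $s>0$ there is $t_1=t_1(C_0,s)$ with
\[
\mathbb{P}\big(|f_t(o)-\mu|>s\big)\le t^{-3}\qquad\text{for all }t\ge t_1\text{ and every }o\in V .
\]
In particular $\sum_{n\in\mathbb{N}}\mathbb{P}(|f_n(o)-\mu|>s)<\infty$, so Borel--Cantelli already gives $f_n(o)\to\mu$ a.s.\ along integer times.

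The remaining, and main, task is to control $f_t(o)$ between consecutive integers, and here the finite speed of information propagation in the DeGroot dynamics is crucial. By the restart identity \eqref{eq:m_s^t}, for every $t'\in[n,n+1]$ we have $f_{t'}(o)=\sum_w m_{t'-n}^{t'}(o,w)\,f_n(w)$, a convex combination whose weights are the law of a rate-$1$ continuous-time $P$-walk started at $o$ and run for time $t'-n\le1$ (a ``backward walk from $(o,t')$''). Let $\Delta:=\sup_v\big|\{w:P_{vw}>0\}\big|$, finite in every setting covered by Assumption~\ref{assumption 1/sqrt t} (cf.\ Remark~\ref{remark:assumption}); in Theorem~\ref{cor: infinite graph} it is the maximal degree --- this is where the argument uses more than Assumption~\ref{assumption 1/sqrt t}. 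For such a walk to reach distance $L$ from $o$ during $[n,n+1]$ it must make at least $L$ jumps, which requires a path $o=v_0,v_1,\dots,v_L$ in the support graph of $P$ whose vertices $v_0,\dots,v_{L-1}$ have clock rings at decreasing times in the unit interval $(n,n+1]$; the expected number of such configurations is at most $\Delta^{L}/L!$. Taking $L=L_n:=\lceil C_1\log n/\log\log n\rceil$ with $C_1$ a large absolute constant makes this at most $n^{-3}$ for all large $n$ (depending on $\Delta$). On the complementary event, $m_{t'-n}^{t'}(o,\cdot)$ is supported in $B(o,L_n)$ for all $t'\in[n,n+1]$, whence $|f_{t'}(o)-\mu|\le\max_{w\in B(o,L_n)}|f_n(w)-\mu|$. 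Since $|B(o,L_n)|\le\Delta^{L_n+1}=n^{o(1)}$, combining with the integer-time bound above yields
\[
\mathbb{P}\Big(\sup_{t\in[n,n+1]}|f_t(o)-\mu|>s\Big)\le n^{-3}+|B(o,L_n)|\cdot n^{-3}=n^{-3+o(1)},
\]
which is summable. Borel--Cantelli gives that a.s.\ $\sup_{t\in[n,n+1]}|f_t(o)-\mu|\le s$ for all large $n$, hence $\limsup_{t\to\infty}|f_t(o)-\mu|\le s$; intersecting over $s=1/j$, $j\in\mathbb{N}$, completes the proof.

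I expect this second step --- filling the continuum of times --- to be the real obstacle. On infinite networks the tools of Section~\ref{sec:finite} are unavailable: there is no stationary measure to weight the $L^2$ norm, the empirical mean and empirical variance are not even finite, $f_t(o)$ is not a martingale, and $\operatorname{osc}(f_t)$ need not converge at all (on $\mathbb{Z}$ with non-degenerate i.i.d.\ opinions, $\sup_v f_t(v)\equiv\sup_v f_0(v)$ for every $t$, since long runs of extreme initial opinions take polynomially many updates to dissipate). Moreover $f_t(o)$ jumps by an amount of order $1$ each time $o$'s clock rings, so no $L^p$-in-$t$ estimate can by itself control $\sup_{t\in[n,n+1]}$. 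The device that rescues the argument is the observation above: over a bounded time window $f_t(o)$ is a convex combination of the values $f_n(w)$ over a ball of radius only $\mathrm{polylog}(n)$ around $o$ --- a deterministic ``light cone'' for the asynchronous DeGroot dynamics --- each of which is already controlled by the moment estimate of the first step.
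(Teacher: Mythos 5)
Your Step 1 is sound and runs along the same track as the paper's Lemma~\ref{lem: concentration around mu}: both feed the moment bounds of Propositions~\ref{prop:fragmentation infinite-graph second moment} and~\ref{prop:fragmentation infinite-graph d-th moment} into a concentration estimate for the conditionally sub-Gaussian sum $\sum_v m_t(v)(f_0(v)-\mu)$, and either a polynomial or stretched-exponential tail suffices for Borel--Cantelli along integers. The divergence is in Step 2. You control $\sup_{t\in[n,n+1]}|f_t(o)-\mu|$ by a light-cone argument, which is correct as far as it goes but requires $\Delta:=\sup_v|\{w:P_{vw}>0\}|<\infty$, a hypothesis that Assumption~\ref{assumption 1/sqrt t} does \emph{not} imply (e.g., a translation-invariant chain on $\mathbb Z$ with $P_{v,v+i}\propto 2^{-|i|}$ satisfies the square-root decay yet has infinite out-degree). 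You flag this yourself, but the upshot is that your argument proves Theorem~\ref{cor: infinite graph} rather than the stated Theorem~\ref{thm: convergence infinite network}, which assumes only~\ref{assumption 1/sqrt t} and~\ref{assumtion iid}.

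The paper dispatches the continuum of times with a much lighter device that needs no degree bound and no light cone. The key observation is that $t\mapsto f_t(v)$ is \emph{piecewise constant}: it changes only when $v$'s own clock rings. Hence if $|f_t(v)-\mu|>\eps$ at some $t$, the same inequality persists on $[t,T)$ where $T$ is $v$'s next ring, an interval of i.i.d.\ exponential length; each such holding interval contains an integer with conditional probability bounded below (at least $e^{-1}$). By the conditional (second) Borel--Cantelli lemma, on the event that $|f_t(v)-\mu|>\eps$ at an unbounded set of times, almost surely $|f_n(v)-\mu|>\eps$ for infinitely many integers $n$ --- but Lemma~\ref{lem: concentration around mu} together with ordinary Borel--Cantelli shows the latter has probability zero. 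This closes the gap between integer and real times in two lines and works in full generality. Your paragraph of commentary correctly identifies that there is no useful stationary measure, no martingale structure, and no oscillation control on an infinite graph, and that $f_t(o)$ jumps by $O(1)$ at each ring of $o$; but rather than being the obstacle you anticipate, that very piecewise-constancy is what the paper exploits. If you want to salvage the general case within your framework, replace the light-cone step with this observation.
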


\begin{remark}\label{remark: polylog expectation}
Theorem~\ref{cor: infinite graph} follows from Theorem~\ref{thm: convergence infinite network} using the fact that a simple random walk on an infinite connected graph of bounded degree satisfies Assumption~\ref{assumption 1/sqrt t} (see Remark~\ref{remark:assumption}). 

Next, we explain how Theorem~\ref{cor:polylog} follows from Theorem~\ref{thm:polylog}. Let $C,c$ be the constants from Theorem~\ref{thm:polylog} and let $t_1:=(\eps ^{-1}\log n)^C$. Suppose first that $\eps \ge n^{-c}$ and note that by Theorem~\ref{thm:polylog} we have $\mathbb P (\tau _{\epsilon }\ge t_1)\le e^{-\log ^2 n}$. Next, observe that at time $t_1$ all the opinions are in $[0,1]$ and therefore, on the event $\{\tau _\eps \ge t_1\}$, we can use either Theorem~\ref{cor:yuval} in the undirected case or Corollary~\ref{cor:euler} in the Eulerian directed case to bound the expected additional time required after $t_1$ to reach an $\eps $ consensus by $n^4\log(1/\eps)$, for all sufficiently large $n$. We get
\begin{equation}
\begin{split}
    \mathbb E [\tau _\eps ]&=\mathbb E [ \mathbb E  [\tau _\eps \ |  \ \mathcal G _{t_1}] ] \le t_1 \mathbb P (\tau _{\eps }\le t_1)  +\mathbb E \big[ \mathds 1 \{\tau _{\eps }> t_1 \} \cdot  \mathbb E [\tau _{\eps } \ | \ \mathcal G _{t_1}]  \big] \\
    &\le  t_1\mathbb P (\tau _\eps \le t_1) +\big( t_1+n^4\log (1/\eps ) \big) \mathbb P (  \tau _{\eps }> t_1) \le (\eps ^{-1}\log n)^{\tilde{C}}, 
\end{split}
\end{equation}
for some universal constant $\tilde{C}$ where $\mathcal G _t$ is the sigma algebra generated by the process up to time $t$. Next, suppose that $\eps \le n^{-c}$. In this case Theorem~\ref{cor:polylog} follows immediately from Theorem~\ref{cor:yuval} and Corollary~\ref{cor:euler}. 
\end{remark}

Both Theorem~\ref{thm:polylog} and Theorem~\ref{thm: convergence infinite network} rely on the following crucial lemma. Lemma~\ref{lem: concentration around mu} below estimates the rate in which the distribution of the opinions concentrates around $\mu$.

\begin{lem}\label{lem: concentration around mu}
Under Assumptions~\ref{assumption 1/sqrt t} and \ref{assumtion iid}, there exists $t_0>0$ (depending on $C_0$) and  a universal constant $\beta >0$ such that for every $t\in (t_0,|V|^2/3)$, every $\eps >0$, and every $o\in V$
\[
    \mathbb P \big( |f_t(o)- \mu  |\ge \eps   \big) \le 2\exp \big( -\eps ^2t^{\beta } \big) .
\]
\end{lem}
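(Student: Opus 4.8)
The plan is to condition on the Poisson clocks, use the fragmentation representation \eqref{eq:degroot fragmentation}, and feed in the moment estimates of Section~\ref{sec: fragmentation}. Fix $o\in V$ and $t$, and write $f_t(o)=\sum_{v\in V}m_v f_0(v)$ with $m_v:=m_t^t(o,v)$ as in \eqref{eq:degroot fragmentation}. Since for fixed $t$ the time-reversed Poisson processes $N^t_v(\cdot)$ have the same joint law as the original clocks, the random vector $(m_v)_{v\in V}$ has the same distribution as the fragmentation process $(m_t(o,v))_{v\in V}$; in particular it is a probability vector ($m_v\ge0$, $\sum_v m_v=1$) to which Propositions~\ref{prop:fragmentation infinite-graph second moment} and~\ref{prop:fragmentation infinite-graph d-th moment} apply, and it is independent of the i.i.d.\ opinions $(f_0(v))_{v\in V}$. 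Conditionally on the clocks, $f_t(o)-\mu=\sum_v m_v\bigl(f_0(v)-\mu\bigr)$ is a weighted average of independent mean-zero random variables, each supported in an interval of length at most $1$, so Hoeffding's inequality gives
\[
\mathbb P\bigl(|f_t(o)-\mu|\ge\eps \mid (m_v)_v\bigr)\le 2\exp\!\left(-\frac{2\eps^2}{S}\right),\qquad S:=\sum_{v\in V}m_v^2 .
\]
It therefore suffices to control the upper tail of $S$.

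Since $\sum_v m_v=1$ we have $S\le\max_v m_v=:M$, so $\mathbb P(S\ge s)\le\mathbb P(M\ge s)$ for every $s>0$, and $M^d\le\sum_v m_v^d$ for every integer $d\ge2$. Proposition~\ref{prop:fragmentation infinite-graph d-th moment} applies with $\alpha$ the universal exponent of Proposition~\ref{prop:fragmentation infinite-graph second moment} and an absolute $C_1$ (for $t\le t_0$ one uses the trivial bound $\sum_v m_t(v)^2\le1$); hence for $\max(t_0,d^{10/\alpha})\le t\le|V|^2/3$,
\[
\mathbb P\bigl(M\ge t^{-\beta}\bigr)=\mathbb P\bigl(M^d\ge t^{-\beta d}\bigr)\le t^{\beta d}\,\mathbb E\Bigl[\sum_v m_v^d\Bigr]\le t^{-(\alpha/10-\beta)d}.
\]
Choosing $\beta:=\alpha/20$ and $d:=\lceil t^{\alpha/20}\rceil$ (so that $d^{10/\alpha}\le t$ once $t$ is above a universal constant), the right-hand side is at most $t^{-(\alpha/20)t^{\alpha/20}}\le\exp(-t^{\beta})$ for $t$ beyond a universal threshold. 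Thus $\mathbb P(S\ge t^{-\beta})\le\exp(-t^{\beta})$.

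Finally, splitting on the event $\{S<t^{-\beta}\}$, on which the conditional Hoeffding bound is at most $2\exp(-2\eps^2 t^{\beta})$, while its complement has probability at most $\exp(-t^{\beta})$, yields
\[
\mathbb P\bigl(|f_t(o)-\mu|\ge\eps\bigr)\le 2\exp\bigl(-2\eps^2 t^{\beta}\bigr)+\exp\bigl(-t^{\beta}\bigr).
\]
We may assume $\eps\le1$, as the left side is otherwise $0$. If $\eps^2 t^{\beta}\le\log2$ the asserted bound $2\exp(-\eps^2 t^{\beta})\ge1$ is trivial; if $\eps^2 t^{\beta}>\log2$ then $2\exp(-2\eps^2 t^{\beta})\le\exp(-\eps^2 t^{\beta})$ and $\exp(-t^{\beta})\le\exp(-\eps^2 t^{\beta})$, so the right side is at most $2\exp(-\eps^2 t^{\beta})$. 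This proves the lemma with $\beta=\alpha/20$, after enlarging $t_0$ to absorb the thresholds above; replacing the admissible range $t\le|V|^2/3$ (on which the propositions were invoked) by the stated $t<|V|^2$ costs only a cosmetic adjustment of constants.

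The step I expect to be the main obstacle is the upgrade of the in-expectation decay $\mathbb E\,S=\mathbb E\|m_t\|_2^2\le t^{-\alpha}$ from Proposition~\ref{prop:fragmentation infinite-graph second moment} to a stretched-exponential tail bound on $S$: without it the estimate carries an unwanted polynomial error term. This is exactly what Proposition~\ref{prop:fragmentation infinite-graph d-th moment} delivers, through $\|m_t\|_2^2\le\|m_t\|_\infty\le(\sum_v m_t(v)^d)^{1/d}$ and the freedom to let $d$ grow polynomially in $t$. Everything else — the conditional Hoeffding estimate and the elementary case analysis in $\eps$ — is routine.
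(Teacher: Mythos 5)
Your proof is correct and follows essentially the same route as the paper's: fragmentation representation, Markov's inequality on a high moment (Proposition~\ref{prop:fragmentation infinite-graph d-th moment} with $d$ growing polynomially in $t$) to get a stretched-exponential tail on $\sum_v m_t(v)^2$, then conditional Hoeffding/Azuma given the clocks. The only differences are cosmetic — you bound $M=\max_v m_v$ via $M^d\le\sum_v m_v^d$ while the paper applies a union bound over $v$, and you handle the $\eps$-dependence by a final case split rather than baking it into the threshold as the paper does — neither changes the argument.
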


\begin{proof}[Proof of Lemma~\ref{lem: concentration around mu}]
Without loss of generality, suppose that $\eps <1/2$. Let $o\in V$ and consider the fragmentation process $m_t(v)$ originating from $o$. Recall that 
\begin{equation}
    f_t(o)\overset{d}{=} \sum _{v\in V} m_t(v)f_0(v).
\end{equation}
Note that the fragmentation process is independent of the initial opinions.

Next, let $\alpha >0$ be the constant from Proposition~\ref{prop:fragmentation infinite-graph second moment}. Let $t>1$ sufficiently large, $d:=t^{\alpha /10}$ and $a:=e^{-1}t^{-\alpha /10}$.

By Markov's inequality we have 
\begin{equation}
    \mathbb P \big( \exists v, m_t(v)\ge a \big) \le \sum _{v\in V} \mathbb P \big( m_t(v)\ge a \big) \le a^{-d}\sum _{v\in V}  \mathbb E [m_t(v)^d] \le a^{-d}t^{-\alpha d/10}\le \exp (-t^{\alpha /10}),
\end{equation}
where in the third inequality we used Proposition~\ref{prop:fragmentation infinite-graph d-th moment} and Proposition~\ref{prop:fragmentation infinite-graph second moment}.

Let $\mathcal F $ be the sigma algebra generated by the fragmentation process. On the event $\mathcal A := \big\{ \forall  v, m_t(v)\le a \big\}\in \mathcal F $ we have that 
\begin{equation}
    \sum _{v\in V} m_t(v)^2 \le a\sum _{v\in V} m_t(v)=a
\end{equation}
and therefore, by Azuma's inequality, on this event we have
\begin{equation}
    \mathbb P \big( |f_t(o)- \mu  |\ge \eps  \ \big| \ \mathcal F  \big) \le \exp \bigg( \frac{-\eps ^2 }{2\sum _{v\in V} m_t(v)^2} \bigg)  \le  \exp \big( -\eps ^2 /(2a) \big)\le \exp (-\eps ^2t^{\alpha /10})  .
\end{equation}
Thus,
\begin{equation}
    \mathbb P \big( |f_t(o)- \mu  |\ge \eps   \big) \le \mathbb P (\mathcal A ^c)+\mathbb P \big( |f_t(o)- \mu  |\ge \eps , \ \mathcal A  \big)   \le 2\exp \big( -\eps ^2t^{\alpha /10} \big) 
\end{equation}
as needed
\end{proof}

\begin{proof}[Proof of Theorem~\ref{thm:polylog}]
Let $\beta$ be the universal constant from Lemma~\ref{lem: concentration around mu} and let $\eps \ge n^{-\beta /4}$. By Lemma~\ref{lem: concentration around mu}, and a union bound we obtain 
\begin{equation}
    \mathbb P (\tau _\eps >t) \le \mathbb P \big( \exists o \in V, \ |f_t(o)- \mu  |\ge \eps /2  \big) \le  2n \exp (-\eps ^2t^{\beta }/4),
\end{equation}
for any $t\in(t_0,n^2/3)$, where $t_0$ depends only on $C_0$. Letting  $t_1:=(\eps ^{-1}\log n)^{4/\beta } \le n^2/3$ we have $\mathbb P (\tau _\eps >t_1) \le e^{-\log ^2 n}$.
\end{proof}

\begin{proof}[Proof of Theorem~\ref{thm: convergence infinite network}]
Fix $\eps>0$ and $v\in V$. Suppose for the sake of contradiction that $\mathbb P(\limsup_{t\to\infty} |f_t(v)-\mu|>\eps)=c_1>0$. For $n\in \mathbb N$, let $t_n:=\inf\{t> n:|f_t(v)-\mu|>\eps\}$ and consider the event $A_n$ that $t_n<\infty$ and the clock at $v$ did not ring after time $t_n$ up to time $\lceil t_n\rceil$. On one hand, we have $\mathbb P(A_n)\geq c_1e^{-1}>0
$. On the other hand, 
\[\mathbb P(A_n)\leq \sum_{k=n}^\infty \mathbb P(|f_k(v)-\mu|>\eps)\]
which converges to zero as $n$ goes to infinity, by Lemma~\ref{lem: concentration around mu}. 
% In the event that there is an unbounded sequence of times $t_1<t_2<\cdots$ such that $|f_{t_n}(v)-\mu|>\eps$ for all $n\in\mathbb N$, by the conditional Borel-Cantelli Lemma, there is such a sequence of times where $t_n$ are integers. But since by Lemma~\ref{lem: concentration around mu}, $\sum_{n=1}^\infty \mathbb P(|f_n(v)-\mu|>\eps)<\infty$, it follows from the Borel-Cantelli Lemma that $|f_n(v)-\mu|>\eps$ only for finitely many $n$'s. Therefore, there exists a random variable $t_{\eps,v}$ such that $|f_t(v)-\mu|\leq \eps$, for all $t\geq t_{\eps,v}$. This is true for any $\eps>0$, therefore, $f_t(v)\xrightarrow[t\to\infty]{} \mu$ almost surely. 
\end{proof}

\section{Tightness of the Results}
\label{section:counterexamples}

In this section, we provide examples to demonstrate the tightness of our results. The matrix $P$ in all these examples is the transition matrix of a simple random walk on a graph.

\subsection{Lower bounds for the consensus time with arbitrary initial opinions}
\begin{claim}\label{claim:lower bound spectral}
Suppose that $P$ is the transition matrix of a simple random walk on a graph $G=(V,E)$. There are initial opinions $f_0(v)\in [0,1]$ such that $\mathbb E [\tau _{1/2}]\ge 1/(10\gamma )$, where $\gamma $ is the spectral gap of $P$.
\end{claim}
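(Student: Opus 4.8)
The plan is to track the component of the opinion profile along the second eigenfunction of $P$, as indicated in Remark~3 following Theorem~\ref{rate}. Let $\phi$ be a right eigenvector of $P$ with $P\phi=\lambda_2\phi$ and $\lambda_2=1-\gamma$. Since $P$ is irreducible and reversible, $\phi$ is orthogonal to the constant vector, $\langle\phi,\mathbf{1}\rangle=0$, so $\phi$ has both strictly positive and strictly negative coordinates. I take $f_0:=\mathbf{1}_{\{v:\,\phi(v)>0\}}\in\{0,1\}^V\subseteq[0,1]^V$; then $\mathrm{osc}(f_0)=1>1/2$, so $\tau_{1/2}$ is not trivially zero. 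Mimicking the drift computations in Lemma~\ref{lemdrift}: when the clock at $v$ rings, $\langle f_t,\phi\rangle$ changes by $\pi(v)\phi(v)\big((Pf_{t-})(v)-f_{t-}(v)\big)$, so $D\big(\langle f_t,\phi\rangle\big)=\langle\phi,(P-I)f_t\rangle$; using $P^*=P$ and $P\phi=\lambda_2\phi$ this equals $\langle(P-I)\phi,f_t\rangle=-\gamma\langle\phi,f_t\rangle$. By the product rule~\eqref{prod}, $W_t:=e^{\gamma t}\langle f_t,\phi\rangle$ has zero drift, hence is a martingale ($V$ is finite, so there is no integrability issue). Because $\langle\mathbf{1},\phi\rangle=0$, its initial value is $W_0=\langle f_0,\phi\rangle=\sum_{\phi(v)>0}\pi(v)\phi(v)=\tfrac12 L$, where $L:=\sum_{v\in V}\pi(v)|\phi(v)|>0$.

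Next I bound $\langle f_T,\phi\rangle$ pointwise at a fixed time $T>0$. Since $f_0$ is $[0,1]$-valued, $f_t$ is $[0,1]$-valued for all $t$, so $|\langle f_T,\phi\rangle|=\big|\langle f_T-\tfrac12\mathbf{1},\phi\rangle\big|\le\|f_T-\tfrac12\mathbf{1}\|_\infty\,L\le\tfrac12 L$ always. Moreover $\mathrm{osc}(f_t)$ is non-increasing in $t$ (the running maximum of the opinions never increases and the running minimum never decreases), so on $\{\tau_{1/2}\le T\}$ there is $s\le T$ with $\mathrm{osc}(f_s)\le 1/2$, whence $\mathrm{osc}(f_T)\le 1/2$; subtracting the midpoint of the length-$1/2$ interval containing the coordinates of $f_T$ gives $|\langle f_T,\phi\rangle|\le\tfrac14 L$ on that event. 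Taking expectations in $\EE W_T=W_0$ and splitting over $\{\tau_{1/2}\le T\}$ and its complement,
\[
\tfrac12 L\,e^{-\gamma T}=\EE\langle f_T,\phi\rangle\le\tfrac14 L\,\PP(\tau_{1/2}\le T)+\tfrac12 L\,\PP(\tau_{1/2}>T).
\]
Dividing by $L>0$ yields $\PP(\tau_{1/2}>T)\ge 2e^{-\gamma T}-1$. Taking $T=1/(3\gamma)$ (legitimate since $\gamma>0$ by irreducibility) and using $e^{-1/3}>7/10$, we get $\PP(\tau_{1/2}>T)\ge 2e^{-1/3}-1>2/5$, hence $\EE(\tau_{1/2})\ge T\,\PP(\tau_{1/2}>T)>\tfrac{2}{15\gamma}>\tfrac{1}{10\gamma}$, as claimed.

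The point that needs care is obtaining a bound insensitive to the shape of $\phi$: a sharply peaked second eigenfunction can make $\vpi(\phi)$ tiny compared with $\mathrm{osc}(\phi)^2$, so any estimate phrased through the $\pi$-weighted $L^2$ norm of $\phi$ would degrade and the constant would be lost. Taking the $\{0,1\}$-valued profile $\mathbf{1}_{\{\phi>0\}}$ and estimating $\langle\,\cdot\,,\phi\rangle$ via the $L^\infty$–$L^1(\pi)$ pairing is precisely what makes the normalization $L=\sum_v\pi(v)|\phi(v)|$ appear on both sides and cancel. The two remaining ingredients — monotonicity of $\mathrm{osc}(f_t)$ and the generator computation for $\langle f_t,\phi\rangle$ — are routine and of the same type already carried out in Section~\ref{sec:finite}.
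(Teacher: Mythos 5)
Your proof is correct, and it takes a genuinely different route from the paper's. The paper's argument sets the initial profile to be the second right-eigenvector itself, scaled and shifted into $[0,1]$, and then computes $\mathbb{E}[f_t(v)]$ directly from the fragmentation identity $\mathbb{E}[f_t] = e^{t(P-I)}f_0$; the key observation is that the normalizing constants cancel when looking at $\mathbb{E}[f_t(v_2)-f_t(v_1)]$ for the extremal vertices $v_1,v_2$, giving $\mathbb{E}[f_t(v_2)-f_t(v_1)]=e^{-\gamma t}$, after which the same Markov-type inequality $\mathbb{P}(X>1/2)\ge 2\mathbb{E}[X]-1$ (valid for $X\le 1$) yields the bound. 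You instead take the \emph{extreme} $\{0,1\}$-valued profile $f_0=\mathbf{1}_{\{\phi>0\}}$ and track the $\pi$-projection $\langle f_t,\phi\rangle$ onto the eigenvector, showing $e^{\gamma t}\langle f_t,\phi\rangle$ is a martingale and bounding $|\langle f_T,\phi\rangle|$ deterministically via the $L^\infty$–$L^1(\pi)$ pairing ($\le L/4$ on the consensus event, $\le L/2$ always, with $L=\sum_v\pi(v)|\phi(v)|$ cancelling). The two proofs land on the identical numeric relation $\mathbb{P}(\tau_{1/2}>T)\ge 2e^{-\gamma T}-1$, though at different $T$. Your approach is more in the spirit of the (super)martingale arguments earlier in Section~\ref{sec:finite} and is arguably more robust (it does not require computing $\mathbb{E}[f_t]$ in closed form at each vertex), while the paper's is slightly shorter; note, though, that your concluding remark about $\mathrm{var}_\pi(\phi)$ degrading the constant is aimed at an alternative $L^2$ route the paper doesn't actually take — the paper's direct expectation computation is also insensitive to the normalization of $\phi$, since only oscillation-one differences survive.
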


\begin{proof}
Let $g\colon V\to \mathbb R$ be the right eigenvector corresponding to the second largest eigenvalue of $P$. We define the vector of initial opinions as follows 
\begin{equation}
    f_0(v):=\frac{g(v)-\min _u g(u)}{\max _u g(u)-\min _u g(u)}.
\end{equation}
Note that $\min _v f_0(v)=0$ and $\max _v f_0(v)=1$. Moreover, note that $f_0$ is a right eigenvector of the matrix $e^{t(P-I)}$ with eigenvalue $e^{t(\lambda _2-1)}=e^{-\gamma t}$ for all $t>0$.
Thus, by \eqref{eq:degroot fragmentation} and \eqref{def:frag} we have that 
\begin{equation}
\begin{split}
    \mathbb E [f_t(v)]&=\sum _u \mathbb E [m_t(v,u)]f_0(u)=\sum _u  \mathbb P _v (X(t)=u)f_0(u)\\
    &=\sum _u   \big(e^{t(P-I)} \big) _{v,u} f_0(u)=\big( e^{t(P-I)}f_0 \big) _v =e^{-\gamma t}f_0(v).
\end{split}
\end{equation}
Thus, letting $v_1,v_2\in V$ such that $f(v_2)-f(v_1)=1$ and $t_1:=1/(2\gamma )$ we obtain $\mathbb E [f_{t_1}(v_2)-f_{t_1}(v_1)]\ge e^{-1/2}$. Since $f_{t_1}(v_2)-f_{t_1}(v_1)\le 1$, it follows that 
\begin{equation}
    \mathbb P \big( \tau _{1/2} \ge t_1 \big)  \ge \mathbb P  \big( f_{t_1}(v_2)-f_{t_1}(v_1) > 1/2 \big) \ge 2e^{-1/2}-1.
\end{equation}
This finishes the proof of the claim.
\end{proof}

\begin{claim}\label{claim:exp}
For all $n$ sufficiently large, there exists a network of size $n$ and initial opinions in $[0,1]$ for which $\mathbb E [\tau _{1/2} ]\ge e^{cn}$.
\end{claim}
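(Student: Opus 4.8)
The plan is to take $P$ to be a strongly biased nearest--neighbour walk on a path with one absorbing end. Let $V=\{0,1,\dots ,n-1\}$; give vertex $0$ a single self-loop (so it is absorbing), give each interior vertex $i$ three parallel edges to $i+1$ and one edge to $i-1$, and give $n-1$ a single edge to $n-2$. Then simple random walk on this digraph --- which has uniformly bounded degree --- satisfies $P_{i,i+1}=\tfrac34=:p$ and $P_{i,i-1}=\tfrac14=:q$ for $1\le i\le n-2$; set $\rho:=q/p=\tfrac13$. Take initial opinions $f_0(0)=0$ and $f_0(v)=1$ for $v\neq 0$; then $f_t(0)=0$ for every $t$ (and, for concreteness, consensus still holds: every fragmentation walk is a.s.\ absorbed at $0$, so $f_\infty=0$ and $\tau_{1/2}<\infty$ a.s.). In particular $\text{osc}(f_t)\ge f_t(o')$ for every $o'$, and since $\text{osc}(f_t)$ is non-increasing (the range of $f_t$ can only shrink under averaging updates), it suffices to find one vertex $o'$ and a horizon $t^*=e^{cn}$ with $\PP\big(\text{osc}(f_t)>\tfrac12\big)\ge\tfrac12$ for all $t\le t^*$, since then $\EE[\tau_{1/2}]\ge\int_0^{t^*}\PP(\tau_{1/2}>t)\,dt\ge t^*/2$.

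The first step is to reduce to a random-walk estimate. By the fragmentation identity \eqref{eq:degroot fragmentation}, $f_t(o')$ is a clock-determined convex combination of the $f_0(v)$; averaging over the clocks, just as in the computation of $\EE[f_t(v)]$ in the proof of Claim~\ref{claim:lower bound spectral}, gives $\EE[f_t(o')]=\EE_{o'}[f_0(X_t)]=\PP_{o'}(X_t\neq 0)=\PP_{o'}(T_0>t)$, where $X$ is the continuous-time chain with generator $P-I$ and $T_0$ is its absorption time at $0$. Thus it suffices to show $\PP_{o'}(T_0\le t)\le\tfrac14$ for all $t\le e^{cn}$ with $o'=\lfloor n/2\rfloor$: then $\EE[f_t(o')]\ge\tfrac34$, whence $\PP(f_t(o')>\tfrac12)\ge\tfrac12$ because $f_t(o')\in[0,1]$, and combining with the first paragraph completes the proof.

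To control the absorption probability, write $\tfrac{d}{dt}\PP_{o'}(X_t=0)=q\,\PP_{o'}(X_t=1)$, so $\PP_{o'}(T_0\le t)\le q\,t\cdot\sup_{s\le t}\PP_{o'}(X_s=1)$; the heart of the matter is a bound on $\PP_{o'}(X_s=1)$ that is uniform in $s$. For this I would couple $X$ with the birth--death chain $\tilde X$ that reflects at $0$ instead of absorbing (same Poisson clocks, same up/down choices): the two coincide until the first visit to $0$, and $X_s=1$ forces $s$ to be before that visit, so $\{X_s=1\}\subseteq\{\tilde X_s=1\}$ and hence $\PP_{o'}(X_s=1)\le\PP_{o'}(\tilde X_s=1)$. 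Now $\tilde X$ is a reversible birth--death chain with stationary measure $\tilde\pi(i)\propto(p/q)^i$, so by reversibility $\PP_{o'}(\tilde X_s=1)=\frac{\tilde\pi(1)}{\tilde\pi(o')}\,\PP_1(\tilde X_s=o')\le\frac{\tilde\pi(1)}{\tilde\pi(o')}=\rho^{\,o'-1}$, uniformly in $s$. With $o'=\lfloor n/2\rfloor$ this yields $\PP_{o'}(T_0\le t)\le q\,t\,\rho^{\lfloor n/2\rfloor-1}$, which is at most $\tfrac14$ for every $t\le t^*:=\tfrac{1}{4q}\rho^{-(\lfloor n/2\rfloor-1)}$. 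Since $\rho=\tfrac13$, we have $t^*/2\ge e^{cn}$ for $c=\tfrac14\log 3$ and all large $n$, so $\EE[\tau_{1/2}]\ge t^*/2\ge e^{cn}$.

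I expect the main obstacle to be exactly the uniform-in-time estimate on $\PP_{o'}(X_s=1)$: the absorbing chain is not reversible, and naive approaches (e.g.\ large deviations for the net number of down-steps) only forbid absorption for polynomially many time units, because the reflecting barrier at $n-1$ keeps feeding the walk back. Coupling with the reflecting --- hence reversible --- chain is what turns this into the clean stationary-measure computation above. A minor, purely bookkeeping, point: if one insists on a genuine simple digraph, one replaces the parallel edges and the self-loop by small fixed-size gadgets, which changes none of the estimates; and if one also wants $P$ irreducible, let vertex $0$ leak to vertex $1$ with probability $\rho^{2n}$ and self-loop otherwise, so that with overwhelming probability vertex $0$ never updates before time $e^{cn}$ and the argument is unaffected.
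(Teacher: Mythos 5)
Your construction is correct, but it takes a genuinely different route from the paper's.

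The paper builds a strongly connected digraph on $\{-n,\dots ,n\}$ in which the walk has drift \emph{toward both endpoints} (extra out-edges $(i,i-2)$ for $i<0$ and $(i,i+2)$ for $i>0$), sets $f_0=\mathbf 1_{\{i\ge 0\}}$, and uses the identity $\EE[f_t(v)]=\PP_v(X_t\ge 0)$ together with a one-sided drift estimate to keep $\EE[f_{t_1}(-n)]$ small and $\EE[f_{t_1}(n)]$ large up to $t_1=e^{c_0 n}$; a Markov-inequality argument at both endpoints then finishes. You instead use a \emph{single-well} geometry: an outward-biased walk with an absorbing barrier at $0$ and a reflecting wall at $n-1$, opinions $f_0=\mathbf 1_{\{v\neq 0\}}$, so $f_t(0)\equiv 0$ and $\mathrm{osc}(f_t)\ge f_t(o')$. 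The key estimate --- that $\PP_{o'}(T_0\le t)$ is small for exponentially long --- is handled cleanly by coupling the absorbing chain with the reflecting birth--death chain and invoking reversibility to get the uniform-in-time bound $\PP_{o'}(\tilde X_s=1)\le\tilde\pi(1)/\tilde\pi(o')=\rho^{o'-1}$. This is arguably tidier than the paper's drift computation (you get an explicit exponent), at the mild cost that your $P$ is not the transition matrix of SRW on a simple digraph (parallel edges, or a weighted leak at $0$ to restore irreducibility), whereas the paper's example is. Since Claim~\ref{claim:exp} as stated only asks for a ``network'' (stochastic matrix) with bounded degree, that discrepancy is cosmetic, and you flag it and sketch the standard fixes. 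The arithmetic steps check out: the flow identity $\tfrac{d}{dt}\PP_{o'}(X_t=0)=q\,\PP_{o'}(X_t=1)$, the coupling inclusion $\{X_s=1\}\subseteq\{\tilde X_s=1\}$, and the deduction $\PP(f_t(o')>\tfrac12)\ge\tfrac12$ from $\EE[f_t(o')]\ge\tfrac34$ and $f_t(o')\in[0,1]$ are all valid.
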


\begin{proof}
We let $P$ be the transition matrix of a simple random walk on the directed graph $G=(V,E)$ defined as follows. We let $V:=\{-n, -n+1,\dots ,n\}$. For $-n+2\le i <0$ the out edges of $i$ are $(i,i+1)$, $(i,i-1)$ and $(i,i-2)$. For $0<i\le n-2$ the out edges of $i$ are $(i,i-1)$, $(i,i+1)$ and $(i,i+2)$. We also add the following edges to make the graph strongly connected: $(0,-1)$, $(0,1)$,$(-n,-n+1)$, $(-n+1,-n)$, $(-n+1,-n+2)$, $(n,n-1)$, $(n-1,n)$ and $(n-1,n-2)$. The idea in here is that the random walk on $G$ will have a negative drift in the left half of the interval and a positive drift in the right half of the interval. We define the initial opinions as follows 
\begin{equation}
f_0(i):=
    \begin{cases}
     0 \quad  i< 0  \\
    1 \quad i\ge 0 
    \end{cases}.
\end{equation}
Using the same arguments as in the proof of Claim~\ref{claim:lower bound spectral} we have that $\mathbb E [f_{t}(-n)]\le \mathbb P (X(t)\ge 0)$, where $X(t)$ is the simple random walk on $G$ starting from $-n$. Thus, using that the drift of $X(t)$ is negative when $X(t)<0$ we obtain that $\mathbb E [f_{t_1}(-n)] \le 1/16$, where $t_1:=e^{c_0n}$ and where $c_0>0$ is sufficiently small. By Markov's inequality we have $\mathbb P \big( f_{t_1}(-n) \ge 1/4 \big) \le 1/4$. Finally, by symmetry we have $\mathbb P \big( f_{t_1}(n) \le 3/4 \big) \le 1/4$ and therefore 
\begin{equation}
    \mathbb P (\tau _{1/2} \le t_1 ) \le \mathbb P \big( |f_{t_1}(-n)-f_{t_1}(n)|\le 1/2  \big) \le 1/2.
\end{equation}
This finishes the roof of the claim.
\end{proof}

\subsection{A lower bound for the consensus time with i.i.d.\ initial opinions}
Theorem~\ref{thm:polylog} states that if the initial opinions are bounded i.i.d.\ random variables then the $\eps$-consensus time grows at most polylogarithmically in $n$ and polynomially in $1/\eps $. In the following claim we consider the special case in which $P$ is the transition matrix of a simple random walk on the cycle graph and the initial opinions are normal. In this case, we give a lower bound that is polylogarithmic in $n$ and polynomial in $1/\eps $. This shows that Theorem~\ref{thm:polylog} (or Theorem~\ref{cor:polylog}) cannot be improved in general. 

\begin{claim}\label{claim:lower bound}
Let $n\ge 1$ sufficiently large and let $P$ be the transition matrix of a simple random walk on the cycle graph of length $n$. Suppose that initially $f_0(v)$ are i.i.d.\ standard normal random variables. Then, there exists a universal constant $c>0$ such that, for any $n^{-1/9} \le \eps <1/2$, we have 
\begin{equation}
    \mathbb E [\tau _{\eps }] \ge c\eps ^{-4} \log ^2 n.
\end{equation}
 \end{claim}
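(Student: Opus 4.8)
\emph{Proof plan.} Fix $t_1 := c\,\eps^{-4}\log^2 n$ for a small absolute constant $c>0$ to be chosen at the end. Since $\mathrm{osc}(f_t):=\max_v f_t(v)-\min_v f_t(v)$ is non-increasing in $t$, one has $\{\mathrm{osc}(f_{t_1})>\eps\}\subseteq\{\tau_\eps>t_1\}$, so it is enough to bound $\mathbb P(\mathrm{osc}(f_{t_1})>\eps)$ away from $0$, whence $\mathbb E[\tau_\eps]\ge t_1\,\mathbb P(\tau_\eps>t_1)\ge c'\eps^{-4}\log^2 n$. The hypothesis $\eps\ge n^{-1/9}$ enters only through the resulting bound $t_1\le n^{4/9}\log^2 n$, which is $o(\sqrt n)$; this keeps us far below the mixing time of the cycle and lets us place a positive power of $n$ many well-separated vertices on it. The starting point is the fragmentation--DeGroot identity \eqref{eq:degroot fragmentation}: $f_{t}(o)=\sum_{v}m_{t}(o,v)f_0(v)$, with the fragmentation independent of the i.i.d.\ standard Gaussians $\{f_0(v)\}$. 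Hence, conditionally on the $\sigma$-algebra $\mathcal F$ of clock rings, $(f_{t}(o))_{o}$ is a centered Gaussian vector with $\mathrm{Var}(f_{t}(o)\mid\mathcal F)=\sigma_o^2(t):=\sum_v m_t(o,v)^2$, and $f_{t}(o)$, $f_{t}(o')$ are conditionally independent whenever the arcs $\mathrm{supp}\,m_{t}(o,\cdot)$, $\mathrm{supp}\,m_{t}(o',\cdot)$ are disjoint. So it suffices to exhibit, with $\mathcal F$-probability close to $1$, a family of $\asymp n/t_1$ vertices with pairwise disjoint fragmentation supports and $\sigma_{\cdot}^2(t_1)\gtrsim t_1^{-1/2}$; conditionally, $\mathrm{osc}(f_{t_1})$ then dominates the range of that many independent centered Gaussians of variance $\gtrsim t_1^{-1/2}$.

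Two facts about the fragmentation on the cycle at time $t=t_1$ drive this. First, $\mathrm{supp}\,m_t(o,\cdot)$ is an arc $[o-R_-,\,o+R_+]$ whose right (resp.\ left) endpoint advances only when the clock at the current endpoint rings, hence at rate at most $1$; so $R_+,R_-$ are each stochastically dominated by a $\mathrm{Poisson}(t)$ variable and $|\mathrm{supp}\,m_t(o,\cdot)|\le 4t+1$ with probability $\ge 1-2e^{-ct}$. Second, $\sigma_o^2(t)\ge c_0 t^{-1/2}$ with probability at least $0.98$: if $X(t)$ denotes the continuous-time simple random walk run with these clocks, so that $X(t)\mid\mathcal F\sim m_t(o,\cdot)$, and $B$ is the arc of radius $K\sqrt t$ about $o$ for a large absolute constant $K$, then $\mathbb E\big[\mathbb P(X(t)\notin B\mid\mathcal F)\big]=\mathbb P(X(t)\notin B)\le 2e^{-K^2/4}+e^{-ct}\le 1/100$, so by Markov's inequality $\mathbb P(X(t)\in B\mid\mathcal F)\ge 1/2$ off an event of probability $\le 0.02$, and there Cauchy--Schwarz gives $\sigma_o^2(t)\ge\big(\sum_{v\in B}m_t(o,v)\big)^2/|B|\ge \tfrac14/\big((2K+1)\sqrt t\big)=:c_0 t^{-1/2}$. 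Crucially $K$ is a fixed constant: a union bound over all $\asymp n/t_1$ vertices would instead force $K\asymp\sqrt{\log n}$ and cost a factor $\log n$ in the final answer.

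Now place $m:=\lfloor n/(5t_1)\rfloor$ equally spaced vertices $v_1,\dots,v_m$; since $t_1=o(\sqrt n)$, $m$ is at least a fixed positive power of $n$, so $m\to\infty$ and $\log m=\Theta(\log n)$. By the first fact and a union bound, with probability $\ge 1-n^{-10}$ every support lies in an arc of length $<5t_1$ about the corresponding $v_i$ and the supports are therefore pairwise disjoint; by the second fact and Markov's inequality, with probability $\ge 0.8$ at least $0.9m$ of the $v_i$ satisfy $\sigma_{v_i}^2(t_1)\ge c_0 t_1^{-1/2}$. On the intersection $\mathcal G$ of these events, $\mathbb P(\mathcal G)\ge 0.75$, and there are $m'\ge 0.9m$ indices for which the $f_{t_1}(v_i)$ are, conditionally on $\mathcal F$, independent centered Gaussians of variance $\ge c_0 t_1^{-1/2}$. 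By the usual lower tail for the maximum and upper tail for the minimum of $m'$ i.i.d.\ normals, with conditional probability $\ge 1-o(1)$ one has simultaneously $\max_i f_{t_1}(v_i)\ge\sqrt{c_0}\,t_1^{-1/4}\sqrt{\log m'}$ and $\min_i f_{t_1}(v_i)\le-\sqrt{c_0}\,t_1^{-1/4}\sqrt{\log m'}$, so $\mathrm{osc}(f_{t_1})\ge 2\sqrt{c_0}\,t_1^{-1/4}\sqrt{\log m'}\ge\sqrt{2c_0}\,t_1^{-1/4}\sqrt{\log n}$. Choosing $c<4c_0^2$ makes the right-hand side exceed $\eps$, and then $\mathbb P(\mathrm{osc}(f_{t_1})>\eps)\ge\tfrac12\mathbb P(\mathcal G)\ge\tfrac38$ for all $n$ large, which is the desired conclusion.

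The crux is the second fact of the middle paragraph: that the $\ell^2$-mass of the fragmentation measure on the cycle stays of order $t^{-1/2}$ with probability close to $1$ --- not merely in expectation --- which is what makes each $f_{t_1}(v_i)$ genuinely spread on the scale $t_1^{-1/4}$. I obtain it by applying Markov's inequality to the \emph{conditional} escape probability of the associated random walk rather than union-bounding over vertices, thereby keeping $K$ a constant. Controlling the support growth is routine, and the remainder --- the calibration $t_1\asymp\eps^{-4}\log^2 n$ and the extra $\log n$ produced by maximizing over $\asymp n/t_1$ decoupled vertices --- is bookkeeping.
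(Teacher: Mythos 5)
Your proposal is correct and follows essentially the same route as the paper: both condition on the clock rings so that $f_{t_1}(v)$ becomes a centered Gaussian with variance $\sum_u m_{t_1}(v,u)^2$, use Cauchy--Schwarz over a ball of radius $\asymp\sqrt{t_1}$ to lower-bound that variance by $c_0t_1^{-1/2}$ with probability bounded away from zero, and exploit the locality (the support of the fragmentation started at $v$ stays within an arc of length $O(t_1)$ with exponentially high probability) to decouple many widely separated vertices. The only cosmetic difference is in the amplification step: the paper proves the single-vertex tail bound $\mathbb P(f_{t_1}(v)\ge\eps)\ge e^{-C\eps^2\sqrt{t_1}}\ge n^{-1/3}$ (Lemma~\ref{lem:prob12}) and then applies a union bound over $\approx\sqrt n$ independent events $\mathcal C_{v_i}$, whereas you keep the conditioning on $\mathcal F$ explicit, isolate a $0.9$-fraction of vertices with good conditional variance, and invoke extreme-value statistics for independent Gaussians.
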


For the proof of the claim we will need the following lemma.

\begin{lem}\label{lem:prob12}
For any vertex $v$ in the cycle and for all $t\le \sqrt{n}$ and $\eps \ge t^{-1/4} $ we have that 
\begin{equation}
    \mathbb P (f_t(v)\ge \eps ) \ge e^{-C\eps ^2 \sqrt{t}}
\end{equation}
\end{lem}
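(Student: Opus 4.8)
The plan is to condition on the Poisson clocks and exploit that the initial opinions are Gaussian, which reduces the statement to a lower bound on the ``inverse participation ratio'' of the fragmentation process. By vertex‑transitivity of the cycle we may take $v=o$. By \eqref{eq:degroot fragmentation}, $f_t(o)=\sum_{u\in V}m_t^t(o,u)f_0(u)$, where the fragmentation process $\{m_t^t(o,\cdot)\}$ is built from the time‑reversed clocks on $[0,t]$; it is independent of $\{f_0(u)\}$, and since reversing a rate‑$1$ Poisson process on $[0,t]$ again yields such a process, it has the same law as the forward fragmentation $\{m_t(o,\cdot)\}$. Because $\sum_u m_t^t(o,u)=1$ and $m_t^t(o,u)\ge 0$, conditionally on the clocks $f_t(o)$ is a fixed convex combination of i.i.d.\ standard normals, hence centered Gaussian with variance $\sum_u (m_t^t(o,u))^2$. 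Setting $\sigma_t^2:=\sum_{u\in V}m_t(o,u)^2\in(0,1]$ and writing $\bar\Phi(x):=\mathbb P(Z\ge x)$ with $Z\sim N(0,1)$, the equality in law gives
\[
\mathbb P\big(f_t(o)\ge\eps\big)=\mathbb E\big[\bar\Phi(\eps/\sigma_t)\big].
\]

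The crux is a one‑sided bound on $\sigma_t$: there is a universal $c_1>0$ such that for all sufficiently large $n$ and all $1\le t\le\sqrt n$,
\[
\mathbb P\Big(\sigma_t^2\ \ge\ \tfrac{1}{c_1\sqrt t}\Big)\ \ge\ \tfrac34 .
\]
To prove this, let $X(\cdot)$ be the continuous‑time simple random walk on the $n$‑cycle started at $o$, so that $\mathbb E[m_t(o,u)]=\mathbb P(X(t)=u)$ for every $u$. Since the cycle distance $\mathrm{dist}(o,X(t))$ is at most $|\tilde X(t)|$ for the lifted walk $\tilde X$ on $\mathbb Z$, whose time‑$t$ displacement has mean $0$ and variance $t$, Chebyshev's inequality gives $\mathbb P(\mathrm{dist}(o,X(t))>R\sqrt t)\le R^{-2}$ for every $R>0$; fix $R$ with $R^{-2}\le\tfrac19$. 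Then $\mathbb E\big[\sum_{\mathrm{dist}(o,u)>R\sqrt t}m_t(o,u)\big]=\mathbb P(\mathrm{dist}(o,X(t))>R\sqrt t)\le\tfrac19$, so by Markov's inequality, with probability at least $\tfrac79>\tfrac34$ one has $\sum_{\mathrm{dist}(o,u)\le R\sqrt t}m_t(o,u)\ge\tfrac12$. On that event, since at most $2R\sqrt t+1\le (2R+1)\sqrt t$ vertices lie within distance $R\sqrt t$ of $o$ (the hypothesis $t\le\sqrt n$ with $n$ large makes this number less than $n$), Cauchy--Schwarz gives $\big(\tfrac12\big)^2\le (2R+1)\sqrt t\cdot\sigma_t^2$, i.e.\ $\sigma_t^2\ge\tfrac1{4(2R+1)\sqrt t}$, which is the claim with $c_1=4(2R+1)$.

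Finally, I would combine the two steps. On the event $\{\sigma_t^2\ge 1/(c_1\sqrt t)\}$ we have $\eps/\sigma_t\le \eps\sqrt{c_1}\,t^{1/4}$, so by monotonicity of $\bar\Phi$ and the elementary lower bound $\bar\Phi(x)\ge c_0\,e^{-x^2}$ (valid for all $x\ge 0$ with a universal $c_0\in(0,1)$, since $x\mapsto \bar\Phi(x)e^{x^2}$ is continuous, positive, and tends to $\infty$),
\[
\mathbb P\big(f_t(o)\ge\eps\big)\ \ge\ \tfrac34\,\bar\Phi\big(\eps\sqrt{c_1}\,t^{1/4}\big)\ \ge\ \tfrac{3c_0}{4}\,e^{-c_1\eps^2\sqrt t}.
\]
Since $\eps\ge t^{-1/4}$ forces $\eps^2\sqrt t\ge 1$, the constant prefactor is absorbed into the exponent, giving $\mathbb P(f_t(o)\ge\eps)\ge e^{-C\eps^2\sqrt t}$ with $C:=c_1+\log\!\big(4/(3c_0)\big)$. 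The main obstacle is the middle step: one needs the lower bound on $\sigma_t^2$ to hold with probability \emph{bounded away from zero}; deducing it merely from $\mathbb E[\sigma_t^2]\asymp t^{-1/2}$ via a reverse Markov inequality would only give probability of order $t^{-1/2}$, which is too weak to survive multiplication by the Gaussian tail. Localizing the fragmentation mass on the $O(\sqrt t)$ vertices reachable by time $t$ — which is exactly where the hypothesis $t\le\sqrt n$ is used — is what delivers the needed constant‑probability bound.
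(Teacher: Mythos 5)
Your proof is correct and follows essentially the same route as the paper's: condition on the clocks so that $f_t(v)$ is centered Gaussian with variance $\sum_u m_t(u)^2$, localize the fragmentation mass to a ball of radius $O(\sqrt t)$ with constant probability, apply Cauchy--Schwarz to conclude $\sum_u m_t(u)^2\gtrsim t^{-1/2}$ on that event, and then use the Gaussian tail together with $\eps\ge t^{-1/4}$ to absorb constants into the exponent. The only differences are cosmetic (you make the time-reversal identification of $m_t^t$ with $m_t$ and the Chebyshev estimate explicit, and work with probability $\ge 3/4$ where the paper uses $\ge 1/2$), so the two arguments are the same in substance.
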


\begin{proof}
Let $m_t$ be the fragmentation process originating at $v$. Recall that $\mathbb E [m_t(u)]$ is the probability that a simple continuous time random walk starting from $v$ will reach $u$ at time $t$. Thus, there exists $C_2>0$ such that 
\begin{equation}
    \sum _{d(u,v)\le C_2\sqrt{t}} \mathbb E [m_t(u)]\ge \frac{3}{4},
\end{equation}
where the sum is over vertices $u$ in the cycle that are at graph distance at most $C_2\sqrt{t}$ from $v$. Since the total mass is $1$ it follows that 
\begin{equation}\label{eq:sum of mass}
    \mathbb P \bigg( \sum _{d(u,v)\le C_2\sqrt{t}} m_t(u) \ge \frac{1}{2} \bigg)\ge \frac{1}{2}.
\end{equation}
    On the event in the left hand side of \eqref{eq:sum of mass}, we have by Cauchy Schwarz inequality   
    \begin{equation}
        \frac{1}{4} \le \bigg( \sum _{d(u,v)\le C_2\sqrt{t}} m_t(u) \bigg)^2 \le 2C_2\sqrt{t} \sum _{d(u,v)\le C_2\sqrt{t}} m_t(u)^2 \le
        2C_2\sqrt{t} \sum _{u\in V}m_t(u)^2.
    \end{equation}
We obtain that there exists $C_3>0$ such that the event 
\begin{equation}
    \mathcal A := \bigg\{ \sum _{u\in V }m_t(u)^2 \ge \frac{1}{C_3\sqrt{t}} \bigg\} 
\end{equation}
holds with probability at least $1/2$.

Next, note that conditioning on the fragmentation process we have that
\begin{equation}
    f_t(v)=\sum _{u\in V} m_t(u)f_0(u) \sim N \Big( 0\ ,\  \sum m_t(u)^2 \Big).
\end{equation}
Thus, letting $\mathcal F$ be the sigma algebra generated by the fragmentation process and using the fact that the tail $\mathbb P (X>x)$ for $X\sim N(0,\sigma ^2)$ increases with $\sigma $ we obtain that on $\mathcal A $
\begin{equation}
    \mathbb P \big( f_t(v)\ge \eps \ | \ \mathcal F \big) \ge e^{-C\eps ^2 \sqrt{t}}
\end{equation}
This finishes the proof of the lemma using that $\mathbb P (\mathcal A )\ge 1/2$.
\end{proof}

We can now prove Claim~\ref{claim:lower bound}.

\begin{proof}[Proof of Claim~\ref{claim:lower bound}]
Recall the definition of the fragmentation process $m_s^t(v,u)$ given in the discussion before equation~\eqref{eq:m_s^t}. This process uses the clocks on the vertices in the DeGroot dynamics going backward in time from $t$ to $0$. Moreover, recall that in this coupling we have the identity $f_t(v)=\sum _u m_t^t (v,u)f_0(u)$.

Let $t_1:=\delta \eps ^{-4} \log ^2 n$ where $\delta >0$ is sufficiently small (independently of $\eps $ and $n$) and will be determined later. For a vertex $v$ in the cycle consider the event 
\begin{equation}
    \mathcal B _v :=\Big\{ \text{For all } t\le t_1 \text{ and any }u\in V \text{ with } d(u,v)\ge 2t_1 \text{ we have } m_{t}^{t_1}(v,u) =0 \Big\}.
\end{equation}
Note that the event $\mathcal B _v$ depends only on the Poisson clocks of the vertices at distance at most $2t_1$ from $v$. Moreover, the compliment of the event $\mathcal B_v$ occurs only if there is a decreasing sequence of rings on $\lfloor 2t_1 \rfloor $ consecutive vertices along the cycle starting from $v$. It follows that $\mathbb P (\mathcal B _v )\ge 1-Ce^{-ct_1}$. 

Define $\mathcal C _v:=\mathcal B _v\cap \{f_{t_1}(v)\ge \eps \}$ and note that by Lemma~\ref{lem:prob12} we have 
\begin{equation}\label{eq:prob13}
    \mathbb P (\mathcal C _v) \ge e^{-C\eps ^2 \sqrt{t_1}} \ge e^{-C\delta \log n}\ge n^{-1/3},
\end{equation}
where the last inequality holds for as long as $\delta $ is sufficiently small. An important observation is that the event $\mathcal C_v$ depends only on the initial opinions and the Poisson clocks of the vertices that are at distance at most $2t_1$ from $v$. 

Let $k:=\lfloor \sqrt{n} \rfloor -1$ and let $v_1,\dots v_k$  be a sequence of (roughly equally spaced) vertices along the cycle such that $d(v_i,v_j)\ge \sqrt{n}$ for any $i$ and $j$. Using that $\eps \ge n^{-1/9}$ we get that $4t_1 \le \sqrt{n}$ and therefore $\mathcal C _{v_i}$ are mutually independent. Thus by \eqref{eq:prob13} we have that 
\begin{equation}
  \mathbb P \big( \exists  v, \ f_{t_1}(v)\ge \eps \big) \ge \mathbb P \big( \exists i \le k , \ \mathcal C _{v_i} \text{ holds} \big) \ge 1-\big( 1-n^{-1/3} \big)^k \ge   3/4.  
\end{equation}
By symmetry we have $\mathbb P \big( \exists  v, \ f_{t_1}(v) \le -\eps  \big)\ge 3/4$ and therefore $\mathbb P (\tau _{\eps } \ge t_1 ) \ge 1/2 $. This finishes the proof of the claim.
\end{proof}

\begin{claim}\label{claim:n^2}
There exists a graph of size $n$ (of large maximal degree) such that starting from i.i.d.\ uniform in  $\{-1,1\}$ initial opinions,
\[
\mathbb E[\tau_{1/4}]=\Omega(n^2).
\]
\end{claim}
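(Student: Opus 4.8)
The plan is to exhibit a graph with a vertex of degree $\Theta(n)$ on which, with probability bounded away from $0$ over the i.i.d.\ opinions, two vertices keep opinions that differ by more than $1/4$ up to a time of order $n^2$; this gives $\mathbb E[\tau_{1/4}]=\Omega(n^2)$ at once. Take $G$ to consist of two stars $K_{1,m}$ with centres $a$ and $b$ and $m=\Theta(n)$ leaves, whose centres are joined by a path $a=q_0-q_1-\cdots-q_\ell=b$ of length $\ell=\varepsilon_0 n$ for a small absolute constant $\varepsilon_0$. Then $|E|=\Theta(n)$ and $\operatorname{diam}(G)=\Theta(n)$; plugging the function that is $+1$ on the $a$-star, $-1$ on the $b$-star, and interpolates linearly along the path into the variational principle \eqref{poinc} yields $\gamma\le C/n^2$, while the general bound $\gamma\ge c/(n|E|)$ (the Remarks after Theorem~\ref{rate}) yields $\gamma\ge c'/n^2$, so $\gamma=\Theta(1/n^2)$. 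Let $\phi$ be the eigenvector of $P$ for $\lambda_2=1-\gamma$, normalized so $\|\phi\|_\pi=1$. The involution $a\leftrightarrow b$ that reverses the path is an automorphism of $G$, so $\phi$ is antisymmetric; and since the two stars carry a $1-O(\varepsilon_0)$ fraction of the stationary mass, a short manipulation of the eigenvalue equation shows that, up to $O(\gamma m)=O(1/n)$ errors, $\phi$ is a positive constant of order $1$ on the whole $a$-star and its negative on the whole $b$-star. Writing $V_a$ for the vertex set of the $a$-star, one has $\pi(V_a)\,\phi(a)\ge \tfrac12-O(\varepsilon_0)$, and the path carries only an $O(\varepsilon_0)$ fraction of $\|\phi\|_\pi^2$.

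Next I would isolate a constant-probability event at the polylogarithmic time $T_0:=\lceil C_1\log^2 n\rceil$ (with $C_1$ large). Let $\mathcal A$ be the event that $f_0(a)=1$, $f_0(b)=-1$, the clocks of $a$ and $b$ do not ring before time $2$, and $|f_{T_0}(q_j)|\le 1/8$ for every path vertex outside the $O(\log^2 n)$-neighbourhoods of $a$ and $b$. The first three events have probability $\ge e^{-4}/4$ and force at least a $(1-e^{-2})$-fraction of each star's leaves to copy the centre's initial opinion before that centre first rings; since a leaf interacts only with its centre, a short argument (the one-vertex analogue of the star computation) gives, on this event, $f_{T_0}(a)\ge 0.8$, $f_{T_0}(b)\le-0.8$, together with $\max_{v\in V_a}|f_{T_0}(v)-f_{T_0}(a)|=o(1)$ and the analogue for $b$. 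The last event has probability $1-o(1)$ by the polylogarithmic concentration for i.i.d.\ opinions on the bounded-degree path $q_0\cdots q_\ell$ (Lemma~\ref{lem: concentration around mu}, or Theorem~\ref{cor:polylog}), the excluded near-star segments being negligible because they carry $\pi$-mass $O(\log^2 n/n)$. Hence $\mathbb P(\mathcal A)\ge c_0>0$, and on $\mathcal A$, splitting $\langle f_{T_0},\phi\rangle_\pi$ into its $V_a$, $V_b$ and path contributions (using Cauchy--Schwarz for the last one) gives
\[
\langle f_{T_0},\phi\rangle_\pi\;\ge\;\pi(V_a)\,\phi(a)\,\big(f_{T_0}(a)-f_{T_0}(b)\big)-O(\sqrt{\varepsilon_0})-o(1)\;\ge\;2\delta
\]
for a fixed $\delta>1/4$, once $\varepsilon_0$ is small enough.

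To finish, use the elementary bound $\operatorname{osc}(f_t)\ge\sqrt{\vpi(f_t)}\ge|\langle f_t,\phi\rangle_\pi|$, valid because $M_t:=E_\pi(f_t)$ lies between $\min_v f_t(v)$ and $\max_v f_t(v)$, and because $\phi\perp_\pi\mathbf 1$. Restarting the dynamics at $T_0$, we have $\mathbb E[f_t(\cdot)\mid\mathcal F_{T_0}]=e^{(t-T_0)(P-I)}f_{T_0}$, and since this operator is self-adjoint for $\langle\cdot,\cdot\rangle_\pi$ with $\phi$ as an eigenvector,
\[
\mathbb E\!\left[\langle f_t,\phi\rangle_\pi\,\middle|\,\mathcal F_{T_0}\right]=e^{-\gamma(t-T_0)}\,\langle f_{T_0},\phi\rangle_\pi\;\ge\;2\delta\,e^{-\gamma cn^2}\;\ge\;\tfrac74\delta
\]
on $\mathcal A$, for all $T_0\le t\le T_0+cn^2$ once $c$ is a small absolute constant. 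Since the opinions stay in $[-1,1]$ we have $\langle f_t,\phi\rangle_\pi\le\|f_t\|_\infty\le 1$ almost surely, so the reverse Markov inequality gives $\mathbb P(\langle f_t,\phi\rangle_\pi>1/4\mid\mathcal F_{T_0})\ge\tfrac74\delta-\tfrac14>0$ on $\mathcal A$, and on that event $\operatorname{osc}(f_t)>1/4$. As $\mathcal A\in\mathcal F_{T_0}$, it follows that $\mathbb P(\tau_{1/4}>T_0+cn^2)\ge c_0(\tfrac74\delta-\tfrac14)>0$, whence $\mathbb E[\tau_{1/4}]=\Omega(n^2)$.

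The main obstacle is the middle step: one must show that the opinion profile at the polylogarithmic time $T_0$ already has a constant overlap with the slow eigenvector $\phi$. This needs (i) a quantitative version of the statement that each heavy star quickly reaches an internal quasi-consensus biased by its centre's initial opinion — the one-vertex analogue of the star analysis — and (ii) that the path part of the profile is genuinely small at time $T_0$, which is exactly where the polylogarithmic i.i.d.\ concentration on bounded-degree graphs (Lemma~\ref{lem: concentration around mu}) is used, together with the observation that the $O(\log^2 n)$-size regions near the stars — where the profile is \emph{not} small — carry negligible stationary mass and so cannot spoil $\langle f_{T_0},\phi\rangle_\pi$. A secondary point worth double-checking is the claim that $\phi$ is of order $1$ on the stars and small on the path, i.e.\ that the slowest mode really does live on the two heavy ends, which is what makes the reverse-Markov argument (rather than a delicate variance estimate) sufficient.
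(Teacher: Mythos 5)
Your construction is essentially the same as the paper's (two heavy stars joined by a path), but your argument is far more involved than necessary. The paper's proof is direct: conditioning on $f_0(v_1)=1$, $f_0(v_2)=-1$ (an event of probability $1/4$, with the remaining initial opinions mean-zero), the fragmentation identity gives
\[
\mathbb E\bigl[f_t(v_1)\mid f_0(v_1)=1,\ f_0(v_2)=-1\bigr]
=\mathbb P_{v_1}\bigl(X(t)=v_1\bigr)-\mathbb P_{v_1}\bigl(X(t)=v_2\bigr),
\]
and for $t=\delta n^2$ with $\delta$ small one has $\mathbb P_{v_1}(X(t)=v_1)\ge \pi_{v_1}$ (the standard monotonicity of the continuous-time return probability of a reversible chain), while $\mathbb P_{v_1}(X(t)=v_2)=o(1)$ because the commute time across the path is $\Theta(n^2)$. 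This gives a constant lower bound on $\mathbb E[f_t(v_1)-f_t(v_2)]$ at $t=\delta n^2$, and reverse Markov finishes the proof; no eigenvector analysis, no polylogarithmic concentration step, and no intermediate quasi-consensus at a time $T_0$ are needed. Your eigenvector route is a genuinely different strategy and buys some generality (the same scheme applies whenever $\gamma^{-1}$ is large and the slow mode has constant overlap with a few heavy vertices), but the two gaps you flag are not minor: showing $\phi$ is nearly constant on each star with the path contributing only $O(\varepsilon_0)$ of $\|\phi\|_\pi^2$ requires ruling out degeneracy of the $\lambda_2$-eigenspace and tracking the $O(\gamma m)$ errors carefully; and the claim $f_{T_0}(a)\ge 0.8$ requires controlling $a$'s opinion through all its updates up to time $T_0$, not just its first ring, which needs its own recursion. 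The one-line fragmentation computation avoids all of this, so I would recommend the simpler route.
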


\begin{proof}[Sketch of proof]
The graph is defined as follows. Let $S_1$ be a star graph with $n$ leafs and center $v_1$ and $S_2$ an identical copy of $S_1$ with center $v_2$. The centers $v_1$ and $v_2$ are connected by a path of length $n$.

Let $t=\delta n^2$, where $\delta>0$ is a constant that will be determined later. We claim that  
\begin{equation}
  \mathbb E \big[ f_{t}(v_1)-f_{t}(v_2) \ | \ f_0(v_1)=1, \ f_0(v_2)=-1\big]\ge \tfrac 1 2.
\end{equation}
Indeed, by \eqref{def:frag} and 
\begin{equation}
  \mathbb E \big[ f_{t}(v_1) \ | \ f_0(v_1)=1,\ f_0(v_2)=-1\big]=\mathbb P_{v_1}(X(t)=v_1)-\mathbb P_{v_1}(X(t)=v_2)\geq \pi_{v_1}- \kappa(\delta)\ge \tfrac 1 4,
\end{equation}
where $\kappa(\delta)>0$ satisfies $\kappa (\delta )\to 0$ as $\delta \to 0$. The last inequality holds as long as $\delta $ is sufficiently small and using that $\pi_{v_1}=\frac {n+1}{3n}>\frac 1 3$.
Similarly,
\[
\mathbb E \big[ f_{t}(v_2) \ | \ f_0(v_1)=1,\ f_0(v_2) \big] \leq  - \tfrac 1 4.
\]

From the above claim, it follows that $\mathbb P(|f_t(v_1)-f_t(v_2)|>1/4)>c$, for some constant $c>0$.
\end{proof}

\subsection{Examples of non-convergence of opinions}
In this section we show that Assumptions~\ref{assumption 1/sqrt t} and~\ref{assumtion iid} in Theorem~\ref{thm: convergence infinite network} are crucial. We remark that our examples apply for the synchronous DeGroot model as well. We construct two examples: the first violates only Assumption~\ref{assumption 1/sqrt t} and the second violates only Assumption~\ref{assumtion iid}. 

In both of these examples the initial opinions $f_0$ will be bounded (in $[-1,1]$) and therefore $f_t$ as well. It follows that, for every $v\in V$, the convergence $\lim_{t\to\infty}f_t(v)$ in probability is equivalent to convergence in $L_1$ and $L_2$. Hence, it is sufficient to find examples for which there exists $v\in V$ such that $\mathbb E[f_t(v)]$ does not converge. By the definitions of the DeGroot dynamics \eqref{eq:degroot fragmentation} and the fragmentation process \eqref{def:frag}, $\mathbb E[f_t(v)|\text{initial opinions}]= \sum_{u\in V}P_{v,u}^t f_0(u)$, where $P^t$ is the $t$-fold transition matrix of a continuous time Markov chain on $V$.

\begin{claim}\label{claim:unbounded}
There exists a network with bounded i.i.d.\ initial opinions and a vertex $v$ such that the opinion of $v$ diverges in probability. 
\end{claim}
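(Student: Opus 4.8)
The plan is to build an infinite graph of unbounded degree --- so that Assumption~\ref{assumption 1/sqrt t} fails while Assumption~\ref{assumtion iid} still holds --- on which, at a suitable vertex $v$, the opinion $f_t(v)$ fails to converge in probability. After shifting and rescaling the opinions we may take them i.i.d.\ uniform on $\{-1,1\}$, so $\mu=0$, $\var(f_0)=1$, and $|f_t(v)|\le 1$; convergence in probability is then equivalent to $L^2$-convergence. It therefore suffices to find times $t_1<t_2<\cdots\to\infty$ with $\var(f_{t_n}(v))\ge c_0>0$ for all $n$ but $\mathrm{Cov}(f_{t_n}(v),f_{t_m}(v))\to 0$ as $\min(n,m)\to\infty$ with $n\ne m$. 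Indeed, if $f_t(v)\to Y$ in $L^2$ then, all means being $0$, $\EE[f_{t_n}(v)f_{t_m}(v)]\to\EE[Y^2]$ jointly in $n,m$, forcing $\EE[Y^2]=\lim_n\var(f_{t_n}(v))\ge c_0$ and simultaneously $\EE[Y^2]=\lim_{n\ne m}\mathrm{Cov}(f_{t_n}(v),f_{t_m}(v))=0$, a contradiction.

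The graph I would use is a ``caterpillar'': a spine $c_0\sim c_1\sim c_2\sim\cdots$ in which each spine vertex $c_n$ carries, in addition, $L_n$ pendant leaves, where $L_n\to\infty$ is chosen to grow extremely fast (recursively, each $L_n$ huge compared with $L_{n-1}$); set $v:=c_0$. Since leaving $c_n$ the walk bounces into leaves and straight back, Chapman--Kolmogorov gives $P^{2j}_{c_nc_n}\ge (P^2_{c_nc_n})^j\ge(1-O(1/L_n))^{j}\ge\tfrac12$ for all $j\le cL_n$, so $\sup_{j,u,w}\sqrt j\,P^{j}_{uw}=\infty$ and Assumption~\ref{assumption 1/sqrt t} fails for every $C_0$; meanwhile the initial opinions stay bounded and i.i.d. (Optionally one may add a forward bias along the spine --- e.g.\ doubling each edge $c_nc_{n+1}$ in a multigraph sense --- to make the spine walk transient; this only simplifies the estimates below.)

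By Definition~\ref{def: degroot} and \eqref{eq:degroot fragmentation}, $f_t(v)=\sum_u m_t^t(v,u)f_0(u)$ with the time-reversed fragmentation process $m_t^t(v,\cdot)$ independent of $\{f_0(u)\}_u$. Since $\EE[f_0(u)f_0(w)]=\mathbf{1}\{u=w\}$, this yields $\var(f_t(v))=\EE\big[\sum_u m_t^t(v,u)^2\big]$ --- which, as in Section~\ref{sec:walks}, equals $\PP(X_1(t)=X_2(t))$ for two clock-coupled copies from $v$ --- and $\mathrm{Cov}(f_s(v),f_t(v))=\EE\big[\sum_u m_s^s(v,u)\,m_t^t(v,u)\big]$. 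The crux is two estimates on where the fragmentation mass sits at a well-chosen time $t_n$, namely a time maximizing the occupation probability $P^{(t_n)}_{v\,c_n}$: \textbf{(i)} $\EE\big[(m_{t_n}^{t_n}(v,c_n))^2\big]\ge\big(P^{(t_n)}_{v\,c_n}\big)^2\ge c_0>0$ with $c_0$ independent of $n$, because the walk reaches bundle $n$ only after time $\Theta(L_{n-1})\ll L_n$ and then spends a $\Theta(1)$ fraction of a $\Theta(L_n)$-long window sitting at $c_n$; and \textbf{(ii)} $\EE\big[\sum_{u\ne c_n}(m_{t_n}^{t_n}(v,u))^2\big]\to 0$, since the remaining mass is either spread over the $\approx L_n$ leaves of $c_n$, each carrying $O(1/L_n)$, or is residual mass on bundles $\ne n$, whose total expectation is $\PP_v\big(X(t_n)\notin\{c_n\}\cup\text{leaves of }c_n\big)=o(1)$ once $L_n/L_{n-1}\to\infty$ fast enough.

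Granting (i)--(ii), write $f_{t_n}(v)=M_nf_0(c_n)+R_n$ where $M_n:=m_{t_n}^{t_n}(v,c_n)$ and $\EE[R_n^2]=\EE\big[\sum_{u\ne c_n}m_{t_n}^{t_n}(v,u)^2\big]\to 0$. As $M_n$ depends only on the clocks it is independent of $\{f_0(u)\}$, and as the $c_n$ are distinct with $\EE f_0(c_n)=0$, for $n\ne m$ we get $\mathrm{Cov}(M_nf_0(c_n),M_mf_0(c_m))=\EE[M_nM_m]\,\EE f_0(c_n)\,\EE f_0(c_m)=0$; together with the $O(\sqrt{\EE R_n^2})$ cross terms this gives $\mathrm{Cov}(f_{t_n}(v),f_{t_m}(v))\to 0$, while $\var(f_{t_n}(v))=\EE[M_n^2]+o(1)\ge c_0-o(1)$. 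By the first paragraph, $f_t(v)$ does not converge in probability, as claimed. The main obstacle is the pair of random-walk estimates (i) and (ii): one must show that simple random walk on the caterpillar is ``trapped'' near $c_n$ for a time window far longer than the time needed to reach $c_n$, and that during that window only a vanishing fraction of the fragmentation mass leaks onto the leaves of $c_n$ or onto other bundles. This dictates the recursive choice of the growth rate of $(L_n)$ and requires a few elementary but slightly delicate hitting-time and occupation-measure bounds; introducing the optional forward drift on the spine is the most economical way to keep these bounds short.
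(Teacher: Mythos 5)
Your graph is essentially the same as the paper's (an infinite spine $c_0, c_1, c_2, \ldots$ with rapidly growing pendant-leaf counts $L_n$, chosen recursively so that the walk hits $c_n$ quickly and then stays trapped near $c_n$ for a long window), but your contradiction argument and the estimates it demands are genuinely different. The paper argues entirely through the correlation with the \emph{initial} opinions at distinct spine vertices: it shows $\mathrm{Cov}(f_{t_i}(v_1), f_0(v_i)) = \EE[m_{t_i}(v_i)] = \PP(X(t_i)=v_i) \ge 1/4$ for a suitable sequence $t_i$, and then, if $f_t(v_1)$ had an $L^2$-limit $f_\infty$, the independence of $\{f_0(v_i)\}_i$ would force $\var(f_\infty) \ge \sum_i \mathrm{Cov}(f_\infty, f_0(v_i))^2 = \infty$ by Bessel, a contradiction with boundedness. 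That argument only requires the single first-moment estimate $\PP_{v_1}(X(t_i)=v_i) \ge c_0$, which is what the recursive choice of $(L_n, t_n)$ delivers. Your argument instead works with the Gram matrix of the sequence $(f_{t_n}(v))_n$ itself: you need \emph{both} $\var(f_{t_n}(v)) \ge c_0$ (your (i), which follows from the same first-moment bound via Jensen) \emph{and} the extra $\ell^2$ leakage estimate (ii), $\EE[\sum_{u\neq c_n} m_{t_n}^{t_n}(v,u)^2] \to 0$, in order to make the cross-time covariances vanish. Item (ii) is not implied by (i) and requires additional work (bounding the $\ell^2$ mass on the $L_n$ leaves of $c_n$ and on the other bundles), which you correctly flag as the ``slightly delicate'' part. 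So your route is sound but strictly more demanding: the paper's choice to pair $f_{t_i}(v_1)$ against $f_0(v_i)$ rather than against $f_{t_j}(v_1)$ buys exactly the avoidance of that $\ell^2$ estimate. Your meta-observation that on $[-1,1]$-valued opinions convergence in probability and in $L^2$ are equivalent, and your verification that the caterpillar violates Assumption~\ref{assumption 1/sqrt t} (since $\sqrt{j}\,P^j_{c_n c_n}$ is unbounded), both agree with the paper.
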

\begin{proof}
The matrix $P$ is the transition matrix of a simple random walk on a graph $G=(V,E)$ constructed as follows. The graph contains an infinite one-sided line. Each vertex $v_i$ on the line ($i\in \mathbb N$) is further connected to a set $L_i$ of $N_i=|L_i|$ leaves. 

The numbers $N_1,N_2,\ldots$ are defined recursively together with times $t_1<t_2<\cdots$ such that the following property holds. Let $X(t)$ be the continuous time random walk on $G$ originating at $v_1$. Define hitting times $T_i:=\inf\{t:X(t)=v_i\}$ and events \[\mathcal E_{i}:=\{T_i \leq t_i, \forall t\in [T_i,t_i],\ X(t)\in L_i\cup\{v_i\}\}.\] 
The following property should hold: 
\begin{equation}\label{eq:stars}
    \mathbb P(\mathcal E_i|\mathcal E_1,\ldots,\mathcal E_{i-1})\geq \exp\left( \frac{-\delta}{2^{i}}\right),
\end{equation}
where $\delta>0$ is a constant that will be fixed later.

Define $N_1:=0$ and $t_1:=0$. Suppose $N_1,\ldots N_k$ and $t_1,\ldots,t_k$ are already defined. Note that $T_{k+1}$ is already well defined. Let $t_{k+1}$ be so large that $\mathbb P(T_{k+1} \leq t_{k+1}|\mathcal E_1,\ldots,\mathcal E_k)\geq \exp(-\frac{\delta}{2^{k+2}})$, and in addition, $\mathbb P(N(t_{k+1})=k\mod 2))\geq \frac 1 3$, where $N(t)\sim \mathrm{Poisson}(t)$. Define $N_{k+1}$ so large that a random walk that starts in $v_{k+1}$ does not leave $L_{k+1}\cup\{v_{k+1}\}$ until time $t_{k+1}$ with probability at least $\exp(-\frac{\delta}{2^{k+2}})$. It follows that \eqref{eq:stars} is  satisfied, and in addition
\[
\mathbb P(N(t_i)\neq i\mod 2)\geq \tfrac 1 3.
\]

Now, let the distribution of the initial opinions be uniform in $\{-1,+1\}$ and set $\delta = 0.01$. Let $\mathcal E = \cap_{i}\mathcal E_i$. Then $\mathbb P(\mathcal E)\geq e^{-\delta}$, and 
\[
\mathbb E[m_{t_i}(v_i)]= P(X(t_i)=v_i)\geq \mathbb P(N(t_i)\neq i\mod 2, \mathcal E)\geq e^{-\delta}- 2/3 \geq 1/4 ,
\]
where $m_t$ is the fragmentation process originating at $v_1$.

Let $\mathcal F$ be the sigma-algebra generated by the clock rings. We get 
\begin{equation}\label{eq:cov}
\begin{split}
\mathrm{Cov}(&f_{t_i}(v_1),f_0(v_i))=\mathbb E \big[ \mathbb E \big[ f_{t_i}(v_1)f_0(v_i) \ | \ \mathcal F\big] \big]\\
&=
\mathbb E\bigg[ \mathbb E\bigg[ \sum_{u}m_{t_i}(u)f_0(u)f_0(v_i) \ \Big| \ \mathcal F\bigg] \bigg] = \mathbb E\bigg[ \sum_u m_{t_i}(u) \mathbb E\bigg[ f_0(u)f_0(v_i) \ \Big| \ \mathcal F\bigg] \bigg]\\
&=\mathbb E\bigg[ \sum_u m_{t_i}(u) \mathbb E\big[ f_0(u)f_0(v_i)\big] \bigg] =\mathbb E\left[m_{t_i}(v_i)\right]\geq \frac 1 4.
\end{split}
\end{equation}
If $f_t(v_1)$ converged to some random variable $f_{\infty }(v_1)$ in probability, then it would also converge in $L_2$. Therefore,  $\mathrm{Cov}(f_{\infty }(v_1),f_0(v_i))\geq 1/5$, for every $i$ large enough. Since $f_0(v_i)$ are independent, we get $\mathrm{Var}(f_{\infty}(v_1))\geq\sum_{i}\mathrm{Cov}(f_{\infty}(v_1),f_0(v_i))=\infty$. Contradiction, since $f_{\infty}(v_1)\in[-1,1]$. 
\end{proof}

The second example shows that the assumption of i.i.d.\ initial opinions is crucial.

\begin{claim}\label{claim:initial}
There are bounded initial opinions $f_0(v)$ on the line graph $\mathbb Z $ such that $\mathbb E f_t(0)$ does not converge.
\end{claim}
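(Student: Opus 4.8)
The plan is to exploit the identity recorded just above the claim: for deterministic bounded initial opinions, $\mathbb E[f_t(0)]=\sum_{u\in\mathbb Z}P^t_{0,u}f_0(u)=\mathbb E[f_0(X(t))]$, where $X(t)$ is the continuous-time simple random walk on $\mathbb Z$ started at $0$ (unit Poisson clock, $\pm1$ steps). Since $X(t)=\sum_{i\le N(t)}\xi_i$ with $N(t)\sim\mathrm{Poisson}(t)$ and $\xi_i$ i.i.d.\ uniform on $\{-1,1\}$, one has $\mathbb E X(t)=0$, $\mathrm{Var}(X(t))=t$, and, by the local central limit theorem (for $\mathbb Z$ this is exactly the content of Remark~\ref{remark:assumption}), $\sup_j\mathbb P(X(t)=j)\le C/\sqrt t$ for $t\ge 1$. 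So $X(t)$ is spread over scale $\sqrt t$, and $\mathbb E[f_0(X(t))]$ is morally the average of $f_0$ over $\{|u|\lesssim\sqrt t\}$. The idea is to choose $f_0$ whose running average oscillates between $+1$ and $-1$ as the averaging window grows.

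Concretely, fix a sequence $1=a_1<a_2<\cdots$ with $a_{k+1}/a_k\to\infty$ (for instance $a_k=2^{k^2}$) and define $f_0(u):=(-1)^k$ for $a_k\le|u|<a_{k+1}$, with $f_0(0)$ set arbitrarily. Then $f_0$ is $\{-1,1\}$-valued, hence bounded, and the annuli $A_k:=\{a_k\le|u|<a_{k+1}\}$ on which $f_0\equiv(-1)^k$ are geometrically separated. Set $t_k:=a_ka_{k+1}$, so that $\sqrt{t_k}=\sqrt{a_ka_{k+1}}$ sits far above $a_k$ and far below $a_{k+1}$.

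The main estimate is that $X(t_k)$ lands in $A_k$ with probability $1-o(1)$. For the outer part, Chebyshev gives $\mathbb P(|X(t_k)|\ge a_{k+1})\le t_k/a_{k+1}^2=a_k/a_{k+1}\to 0$. For the inner part, the density bound gives $\mathbb P(|X(t_k)|<a_k)\le 2a_k\cdot C/\sqrt{t_k}=2C\sqrt{a_k/a_{k+1}}\to 0$. Letting $\varepsilon_k$ be the sum of these two quantities, $\varepsilon_k\to 0$, and since $|f_0|\le 1$, writing $\mathbb E[f_0(X(t_k))]=(-1)^k\mathbb P(X(t_k)\in A_k)+\mathbb E[f_0(X(t_k))\mathbf 1\{X(t_k)\notin A_k\}]$ yields
\[
\bigl|\mathbb E[f_{t_k}(0)]-(-1)^k\bigr|=\bigl|\mathbb E[f_0(X(t_k))]-(-1)^k\bigr|\le 2\varepsilon_k\xrightarrow[k\to\infty]{}0 .
\]
Hence $\mathbb E[f_{t_k}(0)]\to 1$ along even $k$ and $\to-1$ along odd $k$, so $t\mapsto\mathbb E[f_t(0)]$ has no limit, which is the claim.

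I do not expect a real obstacle: the only two points needing (routine) justification are (i) that $\mathbb E[f_t(0)]=\sum_u P^t_{0,u}f_0(u)$ for the infinite-graph DeGroot process of Definition~\ref{def: degroot} with $f_0$ bounded and deterministic, which is immediate from that definition together with dominated convergence since $\sum_u P^t_{0,u}=1$; and (ii) the uniform bound $\sup_j\mathbb P(X(t)=j)\le C/\sqrt t$ for the walk on $\mathbb Z$, which is classical. Everything else is a one-line Chebyshev estimate and a union bound.
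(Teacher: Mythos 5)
Your proof is correct and follows essentially the same approach as the paper: initial opinions that alternate between two values on annuli with rapidly growing scales, observation times chosen so that the walk $X(t_k)$ is concentrated in one annulus, and a local-CLT density bound $\mathbb P(X(t)=j)\le C/\sqrt{t}$ for the inner tail together with a large-deviation (you use Chebyshev, the paper uses Chernoff) estimate for the outer tail. The only differences are cosmetic — values $\{-1,1\}$ instead of $\{0,1\}$, the scale sequence $2^{k^2}$ instead of $4^{4^k}$, and $t_k$ taken at the geometric mean $a_ka_{k+1}$ rather than at the outer boundary of the previous annulus.
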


\begin{proof}
For any $k\ge 1$ define the interval $I_k:=\big[ -4^{4^k},4^{4^k} \big] $. For a vertex $v\in I_k \setminus I_{k-1}$, if $k$ is odd we let $f_0(v):=0$ and if $k$ is even we let $f_0(v):=1$. We claim that with these initial opinions $\mathbb E [f_t(0)]$ does not converge. To this end, consider the sequence of times $t_k:=4^{4^k}$. Suppose that $k$ is odd, and let $X_t$ be the simple random walk on $\mathbb Z$. We have that 
\begin{equation}
\begin{split}
    \mathbb E [f_{t_k}(0)]\le \mathbb P (X_{t_k} \in I_{k-1} ) + \mathbb P (X_{t_k} \notin I_{k} ) &\le Ct_k^{-1/2} \cdot |I_{k-1}| +\mathbb P \big( |X_{t_k} |\ge t_k  \big)\\
    &\le C4^{-2\cdot 4^{k-1}} 4^{4^{k-1}} +Ce^{-ct_k} \le C4^{-4^{k-1}},
\end{split}
\end{equation}
where in the second inequality we used that $\mathbb P (X_t=v) \le Ct^{-1/2}$ for any $v\in \mathbb Z$ and in third inequality we used Chernoff bound.
Similarly, when $k$ is even we have that 
\begin{equation}
    \mathbb E [f_{t_k}(0)] \ge 1-C4^{-4^{k-1}}.
\end{equation}
This shows that 
\begin{equation}
    0=\liminf _{t\to \infty } \mathbb E [f_t(0)] <\limsup _{t\to \infty } \mathbb E [f_t(0)] =1 
\end{equation}
and in particular $\mathbb E [f_{t_k}(0)]$ does not converge.
\end{proof}
\section{Acknowledgements}
We would like to thank two referees who contributed many useful comments that helped improve the quality of the paper significantly. RP is supported in part by the Israel Science Foundation grant \# 2566/20.
\bibliographystyle{plain}
\bibliography{bibliography}

\end{document}